\documentclass[12pt,a4paper,twoside, reqno]{amsart}
\usepackage{amsfonts, amsthm, amsmath, amssymb}
\usepackage{mathrsfs,amsmath}
\usepackage{hyperref}
\usepackage{blindtext}
\hypersetup{colorlinks=false}

\usepackage[margin=2.6cm]{geometry}

\usepackage{helvet}

\newcommand{\shortmod}{\ensuremath{\negthickspace \negthickspace \negthickspace \pmod}}

\newcommand{\sumstar}{\sideset{}{^*}\sum}

\newcommand{\e}[2]{e\left(\frac{#1}{#2}\right)}

\usepackage{amsthm}
\newtheorem{theorem}{Theorem}
\newtheorem{lemma}{Lemma}

\begin{document}
	
	\author{ Sumit Kumar,  K. Mallesham, Suraj Panigrahy}
	
	\title{ Second moment of $\textrm{GL(3)} \times \textrm{GL(2)}$ $L$--functions}
	\address{ Suraj Panigrahy \newline { Department of Mathematics and Statistics, Indian Institute of Technology, Bombay, India; \newline  
    Email: suraj.grahy12@gmail.com  
	} }	
	
	\address{Sumit Kumar \newline {  Department of Mathematics and Statistics, Indian Institute of Technology, Bombay, India;\newline 
			Email: sumit@math.iitb.ac.in
	}}
	
	\address{ K. Mallesham  \newline { Department of Mathematics and Statistics, Indian Institute of Technology, Bombay, India; \newline  
    Email: iitm.mallesham@gmail.com
	} }
	
	\subjclass[2010]{Primary 11F66, 11M41; Secondary 11F55.}
	\date{\today}
	\keywords{Maass forms, subconvexity, Rankin-Selberg $L$-functions.}

\maketitle
    
	\begin{abstract}
    For $M_1$ and  $ M_2$  two distinct primes, let   $ H_k^\star(M_1M_2, \psi)$ denote the set of primitive newforms of level $M_1M_2$, weight $k\geq 3$ and  Nebentypus $\psi$ of conductor $M_1$.  Let  $\pi$ be a fixed $SL(3, \mathbb{Z})$ Hecke cusp form. We prove a Lindelöf--consistent upper bound for the second moment  
        \[ \mathop{ \sum_{\substack{\psi(M_1) \\ \psi(-1)=(-1)^k }}} \sideset{}{^h}\sum_{f \in H_k^{\star}(M_1M_2,\psi)} |L(1/2, \pi \times f)|^2 \ll_{\pi,\epsilon} M_1^{1+\epsilon}\] 
        in the range $M_2\leq M_1^{1+\epsilon}$.
	\end{abstract}

\section{Introduction}
Study of moments of  $L$-functions is a recurring theme  in modern  number theory.  Given a family  of $L$-functions $\{L(s, F)  \}$ associated to an  automorphic form $F \in  \mathcal{F}$, we define the $2k$-th moment at the central point $s=1/2$ 
\[M_{2k}(\mathcal{F})= \sum_{F \in \mathcal{F}}|L(1/2, F)|^{2k}.\]
These moments
are  the  natural objects to study as they illuminate
structure of the family and display beautiful symmetries in the family. A broad goal  is to get an asymptotic formula of the form
\[M_{2k}(\mathcal{F})= \mathrm{Main \ Term}+ o(\mathrm{Main \ Term}),\]
as $|\mathcal{F}| \rightarrow \infty$, where $\mathrm{Main \ Term} \asymp |\mathcal{F}|$.
Such estimates often yield  interesting arithmetic applications, for example, non-vanishing of $L$-values, subconvexity of $L$-functions etc. 
Unfortunately, there are only a handful of known asymptotic formulae in the literature. 

For GL(1) family
\begin{align}\label{zeta}
   M_{2k}(T,q):= \sum_{\chi^{\star}(q)}\int_{-T}^{T} |L(1/2+it, \chi)|^{2k} \, \mathrm{d}t,
\end{align}
asymptotic formulae are known due to Hardy-Littlewood (see \cite{T}) for $k=1,q=1$ and due to Ingham (see \cite{T})   for $k=2,q=1$, and  due to  Heath-Brown \cite{HB81} for $k=1, T=0$ and  Young \cite{Y07} for $k=2,T=0$.  There is also  some recent progress for $k=3$ and $4$ for a large family (also including the sum over the moduli $q$) by Chandee et. al. in  \cite{CLMR23}, \cite{CLMR24}.

This problem gets challenging for large $k$ and for `higher rank' family. Even a  slightly weaker  problem  of  getting  the  Lindel\"of--consistent upper bound $$M_{2k}(\mathcal{F}) \ll_\epsilon |\mathcal{F}|^{1+\epsilon},  \ |\mathcal{F}| \rightarrow \infty $$ is mostly out of reach.   
Let $\mathcal{F}= \mathcal{F}_2(q, T)  $  denote the set of    Maass cusp  forms  on  GL(2)  of  level $q$   and Laplace eigenvalue $1/4+t_f^2, $  $  T\leq t_f\leq 2T$. For this family
some  notable known results  are  due to Kowalski-Michel-Vanderkam \cite{KMV00} in the level aspect and due to Jutila-Motohashi \cite{JM06}  in the spectral aspect    for the fourth moment.
The sixth moment was first studied by Young \cite{MY09} who proved 
\begin{align}\label{6 moment}
    \sum_{f \in \mathcal{F}_2(1,T)} |L(1/2+it_f,f)|^6 \ll_\epsilon T^{2+\epsilon},
\end{align}
In the same paper  he also estimated  the second moment 
\begin{align*}
    \sum_{f \in \mathcal{F}_2(1,T)} |L(1/2+it_f,\pi \times f)|^2\ll_\epsilon T^{2+\epsilon},
\end{align*}
of  Rankin--Selberg $L$-function associated to (GL(3), GL(2)) pair of automorphic forms $(\pi,f)$. This may be thought of as the cuspidal analog of  \eqref{6 moment}. In a companion paper \cite{MY11}, Young proved 
\[\int_{-T^{1-\epsilon}}^{T^{1-\epsilon}}\sum_{f \in \mathcal{F}_2(1,T)} |L(1/2+it,\pi \times f)|^2\ll_\epsilon T^{3+\epsilon}.\]
Motivated by \cite{MY11}, Djankovi\'c \cite{D11} proved a `non-archimedian' analog of \eqref{6 moment}: for $M$ prime and $k\geq 3$ odd integer 
\[  \mathop{ \sum_{\substack{\psi(M) \\ \psi(-1)=(-1)^k }}} \sideset{}{^h}\sum_{f \in H_k^{\star}(M,\psi)} |L(1/2,  f)|^6 \ll M^{1+\epsilon},\]
where  for any integer $M$, $ H_k^\star(M, \psi)$ denote the set
of primitive (normalized such that the first coefficient is  $1$) newforms   which forms an orthogonal basis of $S_k^{\mathrm{new}}(M, \psi)$.   A key ingredient of  \cite{D11} is the asymptotic large sieve of Iwaniec-Li \cite{IL07} followed by Voronoi summation formula.  

Recently Qi \cite{Qi25} extended Young's result \cite{MY09} to the short-interval range $ T < t_f \leq T+\sqrt{T}$ and obtained Lindel\"of consistent bound in the coveted family range ($ |\mathrm{family}| \asymp \mathrm{|conductor|}^{1/4}$). Purpose of this paper to prove a `non-archimedian' analog of Qi \cite{Qi25}. 

Let $M_1$ and $M_2$ be two distinct primes such that  $M_1 <M_2$. 
We consider for $M=M_1M_2$ the second moment
\[ \mathcal{S}_\pi(M):= \mathop{ \sum_{\substack{\psi(M_1) \\ \psi(-1)=(-1)^k }}} \sideset{}{^h}\sum_{f \in H_k^{\star}(M,\psi)} |L(1/2, \pi \times f)|^2\]
\begin{theorem}\label{main thm}
   Let $M^\epsilon \leq M_2 \leq M_1^{1+\epsilon}$.  
   Then  we have 
   \[ \mathcal{S}_\pi(M) \ll_\pi M_1M^\epsilon .\]
\end{theorem}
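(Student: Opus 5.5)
We follow the strategy of Djankovi\'c \cite{D11}, adapted to the Rankin--Selberg setting as in Young \cite{MY09} and Qi \cite{Qi25}. First, an approximate functional equation writes $L(1/2,\pi\times f)$ as a smooth sum of length about the square root of the conductor. The conductor of $\pi\times f$ is $\asymp (M_1M_2)^3 k^{O(1)}$, so the sum has length $N\approx M^{3/2+\epsilon}$:
\[
L(1/2,\pi\times f)\approx \sum_{r,n}\frac{A_\pi(r,n)\lambda_f(n)}{(r^2n)^{1/2}}V\Big(\frac{r^2n}{M^{3/2}}\Big),
\]
together with a dual sum whose root number depends on $f$. Squaring and using Cauchy--Schwarz reduces the problem to bounding
\[
\sum_{\psi(M_1)}\sideset{}{^h}\sum_{f}\Big|\sum_{n\sim N}a_n\lambda_f(n)\Big|^2
\]
with $a_n=A_\pi(r,n)n^{-1/2}$ (dyadic in $n$, with $r$ small after trivial treatment). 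We then complete the newform sum to the full space $S_k(M,\psi)$, which adds oldforms from level $M_1$ with nonnegative weights. We apply the Petersson formula and sum over $\psi \bmod M_1$ with $\psi(-1)=(-1)^k$. This character average is the key point: it imposes $m\equiv \pm n \pmod{M_1}$ in the Kloosterman terms, in the manner of the Iwaniec--Li asymptotic large sieve \cite{IL07}. The diagonal $m=n$ contributes $\approx \phi(M_1)\sum_n |a_n|^2\ll M_1 M^\epsilon$ by Rankin--Selberg bounds for $\sum|A_\pi(r,n)|^2$, and this gives the main term of the claimed bound.

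For the off-diagonal, after the character sum we get
\[
\frac{M_1}{2}\sum_{c\equiv 0\,(M)}\frac{1}{c}\sum_{\substack{m,n\\ m\equiv \pm n\,(M_1)}}\overline{a_m}a_n\, S_\chi(m,n;c)\,J_{k-1}\Big(\frac{4\pi\sqrt{mn}}{c}\Big),
\]
where the Kloosterman sums are twisted by a trivial or principal character mod $M_1$. Because $k\ge 3$ is fixed, $J_{k-1}(x)\ll x^{k-1}$. Terms with $c\gg \sqrt{mn}\,M^{\epsilon}$ are therefore negligible, so only $M\mid c$ with $c\le N M^{\epsilon}\approx M^{3/2+\epsilon}$ remain. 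We detect the congruence $m\equiv\pm n \pmod{M_1}$ with additive characters mod $M_1$, open the Kloosterman sum, and apply GL(3) Voronoi summation in both $m$ and $n$. Each $n$-sum of length $N$ twisted by a character of modulus $q\mid cM_1$ dualizes to length about $q^3/N$. Since $c\ge M$ and $N\approx M^{3/2}$, the dual lengths are often shorter. The resulting character sums factor over $M_1$, $M_2$ and the rest of $c$, and can be evaluated explicitly. In the dual sums we then use the Cauchy--Schwarz and Rankin--Selberg bound $\sum_{n\le X}|A_\pi(1,n)|^2\ll X$, together with square-root cancellation in the explicit complete sums. Here the hypothesis $M_2\le M_1^{1+\epsilon}$ enters: it controls the modulus $c/M_1\ge M_2$ relative to the savings $M_1$ from the family. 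The hypothesis $M_2\ge M^\epsilon$ ensures the $c$-sum starts high enough that the transition range of $J_{k-1}$ is genuinely shortened.

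The main obstacle is the off-diagonal after double Voronoi. One must show that the transformed sum is $O(M_1M^\epsilon)$, uniformly over the range $M\le c\le M^{3/2+\epsilon}$. At the top of this range the dual lengths $(cM_1)^3/N$ are long, and a trivial bound loses. Our first attempt is to split $c$ dyadically. For small $c$ we apply Voronoi only in the shorter variable. For large $c$ we would instead exploit the congruence mod $M_1$ via Poisson summation in the $c$-variable, or a GL(3) large-sieve-type bound (Cauchy--Schwarz in $m,n$ with Rankin--Selberg on the diagonal). We would balance these two treatments at the point $M_2\approx M_1$.
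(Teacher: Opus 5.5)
Your set-up (approximate functional equation, positivity to pass to the full space, Petersson, diagonal $\ll M_1M^{\epsilon}$) agrees with the paper. There is, however, a genuine gap: the off-diagonal, which is the whole difficulty, is never actually bounded, and the mechanism you sketch for it cannot work in the main range $M_2\asymp M_1$.

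First, the $\psi$-average does not impose $m\equiv\pm n\pmod{M_1}$. Write the Petersson modulus as $Mc$. The average restricts the Kloosterman variable to $\alpha\equiv\pm1\pmod{M_1}$. For $M_1\nmid c$ this collapses the $M_1$-part of $S_\psi(m,n;Mc)$ to the single phase $e(\pm\overline{M_2c}(m+n)/M_1)$, leaving $\frac{M_1-1}{2}$ times $S(\overline{M_1}m,\overline{M_1}n;M_2c)$. A congruence between $m$ and $n$ (modulo $M_1^{b_1}$) appears only when $M_1^{b_1}\,\|\,c$ with $b_1\ge1$, and that is the easy case. The gain from the family is that this phase can be moved by reciprocity, $e(\pm\overline{M_2c}(m+n)/M_1)=e(\mp\overline{M_1}(m+n)/M_2c)\,e(\pm(m+n)/Mc)$, into a smooth factor. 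The arithmetic modulus therefore drops from $Mc$ to $M_2c$.

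Detecting a congruence mod $M_1$ by additive characters instead keeps the moduli of size at least $Mc\ge M$ (your $q\mid cM_1$). Then your claim that the dual lengths are often shorter is false, since $q^3/N\gg M^3/N\asymp N$. Even with the correct modulus $M_2c$, at the critical range $c\asymp N/M$ the dual length is $\asymp M_2^3$, which is $\asymp N$ when $M_2\asymp M_1$: Voronoi is self-dual there. So double Voronoi followed by Cauchy--Schwarz, Rankin--Selberg and square-root cancellation in the complete sums only reproduces the Weil-bound estimate. That estimate for the off-diagonal contribution to $\mathcal{S}_\pi(M)$ is about $M_1M^{3/4}M_2^{1/2}\asymp M_1M$ there, a factor $M$ above the target. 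No pointwise treatment of complete sums recovers this. The saving must come from averaging over the moduli and the residue classes, and your last paragraph only names possibilities (Poisson in $c$, ``a large-sieve-type bound'') without supplying one.

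The missing idea, which is what the paper does, is to separate $m$ and $m'$ completely and then use a large sieve over moduli.

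\emph{Separation.} In the arithmetic, use Young's identity $S(m,n;c)e(-(m+n)/c)=\sum_{ab=c}\sum_{x} e\bigl((\overline{x}m-\overline{x+a}\,n)/b\bigr)$. Applied to $S(\overline{M_1}m,\overline{M_1}m';M_2c)$ together with the phase produced by reciprocity, it writes everything as products of a function of $m$ and a function of $m'$. In the weight $W(2\sqrt{mm'}/Mc)$ the separation is done by Mellin inversion, at the cost of an integral over $|y|\le K=(1+N/CM)M^{\epsilon}$.

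\emph{Reduction to sums of squares.} Then $|AB|\le\frac12(|A|^2+|B|^2)$ bounds the off-diagonal by at most $M^{\epsilon}M_1/N$ times sums of squares
\[
\sum_{a}\sum_{d}\sumstar_{\gamma \bmod p_1d}\int_{-K}^{K}\Bigl|\sum_m \lambda_\pi(\ell,m)\,m^{-iy/2}\,e\Bigl(\frac{\gamma m}{p_1d}\Bigr)e\Bigl(\frac{m}{MM_2^{b_2}ad}\Bigr)V\Bigl(\frac{m}{N}\Bigr)\Bigr|^2\,dy,
\]
with $p_1\mid M_2^{1+b_2}$.

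\emph{Voronoi and large sieve.} GL(3) Voronoi is applied to the single sum inside the square. The dual length is $\ll (M_2C/a)^3K^3/N$, which decays in $a$, so the $a$-sum costs nothing. The resulting Kloosterman sums are turned back into additive characters by Plancherel in $\gamma$. The large sieve with level, $\ll(\text{length}+p_1D^2)\|b\|^2$ over moduli $p_1d$ with $d\sim D$, finishes the argument, together with its hybrid version absorbing the $y$-integral when $K$ is large.

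The hypothesis $M_2\le M_1^{1+\epsilon}$ is exactly what makes these large-sieve bounds, with $N\ell^2\ll M^{3/2+\epsilon}$, come out as $M_1N$. It does not arise from balancing two different treatments of $c$.
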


 The above theorem yields a Lindel\"of consistent bound in the range $ M_2 \asymp M_1$.  Our result extend \cite{D11}  in  the shorter range $ |\mathrm{family}| \asymp M_1^2M_2$. The case $M_2 \geq M_1^{1+\epsilon}$ turns out to be out of reach at the moment as the size of family gets shorter which makes this super hard to analyze from analytic point of view. A key ingredient in the proof is the large sieve inequality which we prove in Lemma \ref{hybrid large-sieve with level}.

\section{Preliminaries}
\subsection{Petersson formula}
For $k$ and $q$ two integers, $k \geq 2,$ and $\psi$ a Dirichlet character of modulus $q$ and conductor $\hat{q},$ let $S_k(q, \psi)$ denote the complex vector space of weight $k $ and holomorphic cusp forms with level $q$ and nebentypus $\psi$. Let $S_k^{\mathrm{new}}(q, \psi)$ be the space of newforms in  $S_k(q, \psi).$  We let $ H_k^\star(q, \psi)$ denote the set
of primitive (normalized such that the first coefficient is  $1$) newforms    which forms an orthogonal basis of $S_k^{\mathrm{new}}(q, \psi)$. We  extend it to $H_k(q, \psi)$--an orthogonal Hecke--eigenbasis of $S_k(q, \psi)$. 
We denote
\[ \sideset{}{^h}\sum_{f \in H_k^{}(q,\psi)} \alpha_f:=  \sideset{}{}\sum_{f \in H_k^{}(q,\psi)} w_f^{-1}\alpha_f, \ \   w_f^{-1} = \frac{\Gamma(k-1)}{(4\pi)^{k-1} \| f\|^2}. \]
 We state the  Petersson formula as follows:
\begin{align}\label{petersson}
    \sideset{}{^h}{\sum}_{f \in H_k(q, \psi)}
 \lambda_f(n)\overline{\lambda_f(m)} 
=\delta(n-m) + 2 \pi i^{-k}
\sum_{c=1}^\infty  \frac{S_\psi(m,n;qc)}{cq} J_{k-1}\left( \frac{4\pi \sqrt{nm}}{pqc}\right),
\end{align}
where  $\lambda_f(.)$ are the Hecke normalized  eigenvalues  of  $f$.

\subsection{Bessel function}
Let $k \geq 2$ and $x>0$. Then the $J$-Bessel function splits as
\[J_k(2\pi x)=W_k(x)e(x)+\overline{W_k}(x)e(-x),\]
where $W_k(x)$  is a smooth function defined on $(0, \infty)$, and it satisfies the following bound:
\[x^jW_k^{(j)}(x)\ll_{k,j}  \{x^{k-1}, \frac{x}{(1+x)^{3/2} }\}, \ j \geq 0.  \]

\subsection{Duality principle and large sieve inequality}	
\begin{lemma}[Duality principle]\label{dualitylemma}
		Let $\phi: \mathbb{Z}^2\rightarrow \mathbb{C}$. For any complex numbers $a_m$, \begin{align*}
			\sum_n\left|\sum_m a_m\phi(m,n)\right|^2\ll \left( \sum_{ m} |a_m|^2\right)\sup_{\|\beta\|_2=1}\sum_m\left|\sum_n\beta(n)\phi(m,n)\right|^2,
		\end{align*}
		where the supremum is taken over all sequences of complex numbers $\beta(n)$ such that $$\|\beta\|_2=\sqrt{\sum_n|\beta(n)|^2}=1.$$
	\end{lemma}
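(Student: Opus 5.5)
The plan is to realize the duality principle as the statement that a matrix and its adjoint have the same operator norm on $\ell^2$. First I note the reduction to the finitely supported case. We may assume $a_m$ is supported on a finite set, or more generally that $\sum_m |a_m|^2<\infty$, since otherwise there is nothing to prove. For a general $a$ we apply the finite case to the truncations $a\mathbf 1_{|m|\le N}$. Fatou's lemma then applies to the sum over $n$ as $N\to\infty$, because the inner sums converge absolutely in every situation where the right-hand side is finite.

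Set $\Delta:=\sup_{\|\beta\|_2=1}\sum_m\bigl|\sum_n\beta(n)\phi(m,n)\bigr|^2$, and let $b_n:=\sum_m a_m\phi(m,n)$. By the converse of Cauchy--Schwarz in $\ell^2$,
\[
\Bigl(\sum_n |b_n|^2\Bigr)^{1/2}=\sup_{\|\beta\|_2=1}\Bigl|\sum_n \beta(n) b_n\Bigr|.
\]
To justify this for finite $n$-truncations, choose $\beta(n)=\overline{b_n}/\|b\|$ on $|n|\le N$ and $0$ elsewhere, and then let $N\to\infty$. This truncation also keeps every sum finite, so the two sums can be interchanged.

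For any finitely supported $\beta$ with $\|\beta\|_2=1$, interchanging the finite sums gives
\[
\sum_n\beta(n)b_n=\sum_m a_m\sum_n\beta(n)\phi(m,n).
\]
Cauchy--Schwarz in $m$ then yields
\[
\Bigl|\sum_n\beta(n)b_n\Bigr|^2\le\Bigl(\sum_m|a_m|^2\Bigr)\sum_m\Bigl|\sum_n\beta(n)\phi(m,n)\Bigr|^2\le\Bigl(\sum_m|a_m|^2\Bigr)\Delta.
\]
Taking the supremum over $\beta$, and letting the truncation $N\to\infty$, we obtain $\sum_n|b_n|^2\le \Delta\sum_m|a_m|^2$. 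This is the claim with implied constant $1$.

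There is no real obstacle here. The only point needing care is the convergence bookkeeping when the index sets are infinite, and truncating both in $m$ and in $n$ before interchanging sums handles it.
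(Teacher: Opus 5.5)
Your proof is correct. It is the standard argument that a matrix and its adjoint have the same $\ell^2$ operator norm: pair $b$ against $\beta=\overline{b}/\|b\|$, interchange the sums, and apply Cauchy--Schwarz in $m$, which gives implied constant $1$. Your truncation/Fatou bookkeeping also holds, because taking $\beta=\delta_{n}$ shows $\sum_m|\phi(m,n)|^2\le\Delta$, so the inner sums converge absolutely. The paper states this lemma without proof as a standard tool, so there is no competing argument to compare.
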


    
\begin{lemma}\label{C=1}
    Let $a_m$ be any complex numbers and $D>0$ be a fixed  integer.  Then  we have 
       \begin{align}
   S(N;D):= \sum_{\gamma (D)}  \Big|\sum_{N \leq m < 2N} a_m  e\left( \frac{\gamma m}{D} \right) \Big| ^2  \leq (N +  D) \sum_m|a_m|^2.
\end{align}
\end{lemma}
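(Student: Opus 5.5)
The cleanest route is to expand the square and use orthogonality of additive characters modulo $D$. Writing $\gamma$ over a complete residue system modulo $D$, we have
\[
S(N;D)=\sum_{N\le m_1,m_2<2N} a_{m_1}\overline{a_{m_2}}\sum_{\gamma (D)} e\left(\frac{\gamma(m_1-m_2)}{D}\right)
= D\sum_{\substack{N\le m_1,m_2<2N\\ m_1\equiv m_2 \ (\mathrm{mod}\ D)}} a_{m_1}\overline{a_{m_2}} .
\]
So the bound reduces to controlling the diagonal-type sum over pairs in the same residue class. Equivalently, grouping by residue classes $r \bmod D$,
\[
S(N;D)= D\sum_{r (D)}\Big|\sum_{\substack{N\le m<2N\\ m\equiv r\,(D)}} a_m\Big|^2 .
\]

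Next, for each class $r$ I apply Cauchy--Schwarz to the inner sum. The number of $m\in[N,2N)$ with $m\equiv r \pmod D$ is at most $N/D+1$. This gives
\[
S(N;D)\le D\sum_{r (D)}\Big(\frac{N}{D}+1\Big)\sum_{\substack{m\equiv r\,(D)}}|a_m|^2=(N+D)\sum_m |a_m|^2 ,
\]
since the classes partition the range of $m$. This is exactly the claimed inequality, with constant $1$.

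There is no real obstacle here. The only point needing care is the count of integers in an interval of length $N$ lying in a fixed class modulo $D$. That count is at most $\lfloor N/D\rfloor+1\le N/D+1$, which holds because $[N,2N)$ is half-open of length $N$. Alternatively, one could cite the standard additive large sieve with well-spaced points $\gamma/D$, which have spacing $\delta=1/D$. That gives $(N+D-1)$ and hence the same bound. I would present the elementary argument above, since it is self-contained and yields the constant $1$ directly.
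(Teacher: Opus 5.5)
Your proof is correct and follows essentially the paper's argument. Both expand the square, use orthogonality of additive characters to reduce to $D$ times the sum over pairs $m_1\equiv m_2 \pmod D$, and then use the fact that each residue class meets $[N,2N)$ in at most $N/D+1$ integers. Your grouping by residue classes followed by Cauchy--Schwarz simply makes explicit the final step that the paper leaves implicit.
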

\begin{proof}
 We open the absolute valued square to arrive at 
\begin{align*}
   S(N;D) &\leq  \sum_{m_1}  \sum_{m_2} a_{m_1}\overline{a}_{m_2} \sum_{\gamma (D)}  e\left(\frac{ (m_1-m_2) \gamma }{D} \right) \\
   &\leq  D  \mathop{\sum \sum }_{m_1=m_2 (D)}   |a_{m_1}\overline{a}_{m_2}| \\
  &\leq (D+N) \sum_m |a_{m}|^2.
\end{align*} 
\end{proof}
\begin{lemma}\label{large sieve with level}
    Let $a_m$ be any complex numbers and $D>0$ be a fixed  integer. Let $1 \leq M\leq N$. Then  we have 
    \begin{align}\label{small N}
   S(C,N,M;D):= \mathop{\sum_{\substack{c \sim C \\ (c,D)=1}}}\sumstar_{\gamma (cD)}  \Big|\sum_{N \leq m < N+M} a_m  e\left( \frac{\gamma m}{cD} \right) \Big| ^2  \ll  C^\epsilon(M +  DC^2) \sum_m|a_m|^2.
\end{align}
\end{lemma}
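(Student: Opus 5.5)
This is the classical additive large sieve, adapted to the moduli $cD$ with $(c,D)=1$ and $c\sim C$. The approach is to check that the relevant fractions are well spaced and then apply the Gallagher/Montgomery--Vaughan form of the large sieve for well-spaced points.

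\textbf{Step 1: the points.} Consider the set of points
\[
x_{c,\gamma}=\gamma/(cD)\in\mathbb{R}/\mathbb{Z}, \qquad c\sim C,\ (c,D)=1,\ \gamma \bmod cD \text{ with } (\gamma,cD)=1 .
\]
Each such fraction is reduced, with exact denominator $cD$. Hence distinct pairs $(c,\gamma)$ give distinct points in $\mathbb{R}/\mathbb{Z}$.

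\textbf{Step 2: the spacing.} Take two distinct points $\gamma_1/(c_1D)$ and $\gamma_2/(c_2D)$. Their difference is
\[
\frac{\gamma_1c_2-\gamma_2c_1}{c_1c_2D}.
\]
The numerator is a nonzero integer modulo $c_1c_2D$, so $\|x-x'\|\geq 1/(c_1c_2D)\gg 1/(DC^2)$. The spacing is in fact better when we keep track of the common factor $D$, but this crude bound is all we need.

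\textbf{Step 3: the large sieve.} The general large sieve inequality for $\delta$-spaced points reads
\[
\sum_r \Big|\sum_{N\le m<N+M} a_m e(mx_r)\Big|^2\le (M+\delta^{-1})\sum_m|a_m|^2 .
\]
With $\delta^{-1}\ll DC^2$ this gives $(M+DC^2)\sum_m|a_m|^2$, which is the claimed bound even without the factor $C^\epsilon$. The inequality is translation invariant in $m$, so shifting the range $[N,N+M)$ to start near $0$ costs nothing. The hypothesis $M\le N$ plays no role here.

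\textbf{Self-contained alternative.} If we only want to use the tools already in the paper, there is a second route. Apply the duality principle (Lemma~\ref{dualitylemma}), open the square, and count the solutions of
\[
\frac{\gamma_1}{c_1D}-\frac{\gamma_2}{c_2D}\in\mathbb{Z}+O(1/M).
\]
Summing the resulting geometric series in $m$ with a smooth majorant, one bounds the number of pairs $(c_1,\gamma_1),(c_2,\gamma_2)$ that are within distance $1/M$ of each other. Each point has $\ll 1+DC^2/M$ neighbours up to a divisor-type loss, which is where the factor $C^\epsilon$ would come from.

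The only genuinely delicate point is the spacing in Step 2, namely verifying that the fractions are distinct and reduced. This holds because $(\gamma,cD)=1$ in the starred sum. I expect no real obstacle beyond this.
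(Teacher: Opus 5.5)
Your proof is correct, but it takes a different route from the paper's. You invoke the sharp large sieve for $\delta$-spaced points (Selberg/Montgomery--Vaughan) and supply the one arithmetic fact it needs. The fractions $\gamma/(cD)$ with $(\gamma,cD)=1$ are reduced, hence pairwise distinct. Two distinct ones differ by the non-integer $(\gamma_1c_2-\gamma_2c_1)/(c_1c_2D)$, so $\delta^{-1}\le 4DC^2$, and the lemma follows even without the factor $C^\epsilon$.

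The paper instead proves a smoothed large sieve by hand. After duality it majorises the dual $m$-sum by a smooth weight of length $MQ$, with $Q=C^{5\epsilon}+C^{2+\epsilon}D/M$, opens the square and applies Poisson summation. Since $MQ\ge C^{2+\epsilon}D$ exceeds the inverse spacing $\ll DC^2$ by a factor $C^{\epsilon}$, every nonzero dual frequency is negligible. The zero frequency forces $\gamma c'\equiv\gamma'c \pmod{Dcc'}$, hence $c=c'$ and $\gamma\equiv\gamma' \pmod{cD}$, leaving only the diagonal $MQ\sum|\beta|^2$.

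So both arguments rest on the same spacing fact. The paper's version is self-contained, using only duality and Poisson, but pays $C^\epsilon$; this loss is an artefact of lengthening the sum so that the Fourier transform of the weight is negligible. Yours is shorter and sharper, at the cost of importing the classical theorem.

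Your guess that in the self-contained route the $C^\epsilon$ would come from a divisor-type loss is off. The number of neighbours within $1/M$ is $\ll 1+DC^2/M$ directly from the spacing, so that route loses nothing. The paper's $C^\epsilon$ comes from the Poisson truncation instead.
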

\begin{proof}
By a change of variable, we see that 
\begin{align*}
    S(C,N,M;D)= \mathop{\sum_{\substack{c \sim C \\ (c,D)=1}}}\sumstar_{\gamma (cD)}  \Big|\sum_{1 \leq m < M+1} a({N-1+m})  e\left( \frac{\gamma m}{cD} \right) \Big| ^2.
\end{align*}

We  apply the duality principle (see Lemma \ref{dualitylemma}) to analyse $S(C,N,M;D)$. Indeed, we have 
     \begin{align*}
      S(C,N,M;D) \leq 
             \sum_m |a_{m}|^2\sup_{\|\beta\|_2=1} \sum_{1 \leq m < M+1} \Big|    \mathop{\sum_{ \substack{c \sim {C} \\ (c,D)=1 }}} \ \sumstar_{\substack{ \gamma \shortmod{cD } }  }  \beta(\gamma,d)  e\left(\frac{ m \gamma }{cD} \right)   \Big|^2.
\end{align*}
   Next we extend the sum over $m$ to $ -2MQ \leq m \leq 2MQ $, where $Q>2$ is  a parameter  such that $$Q=C^{5\epsilon}+\frac{C^{2+\epsilon}D}{M}. $$ 
   To this end, we introduce 
 a smooth bump function $w(x)$ such that $0\leq w(x)\leq 1$, $w(x)=1$ for $x \in [-1,1] $ and $w(x)=0$, for $x \notin (-2,2)$, and   $x^j w^{(j)}(x) \ll_{j} 1$. Thus we have 
 \begin{align*}
     S(c,N;D)
       &\leq   \sum_m |a_{m}|^2   \sup_{\|\beta\|_2=1} \sum_{m}w\left( \frac{m}{MQ} \right) \Big|  \mathop{\sum_{ \substack{c \sim {C} \\ (c,D)=1 }}} \ \sumstar_{\substack{ \gamma \shortmod{cD } }  }   \beta(\gamma,c)  e\left(\frac{ m \gamma }{cD} \right)   \Big|^2.
\end{align*}
Opening the absolute valued square, we arrive at
\begin{align*}
      \mathop{\sumstar_{ \substack{c, c^\prime \sim  {C} }}} \ \sumstar_{\substack{ \gamma \shortmod{cD}}  } \,\sumstar_{\substack{ \gamma^\prime \shortmod{c^\prime D}}  }   \beta(\cdots) \overline{\beta}(\cdots)  
      \sum_m w(m/MQ) e\left(\frac{ m(\gamma c^\prime-\gamma^\prime c ) }{Dcc^\prime} \right).
\end{align*}
  We proceed to apply the  Poisson summation formula to the $m$-sum. The dual $m$-sum is given by 
   \begin{align}\label{dual m 2-}
    \sum_m \cdots= {MQ}{ }\sum_{m  }  \delta( \gamma c^\prime-\gamma^\prime c \equiv -m\, \mathrm{mod}\, Dcc^\prime)  I_1(\cdots),
\end{align}
where 
\begin{align}\label{integral poisson 3}
    I_1(\cdots)= \int w(z)    e\left( \frac{-mMQz}{Dcc^\prime}\right) \mathrm{d}z.
\end{align}
An integration by parts argument shows that $ I_1(\cdots)$ is negligibly small ($\ll D^{-2025}$)unless 
$$ |m| \leq N_2:=C^{\epsilon} \frac{DC^2}{ MQ}. $$
Note that $N_2<1$ as 
 $DC^2C^\epsilon < MQ=MC^\epsilon+ C^{2+\epsilon}D.$   Thus only the zero--frequency survives, i.e., we have $m=0$.
Using the congruence, we conclude that $c=c^\prime$ and $\gamma=\gamma^\prime (Dc)$. Hence 
 \begin{align*}
    S(C,N;D)  \ll \sum_m|a_m|^2MQ &     \mathop{\sumstar_{ \substack{c \sim {C}{  }
       } } }  \ \sumstar_{\substack{ \gamma \shortmod{cD}}  }   |\beta(\gamma,c) \overline{\beta}( \gamma,  c)|   \ll MQ \sum_m|a_m|^2.
\end{align*}
Hence the lemma follows.
\end{proof}
We also need a hybrid-type large sieve bound.
\begin{lemma}\label{hybrid large-sieve with level}
    Let $a_m$ be  any complex numbers  and $D>0$ be a fixed  integer and $T >0 $ be a real number. Let $f$ be a $C^1$ function on $[N,2N]$ such that $f^\prime$ does not vanish. Let  $X=\sup_{x \in [N,2N]} \frac{1}{|f^\prime(x)|}$. Then we have  
    \begin{align*}
   S_T(C,N;D):=\mathop{\sum_{\substack{c \sim C \\ (c,D)=1}}}\sumstar_{\gamma (cD)} \int_{-T}^{T} \Big|\sum_{N \leq m <  2N} a_m  e(tf(m)) e\left( \frac{\gamma m}{cD} \right) \Big| ^2\mathrm{d}t  \ll  (C^2DT+X) \sum_m|a_m|^2.
\end{align*}
\end{lemma}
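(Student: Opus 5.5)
Following the proof of Lemma \ref{large sieve with level}, we plan to open the square and reduce to a weighted diagonal-plus-off-diagonal estimate. The bound will combine two mechanisms: orthogonality of the additive characters modulo $cD$ produces the $C^2D$ factor, and integration in $t$ produces the $T$ or $X$ factor. First we insert a smooth majorant: replace $\int_{-T}^T$ by $\int_{\mathbb{R}} w(t/T)\,\mathrm{d}t$ with $w\ge 0$ smooth, $w\geq 1$ on $[-1,1]$, and $\hat w$ compactly supported or rapidly decaying. (For instance take $w=|\hat\phi|^2$ with $\phi$ compactly supported.) Opening the square then gives
\[
S_T\le \sum_{m_1,m_2} a_{m_1}\overline{a_{m_2}}\; T\,\hat w\big(T(f(m_2)-f(m_1))\big)\sum_{\substack{c\sim C\\(c,D)=1}}\ \sumstar_{\gamma (cD)} e\Big(\frac{\gamma(m_1-m_2)}{cD}\Big).
\]
The $t$-integral is explicit, and $\hat w\ge 0$ can be arranged by the choice $w=\phi*\tilde\phi$.

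Next we treat the inner sum, which is a Ramanujan sum and so bounded in absolute value by $\sum_{c}\gcd(m_1-m_2,cD)$. That estimate is too lossy, so instead we keep the arithmetic and the oscillatory parts separate. The cleaner route is a hybrid duality argument. By Lemma \ref{dualitylemma} it suffices to bound, for $\beta(\gamma,c,t)$ of unit $L^2$ norm,
\[
\sum_{N\le m<2N}\Big|\sum_{c,\gamma}\int_{-T}^T \beta(\gamma,c,t)e(tf(m))e\Big(\frac{\gamma m}{cD}\Big)\mathrm{d}t\Big|^2 .
\]
Expanding this gives terms $\int\int \beta\bar\beta' \sum_m e\big((t-t')f(m)+m(\tfrac{\gamma}{cD}-\tfrac{\gamma'}{c'D})\big)$. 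We then handle two regimes. When the frequencies $(c,\gamma)=(c',\gamma')$ coincide, the $m$-sum is a Kusmin--Landau type sum in the phase $(t-t')f(m)$. Combined with the integration, this gives the contribution $X$: the map $t\mapsto \int\beta(t')\ldots$ is essentially a Fourier transform along $f$, and the Hilbert-type inequality $\sum_m|\int \beta(t)e(tf(m))dt|^2\ll (T+X)\|\beta\|^2$ follows from the Montgomery--Vaughan generalized Hilbert inequality, since the spacing $|f(m)-f(m')|\ge |m-m'|/X$. When the frequencies differ, they are separated by at least $1/(C^2D)$, and this is where the $C^2DT$ term must come from.

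The main obstacle is combining the two separations. The points $\tfrac{\gamma}{cD}$ are $1/(C^2D)$-spaced in $\mathbb{R}/\mathbb{Z}$, and the integral in $t$ contributes length $T$. We would try to view $(\gamma/cD, t)$ as well-spaced points and $(m, f(m))$ as the dual variables, and apply the Montgomery--Vaughan/Gallagher large sieve in the form
\[
\sum_{r}\int_{-T}^{T}\Big|\sum_m b_m e(x_r m + tf(m))\Big|^2\mathrm{d}t\ll \big(\delta^{-1}T + X\big)\sum|b_m|^2 ,
\]
with $\delta=(C^2D)^{-1}$. To do this, we first apply Gallagher's lemma in $t$ for each fixed $r=(c,\gamma)$. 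This reduces the $t$-integral to $T^2\int|\sum_{|f(m)-u|<1/(2T)} b_m e(x_rm)|^2\,du$. For fixed $u$, the inner $m$ lie in a short interval of length at most about $X/T$ (since $f'$ is bounded below by $1/X$). We then apply Lemma \ref{large sieve with level} to this short interval, with $M\asymp X/T+1$, which gives a bound of $(X/T+C^2D)\sum_{|f(m)-u|<1/2T}|b_m|^2$. Integrating over $u$ contributes a factor $1/T$, so in total we get $T^2\cdot T^{-1}(X/T+C^2D)\sum|b_m|^2=(X+C^2DT)\sum|b_m|^2$. This is exactly the claim, up to the $C^\epsilon$ loss from Lemma \ref{large sieve with level}, which we expect to be harmless or removable via Lemma \ref{C=1}.
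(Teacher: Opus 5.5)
Your final argument, Gallagher's lemma followed by Lemma \ref{large sieve with level}, is correct. It proves the lemma in the form the paper actually obtains, i.e.\ with the $C^\epsilon$ loss coming from Lemma \ref{large sieve with level}. The underlying strategy is the paper's: confine $m$ to windows of length $\asymp X/T$, apply Lemma \ref{large sieve with level} there, and gain the factor $T$ from the integration. The localisation mechanism, however, is genuinely different.

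\emph{The paper's route.} It inserts a smooth weight $W(t/T)$ and opens the square in $t$ for each fixed $(c,\gamma)$. It uses the decay of $\widehat W$ to discard pairs with $|f(m_1)-f(m_2)|>C^\epsilon/T$, which removes all pairs with $|m_1-m_2|>C^\epsilon X/T$. It then cuts $[N,2N)$ into blocks $I_k$ of length $Y=\min\{C^\epsilon X/T,N\}$ with enlarged neighbours $J_k$, decouples $I_k\times J_k$ by AM--GM, and applies Lemma \ref{large sieve with level} to the coefficients $a_m e(tf(m))$ for each fixed $t$.

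\emph{Your route.} Gallagher's lemma does all of this in one exact step. The phase $e(tf(m))$ disappears. Since $f$ is strictly monotone, the window $\{m: |f(m)-u|\le \eta\}$ is a run of consecutive integers of length $\le 2\eta X+1$. Finally, $\int_{\mathbb R}\sum_{|f(m)-u|\le\eta}|a_m|^2\,du=2\eta\sum_m|a_m|^2$.

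\emph{What each buys.} Your version has no smooth weight and no tail to discard; the paper's truncation error is negligible only when $N$ is polynomially bounded in $C$. It also has no block decomposition and no additional $C^\epsilon$ from the localisation. The paper's version, in exchange, uses nothing beyond the Fourier decay of a bump function.

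Two small corrections.

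First, Gallagher's lemma reads
\[
\int_{-T}^{T}\Big|\sum_\nu c(\nu)e(\nu t)\Big|^2\,dt\le \pi^2T^2\int_{\mathbb R}\Big|\sum_{|\nu-u|\le 1/(4T)}c(\nu)\Big|^2\,du .
\]
With your half-width $1/(2T)$ the Plancherel kernel $\sin(\pi t/T)/(\pi t)$ vanishes at $t=\pm T$, and the inequality as you wrote it is not valid. Take $\eta=1/(4T)$ instead; this only changes constants.

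Second, the $C^\epsilon$ cannot be removed with Lemma \ref{C=1}, which concerns a single modulus. It disappears if you replace Lemma \ref{large sieve with level} by the classical large sieve, since the points $\gamma/(cD)$ with $(\gamma,cD)=1$, $c\sim C$, are $\gg (C^2D)^{-1}$-spaced, as you yourself observed.

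The exploratory material before the Gallagher step (Ramanujan sums, duality, Kusmin--Landau, the Hilbert inequality) is not needed and can be dropped.
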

\begin{proof}
Let $W$ be a non-negative smooth bump  function supported on $[1/2,3/2]$ such that $W=1$ on $[1,2]$. Thus we have 
  \begin{align*}
   S_T(C,N;D) \leq  \mathop{\sum_{\substack{c \sim C \\ (c,D)=1}}}\sumstar_{\gamma (cD)} \int_{0}^{\infty}W(t/T) \Big|\sum_{N \leq m <  2N} a_m  e(tf(m)) e\left( \frac{\gamma m}{cD} \right) \Big| ^2\mathrm{d}t.
\end{align*}
For any sequence of complex numbers $(b_m)$, we have 
\begin{align}\label{open the abs}
    \int_{T}^{2T } \Big|\sum_{N \leq m <  2N} b_m  e(tf(m))  \Big| ^2\mathrm{d}t \leq  \sum_{m_1}  \sum_{m_2}b_{m_1} \overline{b}_{m_2} \int_{0}^{\infty} W\left( \frac{t}{T} \right) e\left( t(f(m_1)-f(m_2) \right)  \mathrm{d}t 
\end{align}
 By a change of variable, we see that the $t$-integral is negligibly small unless $$ |f(m_1)-f(m_2)| \leq C^{\epsilon}/T. $$ 
 By the mean-value theorem, we have
 \[|m_2-m_1| \leq {|f(m_1)-f(m_2)|X} \leq {C^\epsilon X}/{T}.\]
Let $Y=\min\{C^\epsilon X/T,N \}$. We dissect the sum over $m_1$ and $m_2$ into the intervals $$I_k=[N+kY,N+(k+1)Y), \ 0 \leq k \ll N/Y $$   as follows:
\[    \sum_{N \leq m_1<2N}  \sum_{N \leq m_2 <2N}\cdots = \sum_{0\leq k\ll N/Y}\sum_{m_1 \in I_k} \sum_{m_2 \in J_k} \cdots,\]
where $J_k=I_k+([-Y,Y]\cap \mathbb{Z})\cap [N,2N).$
Thus the right side of \eqref{open the abs} is 
\begin{align*}
   &\leq \frac{1}{2} \int_{0}^{\infty} W\left( \frac{t}{T} \right) \sum_k \Big| \sum_{m_1 \in I_k} b_{m_1} e\left( t(f(m_1) \right) \Big|^2+\Big|\sum_{m_2 \in J_k} \bar{b}_{m_2} e\left( -tf(m_2) \right) \Big|^2\mathrm{d}t.
     \end{align*}
We focus on the second term, as the first term is similar, which, on taking  $b_m=a_m e\left(\frac{\gamma m}{cD}\right)$ is given by 
\begin{align*}
    \frac{1}{2} \int_{0}^{\infty} W\left( \frac{t}{T} \right) \sum_k \Big|\sum_{m \in J_k} a_m e\left(\frac{\gamma m}{cD}\right) e\left( tf(m) \right) \Big|^2\mathrm{d}t.
\end{align*}
Thus on summing over $c$ and $\gamma$, we arrive at 
\begin{align*}
     \frac{1}{2} \int_{0}^{\infty} \mathop{\sum_{\substack{c \sim C \\ (c,D)=1}}}\sumstar_{\gamma (cD)} W\left( \frac{t}{T} \right) \sum_k \Big|\sum_{m \in J_k} a_m e\left(\frac{\gamma m}{cD}\right) e\left( tf(m) \right) \Big|^2\mathrm{d}t.
\end{align*}
On applying Lemma \ref{large sieve with level} with the coefficients $a_m e(t(f(m)))$, the above is  
\[ \ll C^\epsilon \sum_{k \ll N/Y} T (C^2D+ Y)\sum_{m \in J_k}|a_m|^2 \ll C^\epsilon (TC^2D + X )\sum_m|a_m|^2.\]
The lemma follows.
\end{proof}

We  borrow the following  lemma from  \cite{MY11}.
\begin{lemma} \label{matt lemma}
For all integers $m$, $n$, and positive integers $c$, we have
 \begin{equation}\label{eq:Stwist}
  S(m,n;c) \e{-m-n}{c} = \sum_{ab =c} \sum_{\substack{x \shortmod{b} \\ (x(x+a),b)=1}} \e{\overline{x} m - (\overline{x+a})n}{b}.
 \end{equation}
\end{lemma}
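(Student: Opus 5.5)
Lemma \ref{matt lemma} is the twisted Kloosterman sum identity borrowed from \cite{MY11}, and the plan is to verify it directly by expanding both sides. Write the left side as
\[
\sumstar_{y \shortmod{c}} \e{m\bar y + n y - m - n}{c} = \sumstar_{y \shortmod{c}} \e{m(\bar y -1) + n(y-1)}{c}.
\]
For each invertible $y \bmod c$ set $g=(y-1,c)$; the idea is to parametrize the residue classes $y$ according to this gcd. Put $a=g$, $b=c/a$, and write $y-1 = a\,u$ with $u$ defined modulo $b$. The condition $(y,c)=1$ becomes $(1+au, c)=1$, and the condition $(y-1,c)=a$ becomes $(u,b)=1$. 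This is not quite right when $a$ and $b$ share factors, which is exactly why the right-hand side uses the variable $x$ with the condition $(x(x+a),b)=1$. So rather than a gcd decomposition I would look for a direct bijection.

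The cleaner route is a change of variables producing the shape $\overline{x}m - \overline{(x+a)}n$ modulo $b$. Since $\bar y - 1 = \bar y(1-y)$, we have
\[
m(\bar y-1)+n(y-1) = (y-1)(n - m\bar y).
\]
Writing $y-1 = a u$ with $a\mid c$ as above, the phase becomes $\e{u(n - m\bar y)}{b}$, i.e. it depends only on data modulo $b$. I would then set $x = \bar u$ (mod $b$), noting $(u,b)=1$. Then $y = 1 + a\bar x$, so $\bar y \equiv x\,\overline{(x+a)} \pmod b$, and
\[
u(n-m\bar y) \equiv \bar x n - \bar x x\,\overline{(x+a)}\,m = \bar x n - \overline{(x+a)}\,m \pmod b .
\]
This is the right-hand side with $m$ and $n$ interchanged, which is harmless because $S(m,n;c)=S(n,m;c)$; alternatively one can start from $y\mapsto\bar y$. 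The condition $(y,c)=1$ translates into $(x+a, b)=1$, at least for the part of $y$ modulo $b$.

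The main obstacle is the bookkeeping that makes $y \mapsto (a,x)$ a genuine bijection between $(\mathbb Z/c)^\times$ and the pairs $\{(a,b,x): ab=c,\ x \bmod b,\ (x(x+a),b)=1\}$. Given $(a,b,x)$, I would define $y$ as the unique lift modulo $c=ab$ of $1+a\bar x$, where $\bar x$ is taken modulo $b$ so that $a\bar x$ is well defined modulo $ab$. Conversely, given $y$, one must recover $a$ canonically; the natural choice is $a=(y-1,c)$, but then $u=(y-1)/a$ need only be coprime to $c/a$ up to common factors. To check that the counts agree, I would verify the identity prime-by-prime using multiplicativity: both sides are twisted-multiplicative in $c$ via the Chinese remainder theorem, so it suffices to treat $c=p^k$. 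There one splits by $v_p(y-1)=j$ and checks that $a=p^j$ (or $a=c$ when $y\equiv1$) gives exactly the classes $x$ modulo $p^{k-j}$ that are units with $x+p^j$ a unit; that condition is automatic for $j\ge1$ and equivalent to $y$ being a unit when $j=0$. Checking the measure, i.e. that each $y$ corresponds to exactly one $x$, is the delicate point, and I would confirm it by counting both sides at $c=p^k$.
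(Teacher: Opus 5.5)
The paper does not prove this lemma; it is quoted from \cite{MY11}. So the only question is whether your argument stands on its own. Your phase computation is correct: with $y-1=au$ and $x\equiv\overline{u}\pmod b$ one gets $y\equiv 1+a\overline{x}$ and $\overline{y}\equiv x\overline{(x+a)}\pmod b$. The summand then becomes $\e{\overline{x}n-\overline{(x+a)}m}{b}$, which gives the lemma after using $S(m,n;c)=S(n,m;c)$.

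\textbf{The gap.} What is missing is the bijectivity of $y\mapsto(a,x)$, which you explicitly leave open. Moreover, the reason you give for abandoning the gcd parametrization is false. For every integer $u$ one has $\gcd(au,ab)=a\gcd(u,b)$. Hence $(y-1,c)=a$ holds if and only if $(u,b)=1$, whether or not $a$ and $b$ share prime factors; there is no ``up to common factors'' defect.

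\textbf{How to close it.} The map $y\mapsto(a,x)$, with $a=(y-1,c)$, $b=c/a$ and $x\equiv\overline{(y-1)/a}\pmod b$, is therefore well defined. Its inverse is $(a,x)\mapsto y\equiv 1+a\overline{x}\pmod c$. Here $a\overline{x}$ is well defined modulo $ab$, and $\gcd(a\overline{x},ab)=a$ recovers $a$, hence also $x$. The unit condition transfers exactly, not only ``for the part modulo $b$''. If $p\mid a$ then $y\equiv 1\pmod p$. If $p\mid b$ then $xy\equiv x+a\pmod p$. So $(y,c)=1$ if and only if $(x+a,b)=1$. The class $y\equiv 1$ corresponds to $a=c$, $b=1$. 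This finishes the proof directly, with no reduction needed.

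\textbf{Why the fallback does not close the gap as stated.} Twisted multiplicativity of the left side is standard. For the right side it is a separate verification that you assert without doing: write $a=a_1a_2$, $b=b_1b_2$ and substitute $x\mapsto a_2x$ modulo $b_1$. Then $x+a$ becomes $a_2(x+a_1)$, and the resulting twist $\overline{a_2b_2}=\overline{c_2}$ matches the one on the left.

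More importantly, ``counting both sides at $c=p^k$'' only shows that the two index sets have the same size. It proves an identity of exponential sums only when combined with the phase matching and the injectivity of your explicit map. At that point the prime-power reduction buys nothing. Indeed, at $c=p^k$ you would be using exactly the parametrization you rejected for general $c$: $a=p^j$ with $u$ a unit modulo $p^{k-j}$.
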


\subsection{Automorphic  forms on $\mathrm{GL(3)}$}
Let $\pi$ be a Hecke--Maass cusp form of type $(\nu_{1}, \nu_{2})$ for $\mathrm{SL(3, \mathbb{Z})}$. Let $\lambda_{\pi}(n,r)$ denote the normalised Fourier coefficients of $\pi$. Let 
\[ {\alpha}_{1} = - \nu_{1} - 2 \nu_{2}+1, \, {\alpha}_{2} = - \nu_{1}+ \nu_{2}  \ \mathrm{and} \ {\alpha}_{3} = 2 \nu_{1}+ \nu_{2}-1\]
be the spectral parameters for $\pi$.
Let $g$ be a compactly supported smooth function on  $ (0, \infty )$ and 
\[ \tilde{g}(s) = \int_{0}^{\infty} g(x) x^{s-1} \mathrm{d}x\]
 be its Mellin transform. For $\ell= 0$ and $1$, we define
\begin{equation}
	\gamma_{\ell}(s) :=  \frac{\pi^{-3s-\frac{3}{2}}}{2} \, \prod_{i=1}^{3} \frac{\Gamma\left(\frac{1+s+{\alpha}_{i}+ \ell}{2}\right)}{\Gamma\left(\frac{-s-{\alpha}_{i}+ \ell}{2}\right)}.
\end{equation}
Set $\gamma_{\pm}(s) = \gamma_{0}(s) \mp \gamma_{1}(s)$ and let 
\begin{align}\label{gl3 integral transform}
	G_{\pm}(y) = \frac{1}{2 \pi i} \int_{(\sigma)} y^{-s} \, \gamma_{\pm}(s) \, \tilde{g}(-s) \, \mathrm{d}s,
\end{align}
where $\sigma > -1 + \max \{-\Re({\alpha}_{1}), -\Re({ \alpha}_{2}), -\Re({\alpha}_{3})\}$. 
\begin{lemma} \label{gl3voronoi}
	Let $g(x)$ and  $\lambda_{\pi}(n,r)$ be as above. Let $a, \, q \in \mathbb{Z}$ with $q\geq 1, (a,q)=1,$ and  $a\bar{a} \equiv 1(\mathrm{mod} \ q)$. Then we have
	\begin{align*} \label{GL3-Voro}
		\sum_{n=1}^{\infty} \lambda_{\pi}(r,n) e\left(\frac{an}{q}\right) g(n) 
		=q  \sum_{\pm} \sum_{n_{1}|qr} \sum_{n_{2}=1}^{\infty}  \frac{\lambda_{\pi}(n_1,n_2)}{n_{1} n_{2}} S\left(r \bar{a}, \pm n_{2}; qr/n_{1}\right) G_{\pm} \left(\frac{n_{1}^2 n_{2}}{q^3 r}\right),
	\end{align*} 
	where  $S(a,b;q)$ is the  Kloosterman sum which is defined as follows:
	\[ S(a,b;q) = \sideset{}{^\star}{\sum}_{x \,{\mathrm{mod}}\,  q} e\left(\frac{ax+b\bar{x}}{q}\right).\]
\end{lemma}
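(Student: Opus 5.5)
The statement to prove is the $\mathrm{GL}(3)$ Voronoi summation formula of Lemma~\ref{gl3voronoi}. It is the standard formula of Miller--Schmid, in the form given by Li and Goldfeld--Li. The plan is to derive it from the functional equations of additively twisted $L$-functions, followed by Mellin inversion.

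\textbf{Step 1: the Dirichlet series side.} Write $g(x)=\frac{1}{2\pi i}\int_{(\sigma_0)}\tilde g(s)x^{-s}\,\mathrm{d}s$ for large $\sigma_0$. The left side then becomes
\[
\frac{1}{2\pi i}\int_{(\sigma_0)}\tilde g(s)\,L_r(s,a/q)\,\mathrm{d}s, \qquad L_r(s,a/q)=\sum_n \lambda_\pi(r,n)e(an/q)n^{-s}.
\]
Expanding $e(an/q)$ through Dirichlet characters is awkward because of the non-primitive and $r$-dependence issues. I would instead work directly with the automorphic form. Restrict $\pi$ (as a function on $\mathrm{GL}_3(\mathbb{R})$) along the embedded $\mathrm{GL}_2$ and use its invariance under the element of $\mathrm{SL}_3(\mathbb{Z})$ built from $\begin{pmatrix} a & b\\ q & \bar a\end{pmatrix}$. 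This produces a functional equation relating $L_r(s,a/q)$, split by parity ($\ell=0,1$, i.e.\ even/odd parts in $n\mapsto -n$), to the dual series
\[
\sum_{n_1\mid qr}\sum_{n_2}\frac{\lambda_\pi(n_1,n_2)}{n_1 n_2}\,S(r\bar a,\pm n_2;qr/n_1)\left(\frac{n_1^2 n_2}{q^3 r}\right)^{s-1}.
\]
The archimedean factor in this functional equation is $\gamma_\ell$, the ratio of $\Gamma$-factors $\prod_i \Gamma\big(\tfrac{1+s+\alpha_i+\ell}{2}\big)\big/\Gamma\big(\tfrac{-s-\alpha_i+\ell}{2}\big)$. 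The Kloosterman sums arise when the Fourier--Whittaker expansion of $\pi$ is unfolded along the unipotent radical and the double coset decomposition with respect to $\Gamma_\infty\backslash\mathrm{SL}_3(\mathbb{Z})$ is used. The divisor sum over $n_1\mid qr$ comes from the Hecke relations for $\lambda_\pi(r,n)$, which mix the two indices. Entire continuation of $L_r$ follows from the cuspidality of $\pi$.

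\textbf{Step 2: contour shift.} Shift the contour to $\Re s=-\sigma$ with $\sigma$ as in the definition of $G_\pm$; no poles are crossed since $\pi$ is cuspidal. Insert the functional equation, interchange summation and integration (the dual series converges absolutely there), and substitute $s\to -s$. Each term then becomes
\[
\frac{1}{2\pi i}\int y^{-s}\gamma_\pm(s)\tilde g(-s)\,\mathrm{d}s=G_\pm(y), \qquad y=\frac{n_1^2n_2}{q^3r}.
\]
Combining the $\ell=0,1$ parts into $\gamma_\pm=\gamma_0\mp\gamma_1$ gives the two signs $\pm n_2$ in the Kloosterman sum, together with the overall factor $q$.

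\textbf{Main obstacle.} The crux is Step 1: establishing the twisted functional equation with exactly the Kloosterman sums $S(r\bar a,\pm n_2;qr/n_1)$ and the correct normalisations. This is a substantial computation, and in the context of this paper I would not redo it. Instead I would cite Miller--Schmid (automorphic distributions), or the elementary proof of Ichino--Templier / Goldfeld--Li, and check only that their normalisation matches $\gamma_\ell$ and $G_\pm$ as defined above, via Stirling and the standard parameters $\alpha_i$.
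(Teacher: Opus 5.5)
Your proposal takes essentially the same route as the paper: the paper's proof is simply a citation of Li's paper (where the formula is quoted from Miller--Schmid and Goldfeld--Li), so deferring the twisted functional equation to the literature and checking normalisations is exactly what is done there. One slip in your heuristic sketch is that, with the paper's normalisation of $G_\pm$, the dual series in the functional equation must carry the power $\bigl(\frac{n_1^2n_2}{q^3r}\bigr)^{s}$ and not $\bigl(\frac{n_1^2n_2}{q^3r}\bigr)^{s-1}$: otherwise the substitution $s\to -s$ produces $y^{-s-1}$ instead of $y^{-s}$, and the right-hand side picks up a spurious factor $q^3r/(n_1^2n_2)$.
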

\begin{proof}
	See \cite{Li09}. 
\end{proof}
In the following lemma we extract  oscillations of  $G_{\pm}$. 
\begin{lemma} \label{GL3oscilation}
	Let $G_{\pm}(x)$ be as above,  and  $g(x) \in C_c^{\infty}(X,2X)$. Then for any fixed integer $K \geq 1$ and $xX \gg 1$, we have
	\begin{equation*}
		G_{\pm}(x)=  x \int_{0}^{\infty} g(y) \sum_{j=1}^{K} \frac{c_{j}({\pm}) e\left(3 (xy)^{1/3} \right) + d_{j}({\pm}) e\left(-3 (xy)^{1/3} \right)}{\left( xy\right)^{j/3}} \, \mathrm{d} y + O \left((xX)^{\frac{-K+5}{3}}\right),
	\end{equation*}
	where $c_{j}(\pm)$ and $d_{j}(\pm)$ are some   absolute constants depending on $\alpha_{i}$,  $i=1,\, 2,\, 3$.  
\end{lemma}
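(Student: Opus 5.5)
This asymptotic is the standard stationary-phase expansion of the Mellin--Barnes integral $G_\pm$ in \eqref{gl3 integral transform}. The plan is to do the analysis on the gamma factor first and then integrate against $g$.

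First I would write $\tilde g(-s)=\int_0^\infty g(y)y^{-s-1}\,\mathrm{d}y$ and interchange the integrals. Since $g$ is compactly supported, this gives
\[
G_\pm(x)=\int_0^\infty g(y)\,y^{-1}\,\frac{1}{2\pi i}\int_{(\sigma)}(xy)^{-s}\gamma_\pm(s)\,\mathrm{d}s\,\mathrm{d}y .
\]
So it suffices to understand the kernel $\mathcal{G}_\pm(z)=\frac{1}{2\pi i}\int_{(\sigma)} z^{-s}\gamma_\pm(s)\,\mathrm{d}s$ for $z=xy\asymp xX\gg 1$. The interchange needs some care, since $\gamma_\pm(\sigma+i\tau)$ grows like $|\tau|^{3\sigma+3/2}$. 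I would therefore shift $\sigma$ far to the left of the region where this growth matters, or insert a smooth cutoff in $\tau$ and treat the tails by repeated integration by parts in $y$. The latter shows that $\tilde g(-s)$ decays rapidly in $|\tau|$ once $|\tau|\gg X^{\epsilon}\cdot$(scale of $g$), which justifies Fubini.

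Next I would apply Stirling's formula to each ratio $\Gamma\big(\frac{1+s+\alpha_i+\ell}{2}\big)/\Gamma\big(\frac{-s-\alpha_i+\ell}{2}\big)$ with $s=\sigma+i\tau$ and $|\tau|$ large. This gives
\[
\gamma_\ell(\sigma+i\tau)=|\tau/2\pi|^{3(\sigma+1/2)}\exp\!\big(3i\tau\log(|\tau|/2\pi e)\big)\Big(c_\ell^{\pm}+\sum_{j\ge1}\frac{a_{j,\ell}^{\pm}}{|\tau|^{j}}\Big),
\]
with the sign $\pm$ according to the sign of $\tau$. Here $\sum_i\alpha_i=0$ has been used to kill linear terms. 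The constants $c,a$ depend only on the $\alpha_i$ and $\ell$. The $\tau$-integral $\int|\tau|^{3\sigma+3/2-j}e\big(\frac{3\tau}{2\pi}\log\frac{|\tau|}{2\pi e}-\frac{\tau}{2\pi}\log z\big)\,\mathrm{d}\tau$ then has a single stationary point at $|\tau|=2\pi z^{1/3}$, since the phase derivative $\frac{1}{2\pi}\big(3\log\frac{|\tau|}{2\pi}-\log z\big)$ vanishes there. The phase value at that point is $\mp3z^{1/3}$, which produces $e(\pm3z^{1/3})$. Second-order stationary phase (the second derivative is $\asymp 1/|\tau|$) gives a factor $|\tau|^{1/2}$. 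Combining with the $z^{-\sigma}$ and $|\tau|^{3\sigma+3/2}$ factors, the $\sigma$-dependence cancels, as it must, and the main term is $z^{1}\cdot z^{-1/3}$ in total. Together with the $y^{-1}$ and $x$ bookkeeping this yields the claimed shape
\[
x\sum_j \big(c_j e(3z^{1/3})+d_j e(-3z^{1/3})\big)z^{-j/3}.
\]
The lower-order Stirling terms and the higher stationary-phase corrections produce the further terms $z^{-j/3}$.

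Alternatively, and more cleanly, one can avoid doing stationary phase by hand by quoting the known asymptotic of the kernel (e.g.\ from Li, or Blomer's / Ren--Ye's treatment). This states that $\mathcal{G}_\pm(z)=z^{2/3}\sum_{j=1}^K\big(c_j e(3z^{1/3})+d_je(-3z^{1/3})\big)z^{-(j-1)/3}\cdot z^{-\cdots}+O(z^{(-K+5)/3}\cdot\ldots)$, with powers arranged so that after multiplying by $y^{-1}=x/z$ one gets exactly the stated expression. The error term is handled by integrating the kernel remainder against $|g|$ over $y\asymp X$, which gives $O((xX)^{(-K+5)/3})$ after normalization.

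The main obstacle is the truncation step: controlling the remainder uniformly after $K$ terms, and making sure the non-stationary ranges of $\tau$ (small $|\tau|$, where Stirling fails, and $|\tau|$ far from $z^{1/3}$) contribute only to the error. For small $|\tau|$ I would shift the contour to $\sigma$ very negative, where the integrand is $z^{-\sigma}$ times bounded factors and hence tiny. For $|\tau|$ away from $2\pi z^{1/3}$ I would use repeated integration by parts on the monotone phase.
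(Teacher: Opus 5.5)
The paper gives no argument of its own for this lemma. Its proof is a bare reference to Lemma~6.1 of Li's paper, which is exactly your fallback, so at that level you agree with the paper.

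Your primary route is the standard self-contained derivation: Stirling for $\gamma_\pm$, then stationary phase in $\tau$ at $|\tau|=2\pi z^{1/3}$ with $z=xy$. Your bookkeeping there is right. The phase at the stationary point is $\mp 6\pi z^{1/3}$, giving $e(\mp 3z^{1/3})$. The size $z^{1/2}\cdot z^{1/6}=z^{2/3}$, times $y^{-1}=x/z$, produces the $j=1$ term $x(xy)^{-1/3}$. However, the step you yourself single out as the main obstacle is handled incorrectly.

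For small $|\tau|$ you propose moving to $\Re s=\sigma$ very negative, saying $z^{-\sigma}$ is then tiny. Since $z\asymp xX\gg 1$, $z^{-\sigma}$ is huge there. Moreover, the shift crosses the poles of $\gamma_\pm(s)$ at $s=-1-\alpha_i-\ell-2n$. Their residues have size roughly $(xX)^{1+\Re\alpha_i+\ell+2n}$; together they rebuild the small-argument series of the kernel, which has enormous cancellation when $z\gg 1$.

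You must go the other way. Before any interchange, move the line in $G_\pm(x)=\frac{1}{2\pi i}\int x^{-s}\gamma_\pm(s)\tilde g(-s)\,\mathrm{d}s$ to $\Re s=\sigma$ large. This is legitimate because $\gamma_\pm$ has no poles to the right of the original line, and $\tilde g(-s)$ is entire and decays rapidly in $\tau$. On that line:
\begin{itemize}
\item The range $|\tau|\le (xX)^{1/3-\delta}$ contributes $\ll (xX)^{-\sigma}(xX)^{(1/3-\delta)(3\sigma+5/2)}$, which is negligible once $\sigma$ is large.
\item The range $|\tau|\ge T$ is negligible by the decay of $\tilde g$, for $T$ larger than both $(xX)^{1/3+\delta}$ and the frequency scale of $g$.
\end{itemize}
Only on the remaining, smoothly truncated range should you interchange with the $y$-integral and run Stirling plus stationary phase. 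The untruncated kernel $\frac{1}{2\pi i}\int z^{-s}\gamma_\pm(s)\,\mathrm{d}s$ is not absolutely convergent on such lines, so the interchange cannot be done first, as in your opening step.

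Finally, take the Stirling order $K'$ large in terms of $K$, $\delta$ and $\sigma$. Its remainder, of size $|\tau|^{3\sigma+1/2-K'}$, is then integrable and lands inside the stated error $O\bigl((xX)^{(-K+5)/3}\bigr)$. With these corrections your route does prove the lemma, and it buys a self-contained argument in place of the citation.
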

\begin{proof}
	See   \cite[Lemma 6.1]{Li09}.
\end{proof}

\section{Petersson trace formula}
As a first step, we apply approximate functional equation  (see Th. 5.3 and Prop. 5.4 in \cite{IK04}, Theorem 3.9 in \cite{BKL18}) we get  
\begin{align*}
    |L(1/2, \pi \times f)|^2 \ll  \Big| \sum_{n=1}^{\infty} \frac{ \lambda_{\pi \times f}(n)}{\sqrt{n}}W\left( \frac{n}{M^{3/2}}\right)\Big|^2,
\end{align*}
where 
\[ \lambda_{\pi \times f}(n)= \sum_{\ell^2m=n} \lambda_\pi(\ell,m)\lambda_f(m)\psi(\ell) \]

and $W$ is a smooth function satisfying
\[x^jV^{(j)}(x) \ll \frac{1}{(1+|x|)^A}, \ \ \mathrm{for \ any \ } A>0. \]
Using a dyadic partition of unity, we further see that for any $A> 1$
\begin{align}\label{AFE}
    |L(1/2, \pi \times f)|^2 \ll \sup_\ell \sup_{N\ell^2 \ll M^{3/2+\epsilon}}\sum_{\ell} \frac{|S_\ell(N)|^2}{N}+M^{-A},
    \end{align}

where 
\[S_\ell(N)=\sum_{m=1}^\infty \lambda_\pi(\ell,m) \lambda_f(m)V_{\ell,N}\left(\frac{m}{N}\right)\]
where $V_{\ell, N}(x)$ is a smooth function supported in  $[1,2] $ and satisfying 
\begin{align}\label{bd for V}
     V_{\ell,N}^{(j)}(x) \ll_j 1, \ \ j \geq 0.
\end{align}
In further analysis we only require an upper bound for $V_{\ell,N}$  of the form  \eqref{bd for V} which is independent of $\ell$ and $N$. Henceforth we  ignore the dependence
on $\ell$ and $N$ and assume $V_{\ell,N}=V$.  Thus on applying \eqref{AFE} to $\mathcal{S}_\pi(M)$, we get 
\begin{align}
    \mathcal{S}_\pi(M) \ll \sup_\ell \sup_{N\ell^2 \ll M^{3/2+\epsilon}} \sum_{\psi(M_1)}\,  \sideset{}{^h} \sum_{f \in H_k^{\star}(M, \psi)}\frac{|S_\ell(N)|^2}{N}+M^{-A}
\end{align}
Next we extend $H_k^{\star}(M, \psi)$ to $H_k(M, \psi)$  by positivity and proceed to apply Petersson trace formula. Indeed, we have 
\begin{align}\label{after peter}
    \mathcal{S}_\pi(M) \ll \sup_\ell \sup_{N\ell^2 \ll M^{3/2+\epsilon}} \frac{1}{N}  \mathop{\sum\sum}_{m,m^\prime=1}^\infty \lambda_\pi (\ell, m) \overline{\lambda_{\pi}}(\ell,m^\prime) Z_{M}(m,m^\prime)V\left(\frac{m}{N}\right)\overline{V}\left(\frac{m^\prime}{N}\right) +M^{-A},
\end{align}
where 
\[Z_M(m,m^\prime)=  \sum_{\psi(M_1)}\sum_{f \in H_k^{}(M, \psi)}\omega_f^{-1}  \lambda_f(m) \overline{\lambda_f(m^\prime)}.\]
On applying \eqref{petersson} to the above expression we get that 
\[Z_M(m,m^\prime)= \frac{M_1-1}{2} \delta(m-m^\prime)+\mathcal{O}\mathcal{D}(m,m^\prime),\]
where
\begin{align*}
    \mathcal{O}\mathcal{D}(m,m^\prime)= 2 \pi i^{-k} \mathop{\sum}_{ \substack{\psi(M_1) \\ \psi(-1)=(-1)^k}}
\sum_{c=1}^\infty  \frac{S_\psi(m,m^\prime;Mc)}{Mc} J_{k-1}\left( \frac{4\pi \sqrt{mm^\prime}}{Mc}\right).
\end{align*}
Next we use the expansion for $x>0$
\[J_k(2\pi x)=W_k(x)e(x)+\overline{W_k}(x)e(-x),  \] 
\[x^jW_k^{(j)}(x)\ll_{k,j}  \min\{x^{k-1}, \frac{x}{(1+x)^{3/2} }\}, \ j \geq 0. \]
Let $x<M^{-2025}.$ Then  $W_k(x) \ll  M^{-2025}$. Thus $ \mathcal{O}\mathcal{D}(m,m^\prime)$ is very small. In the case, $ x\geq M^{-2025},$ 
 we apply a dyadic partition of unity to the sum over $c$ to write
\begin{align*}
    &\mathcal{O}\mathcal{D}(m, m^\prime)=\sum_{C \ll \frac{NM^{2025}}{M}} {\mathcal{O}\mathcal{D}}_C(m, m^\prime),
\end{align*}

where
\begin{align}\label{OD}
   {\mathcal{O}\mathcal{D}}_C(m, m^\prime)=\frac{2 \pi i^{-k}}{M} \sum_{\beta \in \{\pm\}}
\mathop{\sum}_{\substack{c \sim C \\  }} \frac{\mathcal{C}_{Mc}(m, m^\prime)}{c} W^{\beta}\left(\frac{2 \sqrt{mm^\prime}}{Mc}\right),
\end{align}
and
$W^{\beta}(x)=e(\beta x)W_{k-1}^{\beta}(x)$ with
$W_{k-1}^{+}=W_{k-1}$, $W_{k-1}^{-}=\overline{W}_{k-1}$ and 
\[\mathcal{C}_{Mc}(m,m^\prime)=\mathop{\sum}_{ \substack{\psi(M_1) \\ \psi(-1)=(-1)^k}}
  {S_\psi(m,m^\prime;Mc)}.\]
 On combining  the above analysis together, we infer that 
\begin{align}\label{S M after peter}
    \mathcal{S}_\pi(M) 
     \ll \frac{M_1-1}{2}+\sup_\ell \sup_{N\ell^2 \ll M^{3/2+\epsilon}} \sum_{C   }\frac{|\mathrm{OD}_{\pi}(\ell,C, N)| }{N},
\end{align}
where 
    \begin{align}\label{OD coprime}
         \mathrm{OD}_{\pi}(\ell,C, N)&= \mathop{\sum\sum}_{m,m^\prime=1}^\infty \lambda_\pi (\ell, m) \overline{\lambda_{\pi}}(\ell,m^\prime) {\mathcal{O} \mathcal{D}}_C(m,m^\prime) V\left(\frac{m}{N}\right)\overline{V}\left(\frac{m^\prime}{N}\right),
         \end{align}
\section{Separation of variables}

\subsection{Character sum}
In this section we separate $m$ and $m^\prime$ in the  character sum $  \mathcal{C}_{Mc}(m,m^\prime)$  using Lemma \ref{matt lemma}. 
\begin{lemma} \label{separ of char}
Let $c=c_0 M_1^{b_1} M_2^{b_2}$, $b_1 \geq 0$ and $ b_2 \geq 0$. Then we have  $0\leq b_1, b_2 \ll \log C$ and  
\begin{align}
        \mathcal{C}_{Mc}(m,m^\prime)= \frac{M_1-1}{2} \sum_{\eta \in \{\pm\} }\eta  \mathop{\sum}_{\substack{\alpha (M_1^{b_1})  }} \sum_{p_1p_2=M_2^{1+b_2}} \sum_{\substack{\gamma_1 \shortmod{p_1} \\ (\gamma_1(\gamma_1+p_2),p_1)=1}} \sum_{ad=c_0}\sum_{\substack{\gamma \shortmod{d} \\ (\gamma(\gamma+a),d)=1}}  \mathcal{C}_1^{\eta}(m, . ) \mathcal{C}_2^{\eta}(m^\prime, .) ,
    \end{align}
    where 
    \[ \mathcal{C}_1^{\eta}(m, . ) = e\left(\frac{   m \alpha}{M_1^{b_1}} \right) e\left(\frac{\eta m M_1^{b_1}}{Mc }   \right) \e{\eta \overline{M_1c_0} \overline{\gamma_1} m} {p_1}  \e{\eta \overline{M_1^{1+b_1}M_2^{1+b_2} } \overline{\gamma} m }{d}, \]
and 
    
     \[ \mathcal{C}_2^{\eta}(m^\prime, . ) = e\left(\frac{   - m^\prime \alpha}{M_1^{b_1}} \right) e\left(\frac{\eta m^\prime M_1^{b_1}}{Mc }   \right) \e{- \eta (\overline{\gamma_1+p_2})  \overline{M_1c_0} m^\prime}{p_1}  \e{ - \eta (\overline{\gamma+a})  \overline{M_1^{1+b_1} M_2^{1+b_2}} m^\prime}{d}    \]

\end{lemma}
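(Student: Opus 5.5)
The plan is to evaluate the character sum over $\psi$ first, then split the modulus $Mc$ by the Chinese remainder theorem, and finally separate $m$ from $m^\prime$ in each prime-power piece by Lemma \ref{matt lemma}. The bound $0\le b_1,b_2\ll \log C$ is immediate: $M_1^{b_1}M_2^{b_2}\le c\le 2C$ and $M_1,M_2\ge 2$.

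\textbf{Step 1: the sum over $\psi$.} Write $S_\psi(m,m^\prime;Mc)=\sumstar_{x \,(Mc)}\psi(x)\,e\big((mx+m^\prime\overline{x})/(Mc)\big)$. Orthogonality of characters modulo the prime $M_1$, restricted to $\psi(-1)=(-1)^k$, gives
\[\sum_{\substack{\psi(M_1)\\ \psi(-1)=(-1)^k}}\psi(x)=\frac{M_1-1}{2}\big(\delta_{x\equiv 1 (M_1)}+(-1)^k\delta_{x\equiv -1 (M_1)}\big).\]
Hence $\mathcal{C}_{Mc}(m,m^\prime)$ equals $\tfrac{M_1-1}{2}\sum_{\eta=\pm}(\text{sign})\sum^*_{x\,(Mc),\, x\equiv \eta (M_1)} e\big((mx+m^\prime\overline{x})/(Mc)\big)$. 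The signs $(-1)^k$ are absorbed into the factor written $\eta$ in the statement.

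\textbf{Step 2: CRT splitting.} Factor $Mc=M_1^{1+b_1}\cdot M_2^{1+b_2}\cdot c_0$ with the three factors pairwise coprime. Apply the Chinese remainder theorem to $x$, so the exponential factors into a product over the three moduli, with $m,m^\prime$ multiplied by the inverses of the complementary moduli. This produces the factors $\overline{M_1c_0}$ and $\overline{M_1^{1+b_1}M_2^{1+b_2}}$ in the statement.

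\begin{itemize}
\item On the $M_2^{1+b_2}$ and $c_0$ components, $x$ runs over all reduced residues. Each component is therefore an ordinary Kloosterman sum, $S(\overline{M_1c_0}m,\overline{M_1c_0}m^\prime;M_2^{1+b_2})$ and $S(\overline{M_1^{1+b_1}M_2^{1+b_2}}m,\dots;c_0)$.
\item On the $M_1^{1+b_1}$ component, $x\equiv\eta \pmod{M_1}$. Parametrize $x=\eta(1+M_1\alpha)$ with $\alpha$ modulo $M_1^{b_1}$.
\end{itemize}

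\textbf{Step 3: separating $m$ and $m^\prime$.} Apply Lemma \ref{matt lemma} to each Kloosterman sum. Taking $b=p_1$, $a=p_2$ with $p_1p_2=M_2^{1+b_2}$, and $ad=c_0$, gives the sums over $\gamma_1 \pmod{p_1}$ and $\gamma \pmod d$ with terms $e(\overline{x}m-\overline{(x+a)}m^\prime)$. The cost is the twist $e((m+m^\prime)/\cdot)$ on each modulus. The sign $\eta$ enters after replacing $\gamma_1,\gamma$ by $\eta$-multiples; equivalently, the signs can be moved into $m,m^\prime$.

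\textbf{Step 4: reassembling the twists (main obstacle).} The three twists, together with the linearization of $\overline{x}$ on the $M_1$-part, must recombine by reciprocity into the single factors $e(\eta mM_1^{b_1}/(Mc))$ and $e(\eta m^\prime M_1^{b_1}/(Mc))$. The $M_1$-part gives the residual phase $\eta(m+m^\prime)/M_1^{1+b_1}$ times a complementary inverse, plus the $\alpha$-terms. Since $\overline{1+M_1\alpha}$ is not $1-M_1\alpha$ in general, I would handle this by inverting the roles: sum over $x$ and $\overline{x}$ symmetrically. Concretely, write $x\overline{x}=1$ with $x=\eta+M_1u$, and use that the map $u\mapsto$ ($u$-part of $\overline{x}$) is a bijection on residues modulo $M_1^{b_1}$. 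If this does not produce exactly the linear phases $\pm m\alpha/M_1^{b_1}$, I would instead apply Lemma \ref{matt lemma} to the full $M_1$-component Kloosterman-type sum, as was done for the other moduli.

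Finally, the product of all the twist phases should collapse, via $\sum_i 1/q_i\cdot\overline{(\prod_{j\ne i}q_j)}\equiv 1/\prod q_i \pmod 1$, into $e(\eta(m+m^\prime)M_1^{b_1}/(Mc))$, up to the sign and normalization conventions of the statement. Checking this reciprocity bookkeeping is the step I expect to need the most care.
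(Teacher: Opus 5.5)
Your Steps 1--3 follow the paper's own route: evaluate the $\psi$-sum, split $Mc=M_1^{1+b_1}\cdot M_2^{1+b_2}\cdot c_0$, use reciprocity, and apply Lemma~\ref{matt lemma} to the two unrestricted Kloosterman sums. For $b_1=0$ and odd $k$ this does prove the formula. (For even $k$ the $\eta=-$ term has coefficient $+1$, so $(-1)^k$ is not absorbed into $\eta$.) The genuine gap is Step~4, and it cannot be closed, because for $b_1\ge1$ the identity is false.

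The paper's proof passes this point by asserting, with $K=M_2^{1+b_2}c_0$,
\[
\sumstar_{\substack{\alpha\, (\mathrm{mod}\ M_1^{1+b_1})\\ \alpha\equiv\pm1\, (\mathrm{mod}\ M_1)}} e\Big(\frac{\overline{K}(\alpha m+\overline{\alpha}m^\prime)}{M_1^{1+b_1}}\Big)
= e\Big(\frac{\pm\overline{K}(m+m^\prime)}{M_1}\Big)\sum_{\alpha\, (\mathrm{mod}\ M_1^{b_1})} e\Big(\frac{\overline{K}(m-m^\prime)\alpha}{M_1^{b_1}}\Big).
\]
This is precisely the linearization $\overline{1+M_1u}\equiv 1-M_1u$ that you distrust, so the paper's argument is also only valid for $b_1=0$.
\begin{itemize}
\item For $b_1=1$ the congruence does hold modulo $M_1^2$, but the leading phase is then $e(\pm\overline{K}(m+m^\prime)/M_1^{2})$, not $/M_1$.
\item For $b_1=2$ and $x=\eta(1+M_1u)$ one has $x+\overline{x}\equiv\eta(2+M_1^2u^2)\pmod{M_1^3}$. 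So for $m=m^\prime$ prime to $M_1$ the left side is a phase times $M_1$ times a quadratic Gauss sum, of modulus $M_1^{3/2}$, while the right side has modulus $M_1^{2}$.
\end{itemize}

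Here is an explicit counterexample to the lemma itself: take $c=M_1$ (so $c_0=1$, $b_1=1$, $b_2=0$), $m=m^\prime=M_2$, and $k$ odd.
\begin{itemize}
\item On the right, every phase attached to $p_1\mid M_2$ or to $d=1$ is trivial. The $\alpha$-sum is $M_1$ and the $(p_1,\gamma_1)$-sum has $1+(M_2-2)$ terms. This gives $\frac{M_1-1}{2}M_1(M_2-1)\big(e(2/M_1)-e(-2/M_1)\big)$.
\item On the left, $e\big(M_2(x+\overline{x})/(M_1^2M_2)\big)=e\big((x+\overline{x})/M_1^2\big)$, and each unit modulo $M_1^2$ has $M_2-1$ unit lifts modulo $M_1^2M_2$. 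Using $\overline{\eta(1+M_1u)}\equiv\eta(1-M_1u)\pmod{M_1^2}$, one gets $\mathcal{C}_{Mc}(M_2,M_2)=\frac{M_1-1}{2}M_1(M_2-1)\big(e(2/M_1^2)-e(-2/M_1^2)\big)$.
\end{itemize}
Since $\sin(4\pi/M_1^2)\neq\sin(4\pi/M_1)$ for $M_1\ge3$, the two sides differ.

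Consequently neither of your remedies can work.
\begin{itemize}
\item The bijection $u\mapsto v$ given by $\overline{\eta(1+M_1u)}=\eta(1+M_1v)$ is $v\equiv -u+M_1u^2-\cdots\pmod{M_1^{b_1}}$. Whichever variable you sum over, one of the phases $mu/M_1^{b_1}$, $m^\prime v/M_1^{b_1}$ is nonlinear once $b_1\ge2$.
\item Lemma~\ref{matt lemma} does not apply to the $M_1$-component. Because of the restriction $x\equiv\eta\pmod{M_1}$, that component is not a complete Kloosterman sum.
\end{itemize}
Your Step~2 is also wrong for $b_1\ge1$. The CRT multiplier on the $M_2^{1+b_2}$-component is $\overline{M_1^{1+b_1}c_0}$, not $\overline{M_1c_0}$, and $S(\lambda m,\lambda m^\prime;q)=S(m,\lambda^2m^\prime;q)$ genuinely depends on $\lambda^2$. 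Even where linearization is valid ($b_1=1$), the twists collapse to $e(\eta(m+m^\prime)/(Mc))$, not $e(\eta(m+m^\prime)M_1^{b_1}/(Mc))$. The factor $M_1^{b_1}$ in the statement is an artifact of the wrong denominator $M_1$.

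What can actually be proved:
\begin{itemize}
\item For $b_1=0$: the lemma as stated.
\item For $b_1=1$: the same shape, with $e(\eta m/(Mc))$, $e(\eta m^\prime/(Mc))$ and $\overline{M_1^{2}c_0}$ in place of $e(\eta mM_1^{b_1}/(Mc))$, $e(\eta m^\prime M_1^{b_1}/(Mc))$ and $\overline{M_1c_0}$.
\item For $b_1\ge2$: a differently shaped separation. For example, detect $x\equiv\eta\pmod{M_1}$ by $\frac{1}{M_1}\sum_{j\,(\mathrm{mod}\ M_1)}e(j(x-\eta)/M_1)$. This writes the restricted sum as an average of complete sums $S(m+jMc/M_1,m^\prime;Mc)$, to which Lemma~\ref{matt lemma} applies with all $j$-dependence on the $m$-side.
\end{itemize}
The later argument only needs upper bounds, so such a corrected separation would suffice there. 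However, the $b_1\ge2$ input to Lemma~\ref{easy bound} would have to be redone.
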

\begin{proof} We have 
    \begin{align*}
    \mathcal{C}_{Mc}(m,m^\prime)=&= \frac{1}{2}\sideset{}{^\star}{\sum}_{\alpha (Mc)}  e_{Mc}(\alpha m+\overline{\alpha} m^\prime) \sum_{\psi(M_1)} (1 \pm{}\psi(-1))\psi(\alpha) \\
  &= \frac{M_1-1}{2} \sum_{\pm}\pm\sideset{}{^\star}{\sum}_{\alpha (Mc)}\delta(\alpha=\pm 1 \, \mathrm{mod}\, M_1) e_{Mc}(\alpha m+\overline{\alpha} m^\prime) \\
  &= \frac{M_1-1}{2} \sum_{\pm}\pm e\left(\frac{\pm\overline{M_2c}(m+m^\prime)}{M_1} \right) S(\overline{M_1} m, \overline{M_1}m^\prime;M_2c).
\end{align*}
To the first factor we apply reciprocity 
\[ e\left(\frac{\pm\overline{M_2c}(m+m^\prime)}{M_1} \right)= e\left(\frac{\mp\overline{M_1}(m+m^\prime)}{M_2c} \right) e\left(\frac{\pm(m+m^\prime)}{Mc} \right).\]
We write $c$  as follows: $ c=c_0M_1^{b_1} M_2^{b_2} $, $ 0 \leq b_1, b_2 \ll  \log C $.   
Thus we have 
\begin{align*}
    \mathcal{C}_{Mc}(m,m^\prime)=&= \frac{1}{2}\sideset{}{^\star}{\sum}_{\alpha (Mc)}  e_{Mc}(\alpha m+\overline{\alpha} m^\prime) \sum_{\psi(M_1)} (1-\psi(-1))\psi(\alpha) \\
  &= \frac{M_1-1}{2} \sum_{\pm}\pm\sideset{}{^\star}{\sum}_{\alpha (Mc)}\delta(\alpha=\pm 1 \, \mathrm{mod}\, M_1) e_{Mc}(\alpha m+\overline{\alpha} m^\prime) \\
  &= \frac{M_1-1}{2} \sum_{\pm}\pm \mathop{\sumstar}_{\substack{\alpha (M_1^{1+b_1})  \\  \alpha = \pm 1 (M_1)}} e\left(\frac{\overline{M_2^{1+b_2}c_0}( \alpha m+\overline{\alpha } m^\prime)}{M_1^{1+b_1}} \right) S(\overline{M_1^{1+b_1}} m, \overline{M_1^{1+b_1}}m^\prime;M_2^{1+b_2}c_0).
\end{align*}
The sum over $\alpha$ can be evaluated as follows: 
\[ \mathop{\sumstar}_{\substack{\alpha (M_1^{1+b_1})  \\  \alpha = \pm 1 (M_1)}} e\left(\frac{ \overline{M_2^{1+b_2}c_0}( \alpha m+\overline{\alpha } m^\prime)}{M_1^{1+b_1}} \right) = e\left(\frac{\pm\overline{M_2^{1+b_2}c_0}(m+m^\prime)}{M_1} \right) \mathop{\sum}_{\substack{\alpha (M_1^{b_1})  }} e\left(\frac{ \overline{M_2^{1+b_2}c_0} (  m- m^\prime) \alpha}{M_1^{b_1}} \right) \]

To the first factor we apply reciprocity 
\[ e\left(\frac{\pm\overline{M_2^{1+b_2}c_0}(m+m^\prime)}{M_1} \right)= e\left(\frac{\mp\overline{M_1}(m+m^\prime)}{M_2^{1+b_2}c_0} \right) e\left(\frac{\pm(m+m^\prime)}{McM_1^{-b_1} } \right).\]
Thus we get 
\begin{align}
        \mathcal{C}_{Mc}(m,m^\prime)= \frac{M_1-1}{2} \sum_{\pm}\pm e\left(\frac{\pm(m+m^\prime)}{Mc M_1^{-b_1}}   \right) \mathop{\sum}_{\substack{\alpha (M_1^{b_1})  }} e\left(\frac{  (  m- m^\prime) \alpha}{M_1^{b_1}} \right) \mathcal{C}_{M_2^{1+b_2}c_0}^{\pm}(m, m^\prime),
    \end{align}
    where 
    \begin{align*}
    \mathcal{C}_{M_2^{1+b_2}c_0}^{\pm }(m, m^\prime) &=  e\left(\frac{\mp\overline{M_1}(m+m^\prime)}{M_2^{1+b_2}c_0} \right)  S(\overline{M_1^{1+b_1}} m, \overline{M_1^{1+b_1}}m^\prime;M_2^{1+b_2}c_0)  
\end{align*}
Next we analyse the above chararacter sum using Lemma  \ref{matt lemma}. Indeed, for $\eta=\pm$, we have
\[  \mathcal{C}_{M_2^{1+b_2}c_0}^{\eta}(m, m^\prime) = \mathcal{C}_{ M_2^{1+b_2}}^{\eta}( m, m^\prime)\mathcal{C}_{c_0}^{\eta}(  m,  m^\prime)  \]
where $  \mathcal{C}_{ M_2^{1+b_2} }^{\eta}( \cdots)$ (resp. $\mathcal{C}_{c_0 }^{\eta}( \cdots)$ ) denote the contribution of $ \mathcal{C}_{ M_2^{1+b_2} c_0 }^{\eta}( \cdots)$ corresponding to the modulus   $M_2^{1+b_2}$ (resp. $c_0$). To these character sums we apply Lemma  \ref{matt lemma} to see that 
\begin{align*}
    \mathcal{C}_{M_2^{1+b_2}c_0}^{\eta}(m, m^\prime) 
    &=  \sum_{p_1p_2=M_2^{1+b_2}} \sum_{\substack{\gamma_1 \shortmod{p_1} \\ (\gamma_1(\gamma_1+p_2),p_1)=1}} \sum_{ad=c_0}\sum_{\substack{\gamma \shortmod{d} \\ (\gamma(\gamma+a),d)=1}}  \mathfrak{C}_1^{\eta}(m, \gamma,\gamma_1, ad) \mathfrak{C}_2^{\eta}(m^\prime, \gamma, \gamma_1, ad),
\end{align*}
where 
\[ \mathfrak{C}_1^{\eta}(\cdots)=\mathfrak{C}_1^{\eta}(m, \gamma,\gamma_1, ad)=  \e{\eta \overline{M_1c_0} \overline{\gamma_1} m} {p_1}  \e{\eta \overline{M_1^{1+b_1}M_2^{1+b_2} } \overline{\gamma} m }{d},\]
and 
\[ \mathfrak{C}_2^{\eta}(\cdots)=  \mathfrak{C}_2^{\eta}(m^\prime, \gamma, \gamma_1, ad)= \e{- \eta (\overline{\gamma_1+p_2})  \overline{M_1c_0} m^\prime}{p_1}  \e{ - \eta (\overline{\gamma+a})  \overline{M_1^{1+b_1} M_2^{1+b_2}} m^\prime}{d}.\]
 The lemma follows.
\end{proof}
\subsection{Mellin inversion}
In this section we separate $m$ and $m^\prime$ in the weight function $W^{\beta}\left(\frac{2 \sqrt{mm^\prime}}{Mc}\right)$ using the Mellin inversion trick. To this end,  we attach a smooth, compactly--supported weight function $w_1(\sqrt{mm^\prime}/N)$ that 
takes the value  1 for all $m$ and $m^\prime$ in the support of $V$. Using the Mellin--inversion, we have
\begin{align}\label{mellin 2}
     w_1 \left(\frac{\sqrt{mm^\prime}}{N}\right) W^{\beta}\left(\frac{2 \sqrt{mm^\prime}}{Mc}\right)=\frac{1}{2\pi}\int_{\mathbb{R}} F(iy) \left(\frac{N}{\sqrt{mm^\prime}} \right)^{iy} \mathrm{d}y,
\end{align}
where 
\[F(iv)=F_{c,M,N}(iv)=\int_0^{\infty} w_1(x) W^{\beta}\left( \frac{2 Nx}{Mc}\right) x^{iv} \frac{\mathrm{d}x}{x}.\]
If $|y| > K:=(1+ \frac{N}{CM})M^{\epsilon}$, then repeated integration by parts shows $F (iy)$  is $O(M^{-A})$ for any $A>0$.  We consider three cases: small $C$, i.e., $ C \ll M^{-\epsilon}N/M$,  large $C$, i.e., $ C \gg M^{\epsilon}N/M$ and  generic $C$, i.e., $  M^{-\epsilon}N/M \ll C \ll M^{\epsilon}N/M$. 

First assume that $C \ll M^{-\epsilon}N/M$. For $|y| \leq K$, we write $W^\beta(z)=\frac{1}{{z^{1/2}}} e(\beta z) W_1(z) $ with $z^jW_1^{(j)}(z) \ll_k 1 $, $j \geq  0$.  
Then  we have 
\[F(iv)= \frac{\sqrt{Mc}}{\sqrt{N}}\int_0^{\infty} w_1(x) x^{-1/2}e\left( \beta \frac{2 Nx}{Mc}\right) W_1\left(  \frac{2 Nx}{Mc}\right) x^{iv} \frac{\mathrm{d}x}{x}.\]
We apply the second derivative bound  to see that 
\begin{align}
    F(iy) \ll M^\epsilon {CM/N}.
\end{align}

Next we assume that  $ M^{\epsilon}N/M \ll C \ll M^{2025}N/M $. In this case we use the bound $W^{\beta}(z) \ll z^{k-1} \leq z^2, z<1, $  to get $ F(iy) \ll M^\epsilon {N^2}/M^2C^2. $

In the generic case $  M^{-\epsilon}N/M \ll C \ll M^{\epsilon}N/M$, we have $F(iy) \ll M^{\epsilon}. $
We summarize the above discussion in the following lemma. 
\begin{lemma}\label{separ}
   Let $K:=(1+ \frac{N}{CM})M^{\epsilon}.$ Then  we have 
    \begin{align}\label{mellin}
     w_1 \left(\frac{\sqrt{mm^\prime}}{N}\right) W^{\beta}\left(\frac{2 \sqrt{mm^\prime}}{Mc}\right)=\frac{1}{2\pi}\int_{-K}^{K} F(iy) \left(\frac{N}{\sqrt{mm^\prime}} \right)^{iy} \mathrm{d}y +O(M^{-2025}),
\end{align}
    with 
    \begin{align}\label{second derivative}
    F(iy) \ll M^\epsilon \min \{ {CM/N}, N^2/M^2C^2\}.
\end{align}
\end{lemma}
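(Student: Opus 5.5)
Our plan is to prove Lemma \ref{separ} in two steps: first the truncated Mellin inversion with the cutoff $K$, then a uniform bound for $F(iy)$ on $|y|\le K$, split into the three ranges of $C$ discussed above.

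\emph{Step 1: truncation.} Put $z=\sqrt{mm^\prime}/N$. The function $h(x)=w_1(x)W^{\beta}(2Nx/(Mc))$ is smooth and compactly supported in $x\asymp 1$, and $F(iy)$ is its Mellin transform $\tilde h(iy)$. Mellin inversion on the line $\Re s=0$ gives
\[
h(z)=\frac{1}{2\pi}\int_{\mathbb{R}}F(iy)z^{-iy}\,\mathrm{d}y,
\]
which is \eqref{mellin 2}. Put $Z=2Nx/(Mc)$. Writing $W^{\beta}(Z)=e(\beta Z)W_{k-1}^{\beta}(Z)$ and using $Z^jW_{k-1}^{(j)}(Z)\ll 1$, each $x$-derivative of $h$ costs at most a factor $1+N/(CM)$. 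Integrating by parts $j$ times against $x^{iy}$ (that is, differentiating $h$ and dividing by $y$) gives
\[
F(iy)\ll_j \left(\frac{1+N/(CM)}{|y|}\right)^{j}\sup|h|.
\]
For $|y|>K=(1+N/(CM))M^\epsilon$ this is $\ll M^{-j\epsilon}$. Choosing $j$ large and integrating the tail over $|y|>K$ gives the $O(M^{-2025})$ error.

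\emph{Step 2: bounding $F(iy)$ for $|y|\le K$.}

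\emph{Large $C$,} $C\gg M^{\epsilon}N/M$. Here $Z\ll M^{-\epsilon}<1$. Use $W^{\beta}(Z)\ll Z^{k-1}\le Z^2$, valid since $k\ge 3$, and integrate trivially over the compact support. This gives $F(iy)\ll (N/(MC))^2$, which is the smaller entry of the minimum in this range.

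\emph{Generic $C$,} $C\asymp N/M$ up to $M^{\pm\epsilon}$. Here $Z\asymp M^{O(\epsilon)}$ and $W^{\beta}\ll1$, so the trivial bound gives $F\ll M^\epsilon$. This is consistent with \eqref{second derivative}, since both entries of the minimum are $M^{O(\epsilon)}$ here.

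\emph{Small $C$,} $C\ll M^{-\epsilon}N/M$. Here $Z$ is large. Write $W^{\beta}(Z)=Z^{-1/2}e(\beta Z)W_1(Z)$, so that
\[
F(iy)=\sqrt{Mc/N}\int w_1(x)x^{-1/2}W_1(\cdot)\,e\!\left(\phi(x)\right)\frac{\mathrm{d}x}{x},\qquad \phi(x)=\beta\frac{2Nx}{Mc}+\frac{y\log x}{2\pi}.
\]
The phase satisfies $|\phi''(x)|\asymp |y|$, so the second derivative test (with the amplitude handled by partial summation, since it has bounded variation) gives
\[
F(iy)\ll \sqrt{CM/N}\,|y|^{-1/2}.
\]
When $|y|\asymp N/(CM)$ this equals $CM/N$, as required. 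For other $y$ the phase has no stationary point, since then $|\phi'|\gg N/(CM)$. The first derivative test then gives $F(iy)\ll \sqrt{CM/N}\cdot CM/N$, which is smaller still.

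\emph{Main obstacle.} The delicate case is small $C$. The amplitude carries the factor $Z^{-1/2}$, and one must track it carefully: the clean $CM/N$ comes from combining the $Z^{-1/2}$ decay with the second-derivative saving $|y|^{-1/2}$ at the stationary point $|y|\asymp N/(CM)$. Finally, the minimum in \eqref{second derivative} is attained by the appropriate entry in each range, so the three cases combine to give the lemma.
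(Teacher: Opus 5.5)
Your proposal is correct and follows the paper's proof. Both use Mellin inversion truncated at $K$ by repeated integration by parts, then treat the three ranges of $C$ the same way: the second-derivative bound after extracting $Z^{-1/2}$ for small $C$, the bound $W^{\beta}(Z)\ll Z^{k-1}\le Z^2$ for large $C$, and the trivial bound for generic $C$. Your first-derivative-test step for small $|y|$ in the small-$C$ range, where $|\phi''|\asymp|y|$ degenerates, usefully makes explicit a point the paper passes over.
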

\section{AM--GM inequality}
On plugging  in Lemma \ref{separ of char} and Lemma \ref{separ} into  \eqref{OD}, we see that $ \mathrm{OD}_{\pi}(\ell,C, N)$ (see \eqref{OD coprime}) is given by 
\begin{align*}
       \mathrm{OD}_{\pi}^{}(\cdots) \ll & M^{\epsilon}\min\left\{\frac{M_1}{{N}}, \frac{M_1N^2}{C^3M^3}\right\} \sum_{\pm } \sum_{0 \leq b_1,  b_2 \ll \log C } \\
        & \times  \mathop{\sumstar_{ \substack{c_0 \sim \frac{C}{M_1^{b_1}M_2^{b_2}}
       } } }   \mathop{\sum}_{\substack{\alpha (M_1^{b_1})  }} \sum_{p_1p_2=M_2^{1+b_2}} \sum_{\substack{\gamma_1 \shortmod{p_1} \\ (\gamma_1(\gamma_1+p_2),p_1)=1}} \sum_{ad=c_0}\sum_{\substack{\gamma \shortmod{d} \\ (\gamma(\gamma+a),d)=1}}   \\
       & \times \int_{-K}^K \Big|\sum_m  \frac{ a_\ell(m)}{m^{iy/2}}   \mathcal{C}_1^{\pm}(\cdots)    \Big|  
        \Big|\sum_{m^\prime} \frac{\overline{a_\ell(m^\prime)}}{{m^\prime}^{iy/2}}   \mathcal{C}_2^{\pm}(\cdots ) \Big| \,\mathrm{d}y,
\end{align*}
where $a_\ell(m)=\lambda_\pi (\ell, m)V(m/N)$ and $\star$ on the $c_0$-sum indicates $(c_0,M)=1$. Using the AM-GM inequality $ |A||B|\leq 
\frac{1}{2}|A|^2 + \frac{1}{2} |B|^2$, we see that 
\begin{align}\label{od after am-gm}
    \mathrm{OD}_{\pi}(\cdots) \ll  M^{\epsilon}\min\left\{\frac{M_1}{{N}}, \frac{M_1N^2}{C^3M^3}\right\}   \sum_{\pm}  \sum_{0 \leq b_1, b_2 \ll \log C }  (\mathrm{OD}_{\pi}^{(1, \pm)}(\cdots)+\mathrm{OD}_{\pi}^{(2, \pm)}(\cdots )),
\end{align}
where 
\begin{align*}
       \mathrm{OD}_{\pi}^{(1, \pm)}(\cdots) = \mathop{\sumstar_{ \substack{c_0 \sim \frac{C}{M_1^{b_1}M_2^{b_2}}
       } } }  \mathop{\sum}_{\substack{\alpha (M_1^{b_1})  }} \sum_{p_1p_2=M_2^{1+b_2}} \sum_{\substack{\gamma_1 \shortmod{p_1} \\ (\gamma_1(\gamma_1+p_2),p_1)=1}} \sum_{ad=c_0}\sum_{\substack{\gamma \shortmod{d} \\ (\gamma(\gamma+a),d)=1}}   
        \int_{-K}^K \Big|\sum_m  \frac{ a_\ell(m)}{m^{iy/2}}   \mathcal{C}_1^{\pm}(m,.)    \Big|^2
       \,\mathrm{d}y,
\end{align*}
and 
\begin{align*}
       \mathrm{OD}_{\pi}^{(2, \pm)}(\cdots) = \mathop{\sumstar_{ \substack{c_0 \sim \frac{C}{M_1^{b_1}M_2^{b_2}}
       } } }  \mathop{\sum}_{\substack{\alpha (M_1^{b_1})  }} \sum_{p_1p_2=M_2^{1+b_2}} \sum_{\substack{\gamma_1 \shortmod{p_1} \\ (\gamma_1(\gamma_1+p_2),p_1)=1}} \sum_{ad=c_0}\sum_{\substack{\gamma \shortmod{d} \\ (\gamma(\gamma+a),d)=1}}   
        \int_{-K}^K \Big|\sum_{m^\prime}  \frac{ \overline{a_\ell(m^\prime)}}{m^{\prime iy/2}}   \mathcal{C}_2^{\pm}(m^\prime,.)    \Big|^2
       \,\mathrm{d}y,
\end{align*}
We further proceed with $ \mathrm{OD}_{\pi}^{(2,+)}(\ell,C, N)$ as the analysis for the other terms is mutatis mutandis or even easier. Recall that 
   \[ \mathcal{C}_2^{+}(m^\prime, . ) = e\left(\frac{   - m^\prime \alpha}{M_1^{b_1}} \right)  e\left(\frac{m^\prime}{MM_2^{b_2}c_0} \right) \e{-  (\overline{\gamma_1+p_2})  \overline{M_1c_0} m^\prime}{p_1}  \e{ -  (\overline{\gamma+a})  \overline{M_1^{1+b_1} M_2^{1+b_2}} m^\prime}{d}    \]
On relaxing the conditions $(\gamma_1,p_1)=1$ and $(\gamma,d)=1$ by positivity, and then using the change of  variable $\gamma_1+p_2\rightarrow \gamma_1$, $\gamma+a \rightarrow \gamma$,  and then merging $\gamma$ and $\gamma_1$ (after some standard change of variables), we see that $  \mathrm{OD}_{\pi}^{(2, +)}(\cdots)  $ is bounded by
\begin{align*}
        \int_{-K}^K  \mathop{\sumstar_{ \substack{c_0 \sim \frac{C}{M_1^{b_1}M_2^{b_2}}
       } } }  \mathop{\sum}_{\substack{\alpha (M_1^{b_1})  }} \sum_{p_1p_2=M_2^{1+b_2}}  \sum_{ad=c_0}   
        \sumstar_{\substack{ \gamma \shortmod{dp_1} }  }\Big| \sum_{m^\prime}  \frac{ \overline{a_\ell(m^\prime)}}{m^{\prime iy/2}}  e\left(\frac{   - m^\prime \alpha}{M_1^{b_1}} \right)  e\left(\frac{ \gamma m^\prime}{p_1d} \right)  e\left(\frac{m^\prime}{MM_2^{b_2}c_0} \right)  \Big|^2.
\end{align*}  On applying a dyadic partition of unity to the $a$-sum, we arrive at 
\begin{align}\label{OD after a}
     \mathrm{OD}_{\pi}^{(2, +)}(\cdots)  \ll  \sup_{A \ll C}  \sum_{a \sim A} \sum_{b_1 , b_2 \ll  \log C } G_{}^{b_1,b_2}(\ell, A,C,N),
\end{align}
where 
\[ G_{}^{b_1,b_2}(\ell, A,C,N)= \sum_{p_1 \mid M_2^{1+b_2}}  G_{p_1}^{b_1}(\ell, A,C,N),\]
\begin{align}\label{Gp}
       G_{p_1}^{b_1}(\ell, A,C,N) =  \mathop{\sumstar_{ \substack{d \sim \frac{C}{AM_1^{b_1}M_2^{b_2}}
       } } }   \mathop{\sum}_{\substack{\alpha (M_1^{b_1})  }}  \sumstar_{\substack{ \gamma \shortmod{dp_1} }  } \int_{-K}^K \Big| \sum_{m^\prime}  \frac{ \overline{a_\ell(m^\prime)}}{m^{\prime iy/2}}  e\left(\frac{   - m^\prime \alpha}{M_1^{b_1}} \right)  e\left(\frac{ \gamma m^\prime}{p_1d} \right)  e\left(\frac{m^\prime}{MM_2^{b_2}ad} \right)  \Big|^2 \mathrm{d}y,
\end{align}
\section{Analysis of $ G_{p_1}^{b_1}(\ell, A,C,N)$   }
\subsection{$b_1 \geq 1$ }
We first analyze $ G_{p_1}^{b_1}(\ell, A,C,N)$ for $b_1 \geq 1$ which is easier to deal with.
\begin{lemma}\label{easy bound}
    For  $b_1 \geq 1,$ we have 
    \[  G_{p_1}^{b_1}(\ell, A,C,N) \ll M^\epsilon \frac{C^2}{A} (N+ \frac{N^2}{M_1^2}),\]
    and thus 
    \[  \mathrm{OD}_{\pi}^{(2, +)}(\cdots)  \ll M^\epsilon  {C^2}{} (N+N^2/M_1^2).   \]
\end{lemma}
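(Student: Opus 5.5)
The plan is to write the inner sum in \eqref{Gp} as a single additive twist to one combined modulus, and then apply the hybrid large sieve of Lemma \ref{hybrid large-sieve with level}. Put $D:=M_1^{b_1}p_1$. Since $(d,M)=1$ and $p_1\mid M_2^{1+b_2}$, the moduli $M_1^{b_1}$, $p_1$, $d$ are pairwise coprime. By the Chinese remainder theorem, $-\alpha/M_1^{b_1}+\gamma/(p_1 d)\equiv \gamma'/(dD) \pmod 1$, with $\gamma'$ running over residues mod $dD$ that are reduced modulo $p_1d$.

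The residues $\alpha \pmod{M_1^{b_1}}$ are not reduced, so I first split $\alpha$ according to $M_1^{j}\,\|\,\alpha$, $0\le j\le b_1$. For each $j$ this lowers the modulus to $M_1^{b_1-j}p_1 d$ with a reduced residue, and it costs only a factor $b_1\ll\log C$. Since every term is nonnegative, I may then enlarge to all reduced residues mod $dD_j$, where $D_j=M_1^{b_1-j}p_1\le D$.

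The remaining factor $e\!\left(m'/(MM_2^{b_2}ad)\right)$ does not depend on $y$ and has modulus $1$. I absorb it into the coefficients: $b_{m'}:=\overline{a_\ell(m')}\,e\!\left(m'/(MM_2^{b_2}ad)\right)$, so $|b_{m'}|=|a_\ell(m')|$. Writing $m'^{-iy/2}=e(yf(m'))$ with $f(x)=-\log x/(4\pi)$, we have $X=\sup 1/|f'|\asymp N$. Lemma \ref{hybrid large-sieve with level} with $T=K$ and modulus variable $d\sim C/(AM_1^{b_1}M_2^{b_2})$ then gives
\[
G_{p_1}^{b_1}(\ell,A,C,N)\ll M^\epsilon\Big(\frac{C^2}{A^2M_1^{2b_1}M_2^{2b_2}}\,M_1^{b_1}p_1\,K+N\Big)\sum_{m}|a_\ell(m)|^2 .
\]
By Rankin--Selberg theory for $\pi$ (with $\ell$ treated as fixed, up to $\ell^\epsilon$-type losses), $\sum_m|a_\ell(m)|^2\ll M^\epsilon N$.

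It remains to check the numerology against the claimed bound. Using $p_1\le M_2^{1+b_2}$, $b_1\ge1$ and $K=(1+N/(CM))M^\epsilon$, the first term is at most
\[
\frac{C^2M_2}{A^2M_1}\Big(1+\frac{N}{CM}\Big)M^\epsilon .
\]
\begin{itemize}
\item Since $M_2\le M_1^{1+\epsilon}$, the part $C^2M_2/(A^2M_1)\ll M^\epsilon C^2/A$ is acceptable, and it becomes $C^2N/A$ after multiplying by $\sum_m|a_\ell(m)|^2\ll M^\epsilon N$.
\item The part $C^2M_2N/(A^2M_1CM)=CN/(A^2M_1^2)$ is at most $C^2N^2/(AM_1^2)$, once more after multiplying by $\sum_m|a_\ell(m)|^2$.
\item For the term $N\cdot N$ I use that the $d$-range is nonempty, i.e. $C\ge AM_1^{b_1}\ge AM_1$. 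Hence $C^2/A\ge AM_1^2\ge M_1^2$, so $N^2\le C^2N^2/(AM_1^2)$.
\end{itemize}
This proves the first bound. For the second, I sum over the $\ll M^\epsilon$ divisors $p_1$, over $a\sim A$ (a factor $A$), and over $b_1,b_2\ll\log C$ in \eqref{OD after a}, which gives $M^\epsilon C^2(N+N^2/M_1^2)$.

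The main obstacle I expect is the bookkeeping that merges $\alpha$ and $\gamma$ into one large-sieve family. Lemma \ref{hybrid large-sieve with level} requires reduced residues mod $cD$ with $(c,D)=1$, so the gcd-splitting of $\alpha$ must be done carefully to avoid overcounting beyond $\log$ factors. The other point to verify is that the $e\!\left(m'/(MM_2^{b_2}ad)\right)$ twist really can be absorbed without spoiling the $d$-uniformity. It can, because the coefficients are allowed to depend on $d$ and $a$: one applies the bound of Lemma \ref{large sieve with level} inside the proof for each fixed $d$ via duality, or more simply notes that the proof only uses $|b_m|$ after Poisson.
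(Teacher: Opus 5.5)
\textbf{There is a genuine gap at the central step.} Lemma~\ref{hybrid large-sieve with level}, like Lemma~\ref{large sieve with level} on which it rests, needs the coefficients $a_m$ to be the same for every modulus $c$ and every residue $\gamma$. The proof removes them by duality \emph{before} Poisson, and only then does the zero frequency force $c=c'$ and $\gamma=\gamma'$.

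In \eqref{Gp} the twist $e\big(m'/(MM_2^{b_2}ad)\big)$ depends on $d$, which is exactly the modulus variable you are sieving over. So your $b_{m'}$ is not an admissible coefficient sequence, and the remark that the proof ``only uses $|b_m|$ after Poisson'' is not correct. If you put this unimodular factor into the kernel instead, opening the square leaves an extra phase $e\big(m(d'-d)/(MM_2^{b_2}add')\big)$ in the $m$-sum. The effective points become $\frac{\gamma}{dD}+\frac{1}{MM_2^{b_2}ad}$. For $d\neq d'$ their differences are $\frac{1}{dd'}\big(\frac{\gamma d'-\gamma' d}{D}+\frac{d'-d}{MM_2^{b_2}a}\big)$ modulo $1$, which is not $\gg 1/(dd'D)$ in general. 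Nor is the twist flat: its phase varies by $\asymp NM_1^{b_1}/(MC)$ over $m\sim N$.

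Hence the bound $(C_1^2DK+N)\sum_m|a_\ell(m)|^2$, with $C_1=C/(AM_1^{b_1}M_2^{b_2})$, is unjustified. Your third bullet, which relies on the $N$-term carrying no factor from the $d$-sum, falls with it.

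\textbf{Your fallback of treating each $d$ separately does work, but it changes the bound and needs an input you did not use.}
\begin{itemize}
\item For fixed $d$ and $y$, Lemma~\ref{C=1} with modulus $dD$, summed over all residues, gives $(dD+N)\sum_m|a_\ell(m)|^2$. Summing over all residues makes the $M_1^j\,\|\,\alpha$ splitting unnecessary. Lemma~\ref{large sieve with level} with a single modulus would only give $d^2D$, which is too weak.
\item Summing over $d$ and integrating over $y$ then yields $K(C_1^2D+C_1N)N$.
\item You need $K\ll M^{\epsilon}$, and this is where $b_1\ge 1$ enters. From $M_1\mid c$ you get $C\gg M_1$, so $N/(CM)\ll M^{1/2+\epsilon}/M_1\ll M^{\epsilon}\sqrt{M_2/M_1}\ll M^{2\epsilon}$.
\item You also need $C_1N^2\le C^2N^2/(AM_1^2)$. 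This holds because $C\ge AM_1^{b_1}M_2^{b_2}$ and $b_1\ge1$.
\end{itemize}

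This repaired argument is essentially the paper's proof, which is cruder still. After observing $K\ll M^{\epsilon}$, the paper bounds the $d$-sum, the $\gamma \bmod d$ component and the $y$-integral trivially, absorbing everything into the coefficients for each fixed $(d,\gamma \bmod d,y)$. It then applies Lemma~\ref{C=1} only to the modulus $M_1^{b_1}p_1$, obtaining $\frac{C^2}{A^2M_1^{2b_1}M_2^{2b_2}}\,(M_1^{b_1}p_1+N)\,N$. This suffices because $b_1\ge1$ supplies the factor $M_1^{-2}$.
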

\begin{proof}
For $b_1 \geq 1,$ we have  { $C \gg {M^{-\epsilon} N}/{M}$}.  Indeed  
    $$ C \ll {M^{-\epsilon} N}/{M} \ \mathrm{and} \ M_1|c \implies  M_1 \leq M^{-\epsilon}N/M  \ll M^{-\epsilon } \sqrt{M} \implies M_1^{} \ll M^{ -2\epsilon}M_2.$$
On executing the sum over $d$,  $\gamma(d)$ and $y$-integral  trivially in \eqref{Gp}, we see that 
  
 \begin{align}
       G_{p_1}^{b_1}(\ell, A,C,N) \ll  M^\epsilon \frac{C^2  }{A^2 M_1^{2b_1} M_2^{2b_2} }  \mathop{\sum}_{\substack{\alpha (M_1^{b_1}p_1)  }}  \Big| \sum_{m\sim N}  {a_m}  e\left(\frac{    m \alpha}{M_1^{b_1}p_1} \right)     \Big|^2,
\end{align}
for  some  coefficients $a_m$  satisfying $\sum_{m \sim N} |a_m|^2 \ll_\pi N$. On applying  Lemma  \ref{C=1} with $D=M_1^{b_1}p_1$, we get 
\begin{align*}
       G_{p_1}^{b_1}(\ell, A,C,N) \ll  M^\epsilon \frac{C^2}{A^2 M_1^{2b_1} M_2^{2b_2} } ( M_1^{b_1}p_1+N)N \leq M^\epsilon \frac{C^2}{A} ( \frac{M_2}{M_1}N+ \frac{N^2}{M_1^2})
\end{align*}

The second part follows upon applying the above bound into \eqref{OD after a}. 
 \end{proof}

\subsection{Analysis of $ G_{p_1}^0(\ell, A,C,N)$}
We now analyse $ G_{p_1}^0(\ell, A,C,N)$, which is the heart of the matter. 
Recall that 
\begin{align}\label{pre voronoi}
     G_{p_1}^0(\ell, A,C,N) =  \mathop{\sumstar_{ \substack{d \sim \frac{C}{A M_2^{b_2}}
       } } }   \sumstar_{\substack{ \gamma \shortmod{dp_1} }  } \int_{-K}^K | H(d, \gamma,a,y)  |^2 \mathrm{d}y,
\end{align}
where
\[ H(d, \gamma,a,y)= \sum_{m=1}^\infty  \frac{ \overline{a_\ell(m )}}{m^{ iy/2}}    e\left(\frac{ \gamma m }{p_1d} \right)  e\left(\frac{m }{MM_2^{b_2}ad} \right),\]
with $a_\ell(m)=\lambda_\pi (\ell, m)V(m/N)$. 
We will make use of the $GL(3)$ coefficients by analysing the above sum via the $GL(3)$ Voronoi summation. 
On applying Lemma \ref{gl3voronoi} to $H(d, \gamma,a,y) $ with $g(m)=V\left( \frac{m}{N}\right) e\left(\frac{m }{MM_2^{b_2}ad} \right)m^{-iy/2}$ we get 
\begin{align}\label{after gl3 voronoi}
    H(d, \gamma,a,y)=p_1d \sum_{\pm} \sum_{n_1|p_1d\ell} \sum_{n_2=1}^\infty \frac{\overline{\lambda_\pi(n_1,n_2)}}{n_1n_2}S(\ell \overline{\gamma}, \mp n_2;p_1d\ell /n_1 )G_{\pm} \left( \frac{n_1^2n_2}{p_1^3d^3 \ell}\right),
\end{align}
where 
\begin{align*}
G_{\pm}(x) = \frac{1}{2 \pi i} \int_{(\sigma)} x^{-s} \, \gamma_{\pm}(s) \, \widetilde{g}(-s) \, \mathrm{d}s, \  \widetilde{g}(-s)= \int_0^\infty g(z) z^{-s-1} \mathrm{d}z.
\end{align*}
Next we analyse $G_\pm (x)$. 
\begin{lemma}\label{voronoi lemma}
	 Let $x=\frac{n_1^2n_2}{ p_1^3d^3\ell}$ and $N_2=\frac{M^\epsilon M_2^3 C^3\ell K^3 }{ A^3N}$.  Then 
	$G_{\pm}(x)$ is negligibly small unless  $n_2 \leq  N_{2}/n_1^2$, in  which  case, we have $G_{\pm} \left(x\right) = \left( Nx\right)^{1/2} I{(x)}+O(M^{-2025}) ,$ where $I(x)$ is an integral transform (see \eqref{integral of gl3})  satisfying $I(x) \ll \sqrt{K}$.
\end{lemma}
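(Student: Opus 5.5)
We analyse $G_\pm(x)$ directly through its Mellin representation \eqref{gl3 integral transform}, with the test function
\[
g(z)=V(z/N)\,e\!\left(\frac{z}{MM_2^{b_2}ad}\right)z^{-iy/2}.
\]
The approach has two parts. For large $x$ we use the oscillatory expansion of Lemma \ref{GL3oscilation} followed by stationary phase. For small $x$ we show the contribution is negligible by repeated integration by parts in the Mellin integral.

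\emph{Derivative bounds on $g$.} First record how $g$ oscillates. On the support $z\sim N$ the additive phase has
\[
\frac{N}{MM_2^{b_2}ad}\asymp \frac{N}{M C}\ll K,
\]
since $M_2^{b_2}ad\asymp c\sim C$ when $b_1=0$. The archimedean twist $z^{-iy/2}$ with $|y|\le K$ contributes at most $K$. Hence
\[
z^j g^{(j)}(z)\ll_j K^j.
\]
Thus $g$ is a bump of size $N$ whose total phase is $\ll K$.

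\emph{Truncation in $n_2$.} Insert the expansion of Lemma \ref{GL3oscilation} with $X=N$. Then $G_\pm(x)$ equals $x$ times integrals
\[
\int g(z)\,e\!\left(\pm 3(xz)^{1/3}\right)(xz)^{-j/3}\,dz,
\]
plus an error $O((xN)^{(-K'+5)/3})$, where $K'$ is the number of terms kept; choosing $K'$ large makes this error negligible. The phase $3(xz)^{1/3}$ has derivative $\asymp (x/N^2)^{1/3}$, while $g$ has derivatives of size $K/N$. So if $(xN)^{1/3}\gg M^\epsilon K$, repeated integration by parts makes each integral $O(M^{-A})$.

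Hence $G_\pm$ is negligible unless $xN\ll M^{3\epsilon}K^3$. Substituting $x=n_1^2n_2/(p_1^3d^3\ell)$ with $d\sim C/(AM_2^{b_2})$ and $p_1\le M_2^{1+b_2}$ gives
\[
p_1^3d^3\le M_2^{3}C^3/A^3 ,
\]
and therefore $n_2\le M^\epsilon M_2^3C^3\ell K^3/(A^3Nn_1^2)=N_2/n_1^2$. This is the stated range.

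\emph{The regime $xN\ll M^{\epsilon}$.} Here there is no oscillation to exploit. Shift the contour in \eqref{gl3 integral transform} to large $\Re s$ and integrate by parts in $\widetilde g(-s)$; equivalently, use Stirling on $\gamma_\pm(s)$ and absorb the at most polynomial-in-$K$ losses. This gives $G_\pm(x)\ll (xN)^{1/2}K^{O(1)}$, which is harmless. To fit the uniform shape of the lemma, I would define $I(x)$ through the Mellin integral in this regime, with normalising factor $(Nx)^{1/2}$.

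\emph{Main term, $M^\epsilon\ll xN\ll M^\epsilon K^3$.} Take the leading term $j=1$ and substitute $z=Nu$:
\[
x\int g(z)\,(xz)^{-1/3}e\!\left(\pm3(xz)^{1/3}\right)dz
=(Nx)^{2/3}\int V(u)\,u^{-1/3}\,e\!\left(h(u)\right)du ,
\]
where $h(u)=\pm3(xNu)^{1/3}+\frac{Nu}{MM_2^{b_2}ad}-\frac{y}{4\pi}\log u$. Writing
\[
(Nx)^{2/3}=(Nx)^{1/2}\cdot(Nx)^{1/6},
\]
we set $I(x)=(Nx)^{1/6}\int V(u)\,u^{-1/3}e(h(u))\,du$ plus the analogous lower-order terms; this is the transform of \eqref{integral of gl3}.

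To bound $I(x)$, note that $h''(u)$ is a combination of $(xN)^{1/3}u^{-5/3}$ and $y\,u^{-2}$. When $(xN)^{1/3}$ dominates, the second derivative test gives
\[
\int V(u)\,u^{-1/3}e(h(u))\,du\ll (xN)^{-1/6},
\]
so $I(x)\ll 1$. Otherwise $(xN)^{1/3}\ll|y|\le K$, and either $|y|$ gives $|h''|\gg |y|$, or, in the cancelling case, a stationary point is still non-degenerate up to $M^\epsilon$ losses. Either way the integral is $\ll K^{-1/2}$, and combined with $(xN)^{1/6}\le K^{1/2}$ this gives $I(x)\ll\sqrt K$. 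The lower-order terms $j\ge2$ carry extra factors $(xN)^{-(j-1)/3}$ and are smaller.

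\emph{Expected obstacle.} The hard step is the uniform second-derivative (stationary phase) bound for $h$. The three phases can partially cancel, so $h''$ could become small at a degenerate point. I would handle this by splitting according to which of $(xN)^{1/3}$ and $|y|$ dominates. In the balanced range, where all three phases are comparable, I would fall back on the third-derivative test. If that is not enough, I would use the trivial bound $(xN)^{1/6}\ll K^{1/2}$ together with the fact that the $u$-integral is $O(1)$, which still gives $\sqrt K$.
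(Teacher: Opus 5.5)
There is a genuine gap in your regime $xN\ll M^{\epsilon}$. This is exactly where the representation $G_\pm(x)=(Nx)^{1/2}I(x)$ is used later: the ``small $n_2$'' case behind Lemma~\ref{small n bound}. The large-$n_2$ case works with Lemma~\ref{GL3oscilation} directly.

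\begin{itemize}
\item \emph{Wrong contour direction.} Since $x^{-s}\widetilde g(-s)=(Nx)^{-s}\cdot(\dots)$, shifting the contour to large $\Re s$ produces a factor $(Nx)^{-\sigma}$. This grows as $xN\to0$, so it cannot give $(Nx)^{1/2}$. You must move to $\Re s=-1/2$.
\item \emph{Bound too weak.} On that line, bounding the $z$-integral trivially gives only $I(x)\ll K$. Your stated outcome $K^{O(1)}$ is weaker still, and neither is the claimed $I(x)\ll\sqrt K$.
\item \emph{The loss matters.} In the conclusion for $C\ll M^{-\epsilon}N/M$, the factor $K^2$ from Lemma~\ref{small n bound} is used with no room to spare. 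There $C^2K^2\asymp M^{2\epsilon}N^2/M^2$ is exactly what the final estimate can afford. Any extra power of $K$, which can be as large as $N/M$, breaks the theorem.
\item \emph{The form of $I(x)$ is needed.} The later argument uses the specific transform \eqref{integral of gl3}, not just a pointwise bound. The factor $(Nx)^{-i\tau}$ becomes $n_2^{-i\tau}$ and separates variables before the large sieve. One also needs the $L^2$ bound $\int_{|\tau|\le K}|\widetilde W(\tau,y)|^2\,d\tau\ll M^{\epsilon}$.
\end{itemize}

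The missing idea is square-root cancellation in the Mellin variable. The phase of $\widetilde W(\tau,y)$ is $\frac{zN}{MM_2^{b_2}ad}-\frac{\tau+y/2}{2\pi}\log z$. Its second derivative is $(\tau+y/2)/(2\pi z^2)$. The second derivative test then gives
\[
\widetilde W(\tau,y)\ll\min\{1,|\tau+y/2|^{-1/2}\}.
\]
Hence $\int_{|\tau|\le K}|\widetilde W|\,d\tau\ll\sqrt K$ and $\int_{|\tau|\le K}|\widetilde W|^2\,d\tau\ll\log K$. Together with $|\gamma_\pm(-1/2+i\tau)|\ll 1$, this yields $I(x)\ll\sqrt K$.

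This argument is uniform in $x$, and together with two further steps it is the paper's whole proof:
\begin{itemize}
\item truncation to $|\tau|\le K$ by integrating by parts in $\widetilde g$;
\item Stirling, $|\gamma_\pm(\sigma+i\tau)|\ll(1+|\tau|)^{3(\sigma+1/2)}$, at large $\sigma$. This gives $G_\pm(x)\ll K^{5/2}(Nx/K^3)^{-\sigma}$, hence the cutoff $Nx\ll M^\epsilon K^3$.
\end{itemize}
So your asymptotic expansion, stationary-phase analysis and case split are unnecessary. Your truncation via Lemma~\ref{GL3oscilation} is fine. In the range $M^\epsilon\ll xN\ll M^\epsilon K^3$, the trivial bound $(xN)^{1/6}\ll M^{\epsilon}\sqrt K$ already gives what you need. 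However, your $I(x)$ there is a stationary-phase integral rather than the transform \eqref{integral of gl3} that the later argument requires.
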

\begin{proof}
	We have 
	\begin{align}\label{Gy}
		G_{\pm}(x) &= \frac{1}{2 \pi i} \int_{(\sigma)} x^{-s} \, \gamma_{\pm}(s) \, \int_{0}^{\infty} V \left( \frac{z}{N}\right) e\left(\frac{z }{MM_2^{b_2}ad} \right)z^{-iy/2}  z^{-s-1} \, \mathrm{d}z\, \mathrm{d}s  \notag\\
		&= \frac{1}{2 \pi } \int_{-\infty}^{\infty} (Nx)^{-\sigma-i\tau} \, \gamma_{\pm}(\sigma+i\tau) \, \int_{0}^{\infty}  V \left( {z}{}\right) e\left(\frac{zN }{MM_2^{b_2}ad} \right)(Nz)^{-iy/2} z^{-\sigma-1-i\tau} \, \mathrm{d}z\, \mathrm{d}\tau.
	\end{align}
	On applying integration by parts, we conclude that the $z$-integral is negligibly small unless $|\tau| \leq  K.$
	By the  Stirling formula, we have  
	$$\gamma_{\pm}(\sigma+i\tau) \ll_{\pi,\sigma} (1+|\tau|)^{3(\sigma+1/2)}.$$
	Thus, on plugging this bound into \eqref{Gy}, we get 
	\begin{align}
		G_{\pm}(x)  \ll_{\pi} K^{5/2}  \left(\frac{Nx}{K^3 }\right)^{-\sigma}. 
	\end{align}
	  On taking   $\sigma $ sufficiently large (towards $\infty$) we see that $G_{\pm}(x) $ is negligibly small unless
	\begin{align}\label{N0}
		Nx \ll M^{\epsilon}K^3  \iff n_1^2n_2 \leq M^{\epsilon} \frac{p_1^3 d^3\ell K^3 }{N} \leq M^\epsilon \frac{p_1^3 C^3\ell K^3 }{M_2^{3b_2} A^3N}.
	\end{align}
	In the above range, we move the contour to $\sigma=-1/2$ to get 
	\begin{align}
		G_{\pm} \left(x\right) = \left( Nx\right)^{1/2} I{(x)}+O(M^{-2025}) ,
	\end{align}
	where 
	\begin{align}\label{integral of gl3}
		I(x):=	\frac{1}{2 \pi } \int_{|\tau| \leq K}  \left( Nx\right)^{-i\tau} \, \gamma_{\pm}(-1/2+i\tau)\widetilde{W}(\tau,y)  \mathrm{d}\tau ,
	\end{align}
	and 
	\begin{align}
		\widetilde{W}(\tau,y )= \int_{0}^{\infty}  V \left( {z}{}\right) e\left(\frac{zN }{MM_2^{b_2}ad} \right)(Nz)^{-iy/2} z^{-1/2-i\tau} \, \mathrm{d}z
	\end{align}

By the second derivative bound, we get 
\begin{align}\label{second der}
    \widetilde{W}(\tau,y) \ll 1/\sqrt{K}.
\end{align}
The lemma follows. 
\end{proof} 

On applying \eqref{after gl3 voronoi}  into \eqref{pre voronoi}  we see that 
\begin{align}\label{n_2 dyadic}
&G_{p_1}^0(\ell, A,C,N) \ll  \notag \\
       &\mathop{\sumstar_{ \substack{d \sim \frac{C}{A M_2^{b_2}}
       } } }   \sumstar_{\substack{ \gamma \shortmod{dp_1} }  }   \int_{-K}^K \Big| \sum_{\pm}p_1d \mathop{ \sum_{ \substack{n_1|p_1d\ell  } }}  \sum_{n_2 \leq  \frac{N_2}{n_1^2}  } \frac{\overline{\lambda_\pi(n_1,n_2)}}{ n_1{n_2} }S(\ell \overline{\gamma}, \mp n_2;p_1d\ell /n_1 )G_\pm \left( \frac{n_1^2n_2}{p_1^3d^3 \ell}\right) \Big|^2 \mathrm{d}y.
\end{align}
We consider two cases; small $n_2$ and large $n_2$.   
\subsubsection{Small $n_2$: $ n_1^2n_2N/(p_1d)^3 \ell \leq M^\epsilon$}\label{small n}  Let $N_0= M^\epsilon \frac{M_2^3 C^3\ell  }{ A^3N}$.
In this case, we apply  Lemma \ref{voronoi lemma} into \eqref{n_2 dyadic}, to see that $ G_{p_1}^0(\ell, A,C,N)$ is dominated by 
\begin{align*}
       &\mathop{\sumstar_{ \substack{d \sim \frac{C}{A M_2^{b_2}}
       } } }   \sumstar_{\substack{ \gamma \shortmod{dp_1} }  }  \frac{ {N}}{p_1d  {\ell}} \int_{-K}^K \Big| \sum_{\pm} \sum_{n_1|p_1d\ell} \sum_{n_2 } \frac{\overline{\lambda_\pi(n_1,n_2)}}{ \sqrt{n_2} }S(\ell \overline{\gamma}, \mp n_2;p_1d\ell /n_1 )I \left( \frac{n_1^2n_2}{p_1^3d^3 \ell}\right) \Big|^2 \mathrm{d}y \\
        \ll & \mathop{\sumstar_{ \substack{d \sim \frac{C}{A M_2^{b_2}}
       } } }   \sumstar_{\substack{ \gamma \shortmod{dp_1} }  }  \frac{{N} K}{p_1d {\ell}} \sum_{n_1|p_1d\ell} \int_{-K}^K  \Big|   \sum_{ n_2 \leq N_0/n_1^2} \frac{\overline{\lambda_\pi(n_1,n_2)}}{ \sqrt{n_2}  } n_2^{{-i \tau}}S(\ell \overline{\gamma}, \mp n_2;p_1d\ell /n_1 )    \Big|^2  \mathrm{d}\tau.
\end{align*}
In the last inequality, we applied Cauchy inequality to the $\tau$-integral and the $n_1$-sum and used the bound $$ \int_{-K}^K |\widetilde{W}(\tau,y)|^2 \mathrm{d} \tau \ll 1.$$
from \eqref{second der}. 
Let $e=(p_1d,n_1)$. Write $e=e_1e_2,$ with $e_1|p_1,$  $e_2|d,$ $(e_1,e_2)=1$.  By change of variable, $d \rightarrow e_2d$, $p_1 \rightarrow e_1p_1$ and $n_1 \rightarrow en_1$, we see that (after dividing the sum over $n_1$ dyadically: $n_1 \sim N_1$ )
\begin{align}\label{n1 dyadic}
     \sum_{p_1| M_2^{1+b_2}} G_{p_1}^0(\ell, A,C,N) &\ll   \sum_{p_1| M_2^{1+b_2}} \mathop{\sum_{\substack {e_1e_2 \ll N_1 \\ e_1\mid M_2^{1+b_2} }   }} \mathop{\sumstar_{ \substack{d \sim \frac{C}{A M_2^{b_2} e_2}
       } } }  \sum_{\substack{ \gamma \shortmod{edp_1} \\ (\gamma, edp_1)=1 }   }  \frac{ {N} K}{ep_1d  {\ell}} \mathop{\sum_{ \substack{n_1|\ell \\ (n_1,p_1d)=1 \\ 
       en_1 \asymp N_1} }} \notag \\
     &  \times \int_{-K}^K \Big|   \sum_{  n_2 \leq N_0/n_1^2 } \frac{\overline{\lambda_\pi(en_1,n_2)}}{ \sqrt{n_2}  } n_2^{{-i \tau}}S(\ell \overline{\gamma}, \mp n_2;p_1d\ell /n_1 )    \Big|^2  \mathrm{d}\tau.
\end{align}
Write  $\ell/n_1=rs, $ $r \mid (p_1d)^\infty$ and $(s,p_1d)=1$. Thus 
\begin{align}\label{Gp3}
       \sum_{p_1| M_2^{1+b_2}} G_{p_1}^0(\ell, A,C,N)  &\ll   \sum_{p_1| M_2^{1+b_2}}\mathop{\sum_{\substack {e_1e_2 \ll N_1 \\ e_1\mid M_2^{1+b_2} }   }}  \mathop{\sumstar_{ \substack{d \sim \frac{C}{A M_2^{b_2} e_2}
       } } }    \frac{ {N} K}{ep_1d  {\ell}} \mathop{\sum_{ \substack{n_1rs= \ell \\ (n_1s,p_1d)=1 \\ r | (p_1d)^\infty \\ en_1 \asymp N_1 }}} \sum_{\substack{ \gamma_1 \shortmod{e} }   }  \int_{-K}^K  \mathcal{L}_1 \,  \mathrm{d} \tau \notag \\
        &=\sum_{p_1| M_2^{1+b_2}} \mathop{\sum_{\substack {e_1e_2 \ll N_1 \\ e_1\mid M_2^{1+b_2} }   }}  \mathop{\sumstar_{ \substack{d \sim \frac{C}{A M_2^{b_2} e_2}
       } } }   \frac{ {N} K}{p_1d  {\ell}} \mathop{\sum_{ \substack{n_1rs= \ell \\ (n_1s,p_1d)=1 \\ r | (p_1d)^\infty \\ en_1 \asymp N_1  }}}    \int_{-K}^K  \mathcal{L}_1 \,  \mathrm{d} \tau,
\end{align}
where 
\begin{align*}
    \mathcal{L}_1= \sum_{\substack{ \gamma \shortmod{dp_1} \\ (\gamma, dp_1)=1 }   } \Big|   \sum_{n_2 \leq N_0/(en_1)^2}\frac{\overline{\lambda_\pi(en_1,n_2)}}{ \sqrt{n_2}  } n_2^{{-i \tau}}S(rsn_1 \overline{\gamma}, \mp n_2;p_1drs)    \Big|^2.
\end{align*}

\begin{lemma}\label{kloosterman simple}
   For any sequence $\alpha(n)$ of complex numbers, we have 
   \[   \mathcal{L}= \sum_{\substack{ \gamma \shortmod{dp_1} \\ (\gamma, dp_1)=1 }   } \Big|   \sum_{n \leq N} \alpha(n) S(rsn_1 \overline{\gamma}, \mp n;p_1drs)    \Big|^2 \ll  p_1dr^2s \sum_{\substack{ h \shortmod{dp_1 s} \\ (h, dp_1s)=1 }   } \Big|   \sum_{n \leq N/r } \alpha(rn) e\left( \frac{hn}{p_1ds}\right)     \Big|^2. \]
\end{lemma}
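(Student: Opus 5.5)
We would prove the lemma by factoring the Kloosterman sum with the Chinese remainder theorem and then opening the $s$-part and the $r$-part explicitly. Here $(s,p_1d)=1$ and $r\mid (p_1d)^\infty$. Write $q=p_1dr$, so that the modulus is $qs$ with $(q,s)=1$. Twisted multiplicativity gives
\[
S(rsn_1\overline{\gamma},\mp n;qs)=S(rsn_1\overline{\gamma}\,\overline{s},\mp n\overline{s};q)\,S(rsn_1\overline{\gamma}\,\overline{q},\mp n\overline{q};s).
\]
In the second factor the first argument $rsn_1\overline{\gamma q}$ is $\equiv 0 \pmod s$. The Kloosterman sum therefore degenerates to a Ramanujan sum $c_s(n)=\sum_{u\mid (s,n)}\mu(s/u)u$, and this no longer depends on $\gamma$. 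In the first factor the argument $rn_1\overline{\gamma}$ has $r\mid q$. Opening it as $\sum^\star_{x (q)} e\big((rn_1\overline{\gamma}x\mp n\overline{s}\,\overline{x})/q\big)$ and summing over $x$ in residue classes modulo $p_1d$ (write $x=x_0+p_1d\,t$, $t \bmod r$, noting $r\mid(p_1d)^\infty$, so $r$ and $p_1d$ share primes), the $t$-sum forces $r\mid n$ up to the Ramanujan-type cancellation. The only surviving terms are $n=rn'$, and they produce a Kloosterman sum of modulus $p_1d$ of size at most $r$ times it. We can state this more cleanly: since $rn_1\overline{\gamma}x$ depends only on $x$ modulo $p_1d$, summing $e(\mp n\overline{s}\,\overline{x}/q)$ over the lifts of $x_0$ gives $r\,\delta(r\mid n)$ times $e(\mp (n/r)\overline{s}\,\overline{x_0}/(p_1d))$. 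This works because $\overline{x}$ runs over all lifts of $\overline{x_0}$ modulo $q$.

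With this, the inner sum becomes
\[
r\sum^\star_{x_0 (p_1d)} e\Big(\frac{n_1\overline{\gamma}x_0}{p_1d}\Big)\sum_{n\le N/r}\alpha(rn)\,c_s(rn)\,e\Big(\frac{\mp n\overline{s}\,\overline{x_0}}{p_1d}\Big).
\]
Next we expand $c_s(rn)=\sum^\star_{y (s)} e(rny/s)$ and combine it with the modulus-$p_1d$ phase through CRT into one additive character $e(hn/(p_1ds))$, with $h$ running over reduced residues modulo $p_1ds$. The phase $\overline{s}$ and the factor $r$ (which is coprime to $s$) get absorbed into $h$. The inner sum is thus $r\sum^\star_{x_0}e(n_1\overline{\gamma}x_0/(p_1d))\sum_{y}\Sigma(h(x_0,y))$, where $\Sigma(h)=\sum_{n\le N/r}\alpha(rn)e(hn/(p_1ds))$.

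Then we take $|\cdot|^2$ and sum over $\gamma$. We view the $\gamma$-sum together with the phase $e(n_1\overline{\gamma}x_0/(p_1d))$ as a Fourier transform in $x_0$, and $(n_1,p_1d)=1$. Extending $\gamma$ to all classes by positivity and applying Plancherel modulo $p_1d$ gives
\[
\sum_\gamma\Big|\sum_{x_0}e\big(n_1\overline{\gamma}x_0/(p_1d)\big)B(x_0)\Big|^2\le p_1d\sum_{x_0}|B(x_0)|^2,
\]
where $B(x_0)=\sum_y\Sigma(h(x_0,y))$. Cauchy--Schwarz in $y$ then costs a factor $\varphi(s)\le s$. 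Altogether we obtain $r^2\cdot p_1d\cdot s\sum^\star_{h (p_1ds)}|\Sigma(h)|^2$, which is the claimed bound.

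The main obstacle is the bookkeeping in the modulus-$r$ lift. We must check carefully that summing over lifts really produces the exact divisibility $r\mid n$, with no leftover dependence when $r$ shares primes with $p_1d$ to higher powers. If the exact evaluation fails, we would fall back on bounding the lift sum as a Ramanujan sum modulo $r$, which still restricts $n$ to multiples of a large divisor of $r$ at the cost of the same factor $r$.
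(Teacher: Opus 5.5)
Your proof is correct and is essentially the paper's argument: twisted multiplicativity splits off the Ramanujan sum $S(0,n;s)$, the sum over the $r$ lifts modulo $p_1dr$ of a residue modulo $p_1d$ produces $r\,\delta(r\mid n)$, and then come orthogonality in $\gamma$ modulo $p_1d$ (using $(n_1,p_1d)=1$), Cauchy--Schwarz on the Ramanujan sum, and CRT; the only difference is that you collapse the lifts inside each Kloosterman sum before squaring, whereas the paper opens the square and sums over $\gamma$ first. The worry in your last paragraph is unfounded: since $r\mid(p_1d)^\infty$, every lift of a unit modulo $p_1d$ is a unit modulo $p_1dr$, so the exact identity $S(rA,B;p_1dr)=r\,\delta(r\mid B)\,S(A,B/r;p_1d)$ holds and no fallback is needed.
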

\begin{proof}
We follow arguments from \cite{MY11}[P. 22, 11.5]. 
    Using  the multiplicativity relation for Kloosterman sums, we obtain
    \[S(rsn_1 \overline{\gamma}, \mp n;p_1drs)= S(rn_1 \overline{\gamma s}, \mp n;p_1dr) S(0,  n;s). \]
    By the change of variable $n_1 \overline{\gamma s} \rightarrow \gamma$, we  arrive at 
    \begin{align*}
     \sum_{\substack{ \gamma \shortmod{dp_1} \\ (\gamma, dp_1)=1 }   } \Big|   \sum_{n} \alpha_1(n) S(r {\gamma}, \mp n;p_1dr)    \Big|^2, \ \alpha_1(n)=\alpha(n) S(0,n;s)
\end{align*}
We further compute 
\begin{align}\label{cong}
     \sum_{\substack{ \gamma \shortmod{dp_1}  }   } \Big|   \sum_{n} \alpha_1(n) S(r {\gamma}, \mp n;p_1dr)    \Big|^2=p_1d \sum_{n_1,n_2} \alpha_1(n_1) \overline{\alpha_1}(n_2) \mathop{\sumstar_{ \substack{h_1, h_2(\mathrm{mod}\, p_1dr)  \\ h_1 \equiv h_2 (\mathrm{mod}\, p_1d)
       } } } e\left( \frac{\mp n_1 h_1 \pm n_2h_2 }{p_1dr}\right)
\end{align}
We  change of variable $h_i=y+p_1d z_i, $ $i=1,2,$ where $y$ runs modulo $p_1d$ and $z_i$ runs modulo $r$. Since $r |(p_1d)^\infty,$ the condition $(h_i,p_1dr)=1$ is equivalent to $(y,b)=1$. The sum over $z_i$ vanishes unless $r|n_i,$ in which case the sum is $r$. Thus \eqref{cong} equals
\[p_1dr^2\sum_{r |n_1,n_2} \alpha_1(n_1) \overline{\alpha_1}(n_2) \sumstar_{y\, \mathrm{mod}\, p_1d} e \left( \frac{y (\mp\frac{n_1}{r} \pm\frac{n_2}{r})}{p_1d}\right)= p_1dr^2 \sumstar_{y\, \mathrm{mod}\, p_1d} \Big| \sum_{r|n} \alpha_1(n) e \left( \frac{\mp y\frac{n}{r}}{p_1d} \right)\Big|^2 .  \]
Hence 
\[  \sum_{\substack{ \gamma \shortmod{dp_1} \\ (\gamma, dp_1)=1 }   } \Big|   \sum_{n \leq N} \alpha(n) S(rsn_1 \overline{\gamma}, \mp n;p_1drs)    \Big|^2 \leq p_1dr^2 \sumstar_{y\, \mathrm{mod}\, p_1d} \Big| \sum_{r|n} \alpha(n) S(0,n;s) e \left( \frac{\mp y\frac{n}{r}}{p_1d} \right)\Big|^2. \]
By Cauchy's inequality, for any complex coefficients $b_\ell$, we have 
\[|\sum_\ell b_\ell S(0, \ell;s)|^2\leq s \sumstar_{h \, \mathrm{mod}\, s} |\sum_\ell b_\ell e(\frac{h\ell}{s})|^2. \]
Hence using $S(0,\ell;s)=S(0,\ell/r;s)$ 
\[\mathcal{L} \leq p_1dr^2s  \sumstar_{h \, \mathrm{mod}\, s} \ \sumstar_{y\, \mathrm{mod}\, p_1d}\Big| \sum_{r|n} \alpha(n) e \left( \frac{ h\frac{n}{r}}{s} \right) e \left( \frac{\mp y\frac{n}{r}}{p_1d} \right)\Big|^2   \]
We get the lemma using the Chinese remainder theorem and changing the variables $n \rightarrow nr$.
\end{proof}

On plugging Lemma \ref{kloosterman simple} into \eqref{Gp3} we get 
\begin{align*}
     \sum_{p_1| M_2^{1+b_2}} G_{p_1}^0(\ell, A,C,N )  &\ll  NK  \sum_{p_1| M_2^{1+b_2}} \mathop{\sum_{\substack {e_1e_2 \ll N_1 \\ e_1 \mid M_2^{1+b_2} }   }}\mathop{\sumstar_{ \substack{d \sim \frac{C}{A M_2^{b_2} e_2}
       } } }    \mathop{\sum_{ \substack{n_1rs= \ell \\ (n_1s,p_1d)=1 \\ r | (p_1d)^\infty \\ en_1 \asymp N_1 }}} \frac{1}{n_1}    \int_{-K}^K  \\
       & \times \sum_{\substack{ h \shortmod{dp_1 s} \\ (h, dp_1s)=1 }   } \Big|   \sum_{rn_2 \leq  N_0/(en_1)^2}\frac{\overline{\lambda_\pi(en_1,rn_2)}}{ \sqrt{ n_2}  } n_2^{{-i \tau}} e\left( \frac{hn_2}{p_1ds}\right)     \Big|^2 \mathrm{d} \tau
\end{align*}
Thus 
\begin{align}\label{before large sieve AV}
      \sum_{p_1 | M_2^{1+b_2}}G_{p_1}^0(\ell, A,C,N)  &\ll  NK^2 \sum_{p_1 | M_2^{1+b_2}} \mathop{\sum_{\substack {e_1e_2 \ll N_1  \\ e_1|M_2^{1+b_2}  }   }}  \mathop{\sum_{ \substack{n_1rs= \ell \\ (n_1s,p_1d)=1 \\ r | (p_1d)^\infty \\ en_1 \asymp N_1 }}} \frac{1}{n_1} \mathcal{L}_2 ,
\end{align}
where 
\[ \mathcal{L}_2= \mathop{\sumstar_{ \substack{d \sim \frac{C}{A M_2^{b_2} e_2}
       } } } \sum_{\substack{ h \shortmod{dp_1 s} \\ (h, dp_1s)=1 }   } \Big|   \sum_{rn_2 \leq N_0/(en_1)^2} \frac{\overline{ \lambda_\pi(en_1,rn_2)}}{ \sqrt{ n_2}  } n_2^{{-i \tau_0}} e\left( \frac{hn_2}{p_1ds}\right)     \Big|^2,\]
for some $\tau_0$.

\begin{lemma}\label{small n bound}
For $ n_1^2n_2N/(p_1d)^3 \ell \leq M^\epsilon$,    we have 
    \begin{align*}
        \mathop{\sum}_{\substack{p_1 \mid M_2^{1+b_2} }}  G_{p_1}^{0}(\ell, A,C,N) \ll \frac{NK^2 {\ell^2} C^2 M_2}{A^2    }  \left(1 +\frac{C M_2^2}{ N}\right).
    \end{align*}
\end{lemma}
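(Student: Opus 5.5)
The plan is to start from \eqref{before large sieve AV} and bound $\mathcal{L}_2$ with the large sieve of Lemma \ref{large sieve with level}, taking the level to be $D=p_1 s$. The moduli are $c=d\sim C/(AM_2^{b_2}e_2)$, with $(d,p_1s)=1$; this coprimality comes from $(s,p_1d)=1$ together with $\star$ on $d$. The residue $h$ runs over reduced classes modulo $dp_1s$, which matches the $\sumstar_{\gamma (cD)}$ of Lemma \ref{large sieve with level}. The $n_2$-sum has length $M'\le N_0/(r(en_1)^2)$. It should be split dyadically, or handled with the shift in the proof of that lemma, which already covers intervals $[N,N+M)$. The outcome is
\[
\mathcal{L}_2\ll M^\epsilon\Bigl(\frac{N_0}{r(en_1)^2}+p_1s\Bigl(\frac{C}{AM_2^{b_2}e_2}\Bigr)^2\Bigr)\sum_{n_2\le N_0/r(en_1)^2}\frac{|\lambda_\pi(en_1,rn_2)|^2}{n_2}.
\]

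Next I will estimate the coefficient norm. By the Rankin--Selberg bound $\sum_{n\le X}|\lambda_\pi(m,n)|^2\ll_\pi (mX)^{\epsilon}m^{2\theta}X$ (or simply $\ll m^{1+\epsilon}X$ from the standard bound for $\lambda_\pi(m,\cdot)$) and partial summation, the norm is $\ll M^\epsilon (en_1r)^{O(1)}$ with modest powers. Since $en_1\le p_1d\ell$ and $r\mid\ell$, I expect these factors to be absorbed into $\ell^2$ together with the $1/n_1$ factor in \eqref{before large sieve AV}. Concretely I would write $\lambda_\pi(en_1,rn_2)$ and use Hecke multiplicativity with $(e,n_2)$-type decompositions, or the crude bound $|\lambda_\pi(a,b)|^2$ averaged, to get $\ll M^\epsilon\ell^{2}$ after summing over the divisor variables $e_1,e_2,n_1,r,s$. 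The divisor sums over $p_1\mid M_2^{1+b_2}$, $e_1$, $n_1rs=\ell$ and the dyadic $N_1$ contribute only $M^\epsilon$.

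Then I will insert the sizes. The diagonal term $N_0/(en_1)^2\le N_0=M^\epsilon M_2^3C^3\ell/(A^3N)$ contributes $NK^2 N_0 = M^\epsilon K^2\ell M_2^3C^3/A^3$. Since $A\ge1$, this is $\le NK^2\ell C^2M_2\cdot CM_2^2/(A^2N)$, giving the second term. The level term contributes $NK^2 p_1 s\, C^2/(A^2M_2^{2b_2}e_2^2)$. Using $p_1\le M_2^{1+b_2}$ and $s\le\ell$, this is $\le NK^2\ell C^2M_2/A^2$, giving the first term. The remaining powers of $\ell$ are covered by $\ell^2$.

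The main obstacle I expect is bookkeeping rather than anything conceptual. One point is verifying the coprimality and mod-structure hypotheses of Lemma \ref{large sieve with level} when the modulus is $dp_1s$ with $d$ varying and $p_1s$ fixed. Another is the condition $M\le N$ in that lemma, which holds after shifting. The third is controlling the $\lambda_\pi(en_1,rn_2)$ norm uniformly in $e$; if the Rankin--Selberg average in the first index is delicate, I will fall back on the trivial factor $(en_1)^{1+\epsilon}$ and absorb it using $e n_1\asymp N_1$ and the dyadic gain $1/(en_1)^2$ in the length $N_0/(en_1)^2$.
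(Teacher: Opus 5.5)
Your route is the paper's: Lemma~\ref{large sieve with level} applied to $\mathcal{L}_2$ with level $D=p_1s$ and moduli $d\sim C/(AM_2^{b_2}e_2)$, which gives the two-term bound \eqref{l2 bound}, followed by inserting the sizes. Your size computations for both terms are right. The gap is in the step you defer as bookkeeping, namely the norm $\sum_{n_2}|\lambda_\pi(en_1,rn_2)|^2/n_2$. Your reason for absorbing its growth into $\ell^2$ (``$en_1\le p_1d\ell$'') does not work. Here $e=e_1e_2$ with $e_2\mid d$ and $e_1\mid M_2^{1+b_2}$, and neither factor is controlled by $\ell$. Your fallback, the gain $(en_1)^{-2}$, appears only in the diagonal term $N_0/(r(en_1)^2)$. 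The level term $p_1s\,C^2/(A^2M_2^{2b_2}e_2^2)$ carries only $e_2^{-2}$.

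Concretely, Hecke relations and Rankin--Selberg give $\sum_{n_2\le Y}|\lambda_\pi(m,rn_2)|^2/n_2\ll M^{\epsilon}\,r\,m^{2\theta}$. Since $\frac{1}{n_1}\cdot sr=\ell/n_1^2$, the level term contributes
\[
M^{\epsilon}\,\frac{NK^2C^2\ell}{A^2M_2^{2b_2}}\sum p_1\,e_1^{2\theta}\,e_2^{2\theta-2}\,n_1^{2\theta-2}
\]
to \eqref{before large sieve AV}. The $e_2$- and $n_1$-sums converge. The problem is $e_1$: if you use only $p_1\le M_2^{1+b_2}$, the terms with $b_2=0$ and $e_1=M_2$ cost an extra factor of size $|\lambda_\pi(M_2,1)|^2$, which is known only to be $\ll M_2^{2\theta}$. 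That factor is neither $M^\epsilon$ nor covered by $\ell^2$. Averaging over the first index cannot help either, because $e_1$ runs over powers of the single prime $M_2$.

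The missing observation is that the substitution $p_1\to e_1p_1$ in \eqref{n1 dyadic} leaves a new $p_1$ with $e_1p_1\mid M_2^{1+b_2}$. Hence $p_1e_1^{2\theta}\le M_2^{1+b_2}$, and the level term is $\ll M^\epsilon NK^2\ell C^2M_2/(A^2M_2^{b_2})$. With this, and the diagonal term handled as you describe, the lemma follows.

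For comparison, the paper does not bound the norm pointwise. It averages over the first index after $en_1\to n_1$, using $\sum_{n_1}\sum_{n_2}|\lambda_\pi(n_1,n_2)|^2/(n_1n_2)\ll M^\epsilon$, and absorbs $e_1$ through $e_1\le N_1/e_2$. The same $e_1=M_2$ term shows that this averaging cannot pay for $e_1$ either: there, $n_1^{-3}$ becomes $e^3/N_1^2$ rather than $e^2/N_1^2$. So the constraint $e_1p_1\mid M_2^{1+b_2}$ is the real input in either version.
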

\begin{proof}
 By  Lemma \eqref{large sieve with level}, we have
\begin{align}\label{l2 bound}
    \mathcal{L}_2 &\ll M^{\epsilon}\left( \frac{p_1sC^2 }{A^2 M_2^{2b_2}e_2^2 } +\frac{N_0}{r(en_1)^2}\right) \sum_{n_2} \frac{{ |\lambda_\pi(en_1,rn_2)|^2}}{{ n_2}  } \notag\\
    &\ll M^{\epsilon}\left( \frac{M_2sC^2 }{A^2 M_2^{b_2}e_2^2 } +\frac{M_2^3C^3 \ell}{ r A^3N(en_1)^2}\right) \sum_{n_2} \frac{{ |\lambda_\pi(en_1,rn_2)|^2}}{{ n_2}  }.
\end{align}
The contribution of  the first term above to \eqref{before large sieve AV} is bounded by 
 \begin{align*}
     &   \sum_{p_1|M_2^{1+b_2}}NK^2 \mathop{\sum_{\substack {e_1e_2 \ll N_1  \\e_1 \mid M_2^{1+b_2} }   }} \mathop{\sum_{\substack {en_1  \asymp N_1 }   }} \sum_{s \leq \ell} \sum_{r \ll \ell/s n_1}   \frac{1}{n_1}  \frac{M_2 C^2s  r}{ A^2M_2^{b_2}e_2^2  } \sum_{rn_2 \leq N_0/(en_1)^2}\frac{ { |\lambda_\pi(en_1,rn_2)|^2 }}{ { rn_2}  }.
\end{align*}
We change the variable $rn_2 \rightarrow n_2$  followed by executing the sum over $r$ to arrive at 
\begin{align*}
    &  \sum_{p_1|M_2^{1+b_2}}NK^2 \mathop{\sum_{\substack {e_1e_2 \ll N_1  \\e_1 \mid M_2^{1+b_2} }   }}  \mathop{\sum_{\substack {en_1  \asymp N_1 }   }} \sum_{s \leq \ell}    \frac{\ell^2}{n_1^3 s} \frac{C^2 M_2 }{A^2  M_2^{ b_2} e_2^2 }  \sum_{n_2 \leq  N_0/(en_1)^2}\frac{ { |\lambda_\pi(en_1,n_2)|^2 }}{ { n_2}  }     \\
     &\ll  \sum_{p_1|M_2^{1+b_2}}NK^2 \mathop{\sum_{\substack {e_1e_2 \ll N_1  \\e_1 \mid M_2^{1+b_2} }   }}  \mathop{\sum_{\substack {en_1  \asymp N_1 }   }}     \frac{\ell^2}{n_1^3 } \frac{C^2 M_2}{A^2  M_2^{ b_2} e_2^2 }  \sum_{n_2 \leq  N_0/(en_1)^2}\frac{ { |\lambda_\pi(en_1,n_2)|^2 }}{ { n_2}  }  .
\end{align*}
We change the variable $en_1 \rightarrow n_1$ to arrive at 
\begin{align*}
      &  \sum_{p_1|M_2^{1+b_2}}NK^2 \mathop{\sum_{\substack {e_1e_2 \ll N_1  \\e_1 \mid M_2^{1+b_2} }   }}       \frac{\ell^2 e^2}{N_1^2} \frac{C^2 M_2}{A^2  M_2^{ b_2} e_2^2 }  \times  \sum_{n_1 }\sum_{n_2 \ll N_0/n_1^2}  \frac{|\lambda_\pi(n_1,n_2)|^2 }{n_1 n_2}\\
      &\ll  \sum_{p_1|M_2^{1+b_2}}NK^2 \mathop{\sum_{\substack {e_1e_2 \ll N_1  \\e_1 \mid M_2^{1+b_2} }   }}      \frac{\ell^2  e_1^2}{N_1^2} \frac{C^2 M_2 }{A^2  M_2^{ b_2}  }  \\
      & \ll \sum_{p_1|M_2^{1+b_2}}NK^2 \mathop{\sum_{\substack {e_2 \ll N_1 }   }}  \sum_{e_1 \ll N_1/e_2}    \frac{\ell^2  e_1^2}{N_1^2} \frac{C^2 M_2}{A^2  M_2^{ b_2}  }  
      \ll  \frac{NK^2 {\ell^2} C^2 M_2}{A^2 M_2^{ b_2}   }
   \end{align*}

We have 
\begin{align*}
   & \frac{M_2^3C^3 \ell}{ r A^3N(en_1)^2}= \frac{M_2sC^2 }{A^2 M_2^{b_2}e_2^2 } \times  \frac{M_2^2 C M_2^{b_2} \ell }{ANe_1^2rsn_1^2 } .
\end{align*}
Note that (using $\ell=n_1rs$)
\[ \frac{M_2^2 C M_2^{b_2} \ell }{ANe_1^2rsn_1^2 }  \ll M^\epsilon \frac{ CM_2^2 M_2^{b_2}}{N} \]
Hence, following the above arguments,  the contribution of  the second  term in \eqref{l2 bound} to \eqref{before large sieve AV} is  bounded by  
$$ \frac{NK^2 {\ell^2} C^2 M_2}{A^2    }   \times \frac{CM_2^2}{N}.$$ The lemma follows. 

\end{proof}

\subsubsection{Large  $n_2$: $ n_1^2n_2N/(p_1d)^3 \ell > M^\epsilon$ } Using 
$n_1^2n_2 \leq N_2 =  M^{\epsilon/2}\frac{(p_1d)^3 \ell K^3 }{ N} $ from \eqref{N0} and  $n_1^2n_2N/(p_1d)^3 \ell > M^\epsilon $, we note that $K \gg M^{\epsilon}$, i.e., $C \ll M^{-\epsilon}N/M$ in this case. We use the representation (from Lemma \ref{GL3oscilation})
\begin{align*}
		G_{\pm}(x) &=  x \int_{0}^{\infty} \frac{V(z/N)}{z^{iy/2}}  e\left(\frac{ z}{MM_2^{b_2}ad} \right) \sum_{j=1}^{K_1} \frac{c_{j}({\pm}) e\left(\pm 3 (xz)^{1/3} \right)}{\left( xz\right)^{j/3}} \, \mathrm{d} z + O \left((xN)^{\frac{-K_1+5}{3}}\right) \\
        &=  \frac{xN}{N^{iy/2}} \int_{0}^{\infty} \frac{V(z)}{z^{iy/2}}  e\left(\frac{ zN }{MM_2^{b_2}ad} \right) \sum_{j=1}^{K_1} \frac{c_{j}({\pm}) e\left(\pm 3 (xzN)^{1/3} \right)}{\left( xzN\right)^{j/3}} \, \mathrm{d} z + O \left( \frac{1}{M^{\epsilon K_1}}\right)
	\end{align*}
    Thus choosing $K_1$ large enough and substituting  it   into \ref{after gl3 voronoi}, we see that 
\begin{align*}
    H(d, \gamma,a,y)=&\frac{p_1d}{ N^{iy/2}}  \sum_{j=1}^{K_1} \sum_{\pm} \sum_{\eta \in \{ \pm\}} c_j(\eta) \sum_{n_1|p_1d\ell} \sum_{n_2 \leq N_2/n_1^2} \frac{\overline{\lambda_\pi(n_1,n_2)}}{n_1n_2} \left(\frac{n_1^2 n_2N}{(p_1d)^3 \ell} \right)^{1-j/3} \\
    & \times S(\ell \overline{\gamma}, \mp n_2;p_1d\ell /n_1 )I_{\pm} \left( \frac{n_1^2n_2}{p_1^3d^3 \ell},y\right) +O(M^{- 2025}),
\end{align*}
where 
\begin{align}
    I_{\pm} \left( x,y\right)=\int_{0}^{\infty} {V(z )}{^{}}  e\left(\frac{ z N}{MM_2^{b_2}ad} \right)  \frac{  e\left(\pm 3 (xzN)^{1/3} \right)}{ z^{iy/2+j/3}} \, \mathrm{d} z.
\end{align}
On plugging this into \eqref{pre voronoi} and applying Cauchy's inequality to the sum over $j$, we have 
\begin{align}
     G_{p_1}^0(\ell, A,C,N) \ll   \sum_j    G_{p_1}^0(j),
\end{align}
where 
\begin{align}
       G_{p_1}^0(j)=&
      \mathop{\sumstar_{ \substack{d \sim \frac{C}{A M_2^{b_2}}
       } } }   \sumstar_{\substack{ \gamma \shortmod{dp_1} }  } \int_{-K}^K p_1^2d^2 \\ 
       & \times \Big| \sum_{n_1|p_1d\ell} \sum_{n_2 \leq N_2/n_1^2} \frac{\overline{\lambda_\pi(n_1,n_2)}}{n_1n_2} \left(\frac{n_1^2 n_2N}{(p_1d)^3 \ell} \right)^{1-j/3}S(\ell \overline{\gamma}, \mp n_2;p_1d\ell /n_1 )I_{\pm} \left( \frac{n_1^2n_2}{p_1^3d^3 \ell},y\right)  \Big|^2 \mathrm{d}y.
\end{align}
We will  focus on  $ G_{p_1}^0(1)$, as the  analysis for other $  G_{p_1}^0(j)$ is similar and in fact we get better estimates for $j \geq 2$ due to the presence of factor $(xN)^{j/3},$ $x=\frac{n_1^2 n_2}{(p_1d)^3 \ell}$ appearing in the denominator. 
Thus we have 
\begin{align}
       G_{p_1}^0(1) &\ll 
      \mathop{\sumstar_{ \substack{d \sim \frac{C}{A M_2^{b_2}}
       } } }   \sumstar_{\substack{ \gamma \shortmod{dp_1} }  }  \int_{-K}^K \frac{  N^{4/3}}{p_1^2d^2 \ell^{4/3}} \\ 
       & \times \Big| \sum_{n_1|p_1d\ell} n_1^{1/3}\sum_{n_2 \leq N_2/n_1^2} \frac{\overline{\lambda_\pi(n_1,n_2)}}{n_2^{1/3}} S(\ell \overline{\gamma}, \mp n_2;p_1d\ell /n_1 ) I_{\pm} \left( \frac{n_1^2n_2}{p_1^3d^3 \ell},y\right)  \Big|^2 \mathrm{d}y.
\end{align}
Next we follow arguments from Section \ref{small n} to see that 
\begin{align}\label{Gp2}
    \sum_{p_1| M_2^{1+b_2}} G_{p_1}^0(1)  \ll    \sum_{p_1| M_2^{1+b_2}} \mathop{\sum_{\substack {e_1e_2 \ll N_1 \\ e_1\mid M_2^{1+b_2} }   }}  \mathop{\sumstar_{ \substack{d \sim \frac{C}{A M_2^{b_2} e_2}
       } } }   \frac{ {N^{4/3}} }{ep_1^2d^2  {\ell^{4/3}}} \mathop{\sum_{ \substack{n_1rs= \ell \\ (n_1s,p_1d)=1 \\ r | (p_1d)^\infty \\ en_1 \asymp N_1  }}}  (en_1)^{2/3}  \int_{-K}^K  \mathcal{L}_3 \,  \mathrm{d}y,
\end{align}
where 
\begin{align*}
    \mathcal{L}_3= \sum_{\substack{ \gamma \shortmod{dp_1} \\ (\gamma, dp_1)=1 }   } \Big|   \sum_{n_2 \leq N_2/(en_1)^2}\frac{\overline{\lambda_\pi(en_1,n_2)}}{ {n_2^{1/3}}  }  S(rsn_1 \overline{\gamma}, \mp n_2;p_1drs) I_{\pm} \left( \frac{e^2n_1^2n_2}{e^3p_1^3d^3 \ell},y\right) \Big|^2.
\end{align*}
We apply Lemma \ref{kloosterman simple} to $ \mathcal{L}_3$ and arrive at 
\begin{align*}
    \mathcal{L}_3 \leq  p_1dr^2s \sum_{\substack{ \gamma \shortmod{dp_1s} \\ (\gamma, dp_1s)=1 }   } \Big| \sum_{rn_2 \leq  N_2/(en_1)^2}  \frac{\overline{\lambda_\pi(en_1,rn_2)}}{ {(rn_2)^{1/3}}  }  e\left( \frac{hn_2}{p_1ds}\right)   I_{\pm} \left( \frac{e^2n_1^2rn_2}{e^3p_1^3d^3 \ell},y\right)  \Big|^2.
\end{align*}
Thus 
\begin{align}\label{l4}
    \sum_{p_1| M_2^{1+b_2}} G_{p_1}^0(1)  \ll    \sum_{p_1| M_2^{1+b_2}} \frac{A M_2^{b_2} }{p_1 C}\mathop{\sum_{\substack {e_1e_2 \ll N_1 \\ e_1\mid M_2^{1+b_2} }   }}     \frac{ {N^{4/3}}  }{  {e_1 \ell^{4/3}}} \mathop{\sum_{ \substack{n_1rs= \ell \\ (n_1s,p_1d)=1 \\ r | (p_1d)^\infty \\ en_1 \asymp N_1  }}} r^2s (en_1)^{2/3}    \mathcal{L}_4,
\end{align}
where 
\begin{align*}
    \mathcal{L}_4 =\int_{-K}^KW(y/K) \mathop{\sumstar_{ \substack{d \sim \frac{C}{A M_2^{b_2} e_2}
       } } }\sum_{\substack{ \gamma \shortmod{dp_1s} \\ (\gamma, dp_1s)=1 }   } \Big| \sum_{rn_2 \leq   N_2/(en_1)^2}  \frac{\overline{\lambda_\pi(en_1,rn_2)}}{ {(rn_2)^{1/3}}  }  e\left( \frac{hn_2}{p_1ds}\right)  I_{\pm} \left( \frac{e^2n_1^2rn_2}{e^3p_1^3d^3 \ell},y\right)   \Big|^2 \mathrm{d}z.
\end{align*}
Here $W$ is an appropriate bump function. 
Recall that 
\begin{align}
    I_{\pm} \left( \frac{e^2n_1^2rn_2}{e^3p_1^3d^3 \ell},y \right)=\int_{0}^{\infty}  e\left(\frac{ z N}{MM_2^{b_2}ae_2d} \right)  \frac{V(z)}{ z^{iy/2+1/3}}  e\left(\pm 3 \frac{(e^2n_1^2rn_2 Nz)^{1/3} }{  ep_1d \ell^{{1/3}}}\right)  \, \mathrm{d} z.
\end{align}
  Opening the absolute valued square in $ \mathcal{L}_4 $
  we get 
  \begin{align*}
        \mathcal{L}_4 = \mathop{\sumstar_{ \substack{d \sim \frac{C}{A M_2^{b_2} e_2}
       } } }\sum_{\substack{ \gamma \shortmod{dp_1s} \\ (\gamma, dp_1s)=1 }   }  \sum_{n_2} \sum_{n_2^\prime} \frac{\overline{\lambda_\pi(en_1,rn_2)}}{ {(rn_2)^{1/3}}  }\frac{{\lambda_\pi(en_1,rn_2^\prime)}}{ {(rn_2^\prime)^{1/3}}  }  e\left( \frac{h(n_2-n_2^\prime)}{p_1ds}\right) \mathcal{J}(\cdots),   \end{align*}
  where 
\begin{align*}
   \mathcal{J}(\cdots)=&\int_{0}^{\infty} W(y/K) \int_{0}^{\infty} {V(z_1)(z_2/z_1)^{1/3} }\frac{V(z_2)}{(z_1/z_2)^{i3y/2}}  e\left(\frac{ (z_1-z_2{}) N}{MM_2^{b_2}ae_2d} \right)  \\
   & \times e\left(\pm 3 \frac{(e^2n_1^2r N)^{1/3}((n_2z_1)^{1/3}-(n_2^\prime z_2)^{1/3}) }{  ep_1d \ell^{{1/3}}}\right)  \, \mathrm{d} z_1 \,  \mathrm{d} z_2 \,  \mathrm{d} y.
\end{align*}
 On applying integration by parts to the $y$-integral, we get that  $|z_1-z_2| \leq M^\epsilon/K$ (remaining contribution being $O(M^{- 2025})$). On writing $z_1=z_2+u,$ $|u| \leq M^{\epsilon}/K,$ we arrive at 
\begin{align*}
   \int_{0}^{\infty} W(y/K) &\int_{0}^{\infty} \frac{V(z_2+u)}{ (1+u/z_2)^{1/3} } \frac{V(z_2)}{(1+u/z_2)^{i3y/2}}  e\left(\frac{ u N}{MM_2^{b_2}ae_2d} \right)   \\
  &\times  e\left(\pm 3 \frac{(e^2n_1^2r N)^{1/3}(n_2^{1/3}(z_2+u)^{1/3}-(n_2^\prime z_2)^{1/3}) }{  ep_1d \ell^{{1/3}}}\right)  \, \mathrm{d} u \, \mathrm{d} z_2  \,\mathrm{d} y.
\end{align*}
Using the binomial expansion 
\[ (1+u/z_2)^{1/3}=1+\frac{u}{3 z_2}+\cdots,  \]
we can write 
\[e\left(\pm 3 \frac{(e^2n_1^2r N)^{1/3}(n_2^{1/3}(z_2+u)^{1/3} }{  ep_1d \ell^{{1/3}}}\right) = e\left(\pm 3 \frac{(e^2n_1^2r N)^{1/3}n_2^{1/3}z_2^{1/3} }{  ep_1d \ell^{{1/3}}}\right)e(g(n_2,z_2,u,d)),  \]
 where $g$ is a flat-function, i.e., $n_2^j\frac{d^j}{dn_2^j}g(n_2,\cdots) \ll M^{\epsilon}$. 
Thus we get  
\[   \mathcal{L}_4 \ll K \int_{|u| \leq \frac{M^\epsilon}{K}}(\mathcal{L}_5 +\mathcal{L}_6  ) \mathrm{d}u,\]
with 
\begin{align}\label{l5}
     \mathcal{L}_5= \int_1^2\mathop{\sumstar_{ \substack{d \sim \frac{C}{A M_2^{b_2} e_2}
       } } }\sum_{\substack{ \gamma \shortmod{dp_1s} \\ (\gamma, dp_1s)=1 }   } | \mathcal{S}_1(\cdots)|^2 \mathrm{d}z_2,
\end{align}
\begin{align*}
        \mathcal{S}_1(\cdots)=\sum_{n_2 \leq  N_2/r(en_1)^2}  \frac{\overline{\lambda_\pi(en_1,rn_2)}}{ {(rn_2)^{1/3}}  }  e\left( \frac{hn_2}{p_1ds}\right) e\left(\pm 3 \frac{(e^2n_1^2r N)^{1/3}n_2^{1/3}z_2^{1/3} }{  ep_1d \ell^{{1/3}}}\right)e(g(\cdots)),
\end{align*}
and
\begin{align*}
     \mathcal{L}_6= \int_1^2\mathop{\sumstar_{ \substack{d \sim \frac{C}{A M_2^{b_2} e_2}
       } } }\sum_{\substack{ \gamma \shortmod{dp_1s} \\ (\gamma, dp_1s)=1 }   } | \mathcal{S}_2(\cdots)|^2 \mathrm{d}z_2,
\end{align*}
\begin{align*}
        \mathcal{S}_2(\cdots)=  \sum_{n_2 \leq  N_2/r(en_1)^2}  \frac{{\lambda_\pi(en_1,rn_2^\prime)}}{ {(rn_2^\prime)^{1/3}}  }  e\left( \frac{-hn_2^\prime}{p_1ds}\right)  e\left(\mp 3 \frac{(e^2n_1^2r N)^{1/3}(n_2^\prime)^{1/3}z_2^{1/3} }{  ep_1d \ell^{{1/3}}}\right).
\end{align*}
\begin{lemma}\label{l4 large sieve}
    We have 
    \[\mathcal{L}_4 \ll \frac{M^\epsilon}{K}  \left(  \frac{p_1N^2s}{M^2 A^2M_2^{2b_2}e_2^2}+  \frac{ e_1  p_1 \ell^{} N^2M_2^{2}} { r(en_1)^{2} M_2^{b_2}A^3 M^3 {}} {  }{ }  \right) \sum_{n_2 \ll   \frac{ N_2}{r (en_1)^2}}     \frac{ {|\lambda_\pi(en_1,rn_2)|^2}}{  {(rn_2)^{2/3}}  }. \]
\end{lemma}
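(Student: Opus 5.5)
We bound $\mathcal{L}_4 \ll K\int_{|u|\le M^\epsilon/K}(\mathcal{L}_5+\mathcal{L}_6)\,\mathrm{d}u$. The $u$-integral has length $M^\epsilon/K$, so it suffices to show
\[
\mathcal{L}_5,\ \mathcal{L}_6 \ll \frac{M^\epsilon}{K^2}\Bigl(\text{bracket}\Bigr)\sum_{n_2}\frac{|\lambda_\pi(en_1,rn_2)|^2}{(rn_2)^{2/3}},
\]
where ``bracket'' is the quantity in the statement. Both terms have the same shape: a sum over moduli $d$, reduced residues $h$ modulo $p_1ds$, and a $z_2$-integral of $|\sum_{n_2} b_{n_2}\, e(h n_2/(p_1ds))\, e(T_d\, \phi(n_2))|^2$. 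Here $\phi(n)=n^{1/3}z_2^{1/3}$ and
\[
T_d = \frac{3(e^2n_1^2rN)^{1/3}}{ep_1d\ell^{1/3}}.
\]
The flat factor $e(g)$ in $\mathcal{S}_1$ can be removed by partial summation, or absorbed into the coefficients after a Mellin/Taylor separation, since $n_2^j\partial^j g\ll M^\epsilon$; this costs only $M^\epsilon$. We treat $\mathcal{L}_6$; $\mathcal{L}_5$ is identical.

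The plan is to apply Lemma~\ref{hybrid large-sieve with level} with $D=p_1s$ and $C$ replaced by $C'=C/(AM_2^{b_2}e_2)$. For this the oscillation $e(\pm T_d (n_2z_2)^{1/3})$ must be recast as $e(t f(n_2))$ with $t$ ranging over an interval. Take $f(x)=x^{1/3}$ and $t = T_d z_2^{1/3}$. Since $d\sim C'$ is essentially fixed in size, substitute $z_2\mapsto t$ for each fixed $d$. Then $t$ ranges over an interval of length $\asymp T:=T_{C'}$, with Jacobian $\mathrm{d}z_2\asymp \mathrm{d}t/T$, and the $t$-range can be extended to a common interval $[-2T,2T]$ for all $d$ by positivity. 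The lemma then gives
\[
\mathcal{L}_6 \ll \frac{M^\epsilon}{T}\bigl(C'^2p_1sT+X\bigr)\sum|b_{n_2}|^2,\qquad X\asymp N'^{2/3},
\]
where $N'$ is the dyadic size of $n_2$ (at most $N_2/(r(en_1)^2)$). The two terms become $C'^2p_1s$ and $N'^{2/3}/T$.

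Next we insert the sizes. Since $C\ll M^{-\epsilon}N/M$ in this case, $K\asymp M^\epsilon N/(CM)$, so $1/K^2\asymp C^2M^2/N^2$ up to $M^\epsilon$. Hence the first term $C'^2p_1s$ matches $\frac{1}{K^2}\cdot\frac{p_1N^2s}{M^2A^2M_2^{2b_2}e_2^2}$, using $C'^2 = C^2/(A^2M_2^{2b_2}e_2^2)$ and multiplying and dividing by $N^2/(C^2M^2)$.

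For the second term we use $N'\le N_2/(r(en_1)^2)$ with $N_2\asymp (p_1d)^3\ell K^3/N$ (for the shifted $d$, $p_1$, including $e$). This gives $N'^{2/3}/T\ll K^2\cdot(\text{powers})$, and we must check it is $\ll \frac{1}{K^2}\cdot\frac{e_1p_1\ell N^2M_2^2}{r(en_1)^2M_2^{b_2}A^3M^3}$. The main obstacle is this bookkeeping: the $K^4$ discrepancy has to be converted back through $K\asymp N/(CM)$, and the factors $p_1\le M_2^{1+b_2}$, $e_1$, $e_2$, $r$ have to be tracked carefully. If the direct bound is off, we would instead choose $Y=\min\{X/T,N'\}$ and bound more crudely using $T\gg$ the stationary size, accepting losses of $M^\epsilon$. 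Finally we sum over $u$, multiply by $K$, and collect the two contributions to obtain the claimed bound.
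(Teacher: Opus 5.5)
\textbf{Verdict: the method matches the paper's, but the reduction is off by a factor of $K$, so the second term cannot be closed as you set it up.}

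Your method is the paper's. You remove $e(g)$ by Mellin inversion, rescale $z_2$ so that the phase becomes $e(t\,n_2^{1/3})$ with $t$ in a $d$-independent interval, and apply Lemma~\ref{hybrid large-sieve with level} with $D=p_1s$ and moduli $d\sim C'=C/(AM_2^{b_2}e_2)$. This gives
\[
\mathcal{L}_5\ll M^\epsilon\bigl(C'^2p_1s+N'^{2/3}/T\bigr)\sum_{n_2}|\lambda_\pi(en_1,rn_2)|^2(rn_2)^{-2/3},
\]
which is exactly the paper's intermediate bound. The paper normalizes $t\in[K/2,2K]$ instead; that differs from yours by a scaling that cancels against the Jacobian.

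\textbf{The reduction.} The $y$-integral contributes $K$ and the $u$-integral contributes $M^\epsilon/K$, so $\mathcal{L}_4\ll M^\epsilon\sup_u(\mathcal{L}_5+\mathcal{L}_6)$. What is needed is therefore $\mathcal{L}_5,\mathcal{L}_6\ll \frac{M^\epsilon}{K}(\text{bracket})\sum$, not $\frac{M^\epsilon}{K^2}(\text{bracket})\sum$. With your stronger target the second term cannot be verified, and this is precisely the step you leave open.

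\textbf{Why the second term fails your target.} Insert $N'\le N_2/(r(en_1)^2)$, $N_2\ll M^\epsilon M_2^3C^3\ell K^3/(A^3N)$ and $T\asymp(e^2n_1^2rN)^{1/3}/(ep_1C'\ell^{1/3})$. This gives
\[
\frac{N'^{2/3}}{T}\ll M^\epsilon\,\frac{e_1p_1M_2^2\,C^3K^2\,\ell}{A^3N\,r(en_1)^2M_2^{b_2}}.
\]
In this range $CK=M^\epsilon(C+N/M)\asymp M^\epsilon N/M$, so $C^3K^2=(CK)^3/K\asymp M^{3\epsilon}N^3/(M^3K)$. The result is exactly $\frac{M^{4\epsilon}}{K}\cdot\frac{e_1p_1\ell N^2M_2^2}{r(en_1)^2M_2^{b_2}A^3M^3}$, one full factor of $K$ short of your target. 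Since $N'$ genuinely reaches $N_2/(r(en_1)^2)$, this loss is intrinsic to the hybrid large sieve bound, not slack in the bookkeeping. Your fallback of re-choosing $Y=\min\{X/T,N'\}$ cannot recover it, because that choice is already made inside the proof of Lemma~\ref{hybrid large-sieve with level}.

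\textbf{The fix.} Correct the reduction. With the target $\frac{M^\epsilon}{K}(\text{bracket})$, the computation above closes the second term. Your first-term estimate $C'^2p_1s\ll\frac{M^{2\epsilon}}{K^2}\cdot\frac{p_1N^2s}{M^2A^2M_2^{2b_2}e_2^2}$ then has room to spare; the paper, using only $C\ll N/M$, gets just $1/K$ there. As written, though, the proposal aims at a bound the hybrid large sieve cannot deliver for the second term, and leaves that term unverified.
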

\begin{proof}
We have 
\[   \mathcal{L}_4 \ll K \int_{|u| \leq \frac{M^\epsilon}{K}}(\mathcal{L}_5 +\mathcal{L}_6  ) \, \mathrm{d}u.\]
We will  analyse  $ \mathcal{L}_5$ (see \eqref{l5}), as the  analysis for   $ \mathcal{L}_6$   is similar and $\mathcal{L}_6 \ll  \mathcal{L}_5$. We   get rid of the factor $e(g(\cdots))$ using  Mellin inversion. To this end, we introduce a smooth dyadic partition of unity to the $n_2$-sum. Thus $ | \mathcal{S}_1(\cdots)|^2$ is bounded by 
\begin{align*}
   & M^\epsilon \sup_{N_3 \ll \frac{N_2}{r (en_1)^2}} 
       \Big| \sum_{n_2 \sim N_3} W(\frac{n_2}{N_3})  \frac{\overline{\lambda_\pi(en_1,rn_2)}}{ {(rn_2)^{1/3}}  }  e\left( \frac{hn_2}{p_1ds}\right) e\left( \frac{ \pm 3(e^2n_1^2r N)^{1/3}n_2^{1/3}z_2^{1/3} }{  ep_1d \ell^{{1/3}}}\right) e(g(\cdots))  \Big|^2,
\end{align*}
 On applying Mellin inversion  to 
   $  W(\frac{n_2}{N_3}) e(g(n_2,\cdots)), $    we get that 
\begin{align*}
     | \mathcal{S}_1(\cdots)|^2 &\ll M^\epsilon \sup_{N_3 \ll \frac{e_1^3N_2}{r (en_1)^2}}  \Big| \sum_{n_2 \sim N_3} \frac{\overline{\lambda_\pi(en_1,rn_2)}}{n_2^{i \xi_0}  {(rn_2)^{1/3}}  }  e\left( \frac{hn_2}{p_1ds}\right) e\left(\pm 3 \frac{(e^2n_1^2r N)^{1/3}n_2^{1/3}z_2^{1/3} }{  ep_1d \ell^{{1/3}}}\right)  \Big|^2,
\end{align*}
 for some $\xi_0 \ll M^{\epsilon}$. By   change of variable $z_2^{1/3} \rightarrow z$ followed by $z \rightarrow z/K$,
we see that 
\begin{align*}
     \mathcal{L}_5 \ll \frac{1}{K} \int_K^{2K}\mathop{\sumstar_{ \substack{d \sim \frac{C}{A M_2^{b_2} e_2}
       } } }\sum_{\substack{ \gamma \shortmod{dp_1} \\ (\gamma, dp_1)=1 }   }  \sup_{N_3 \ll \frac{ N_2}{r (en_1)^2}}  \Big| \mathcal{S}_3  \Big|^2 \mathrm{d}z ,
\end{align*}
where 
\[ \mathcal{S}_3= \sum_{n_2 \sim N_3} \frac{\overline{\lambda_\pi(en_1,rn_2)}}{n_2^{i \xi_0}  {(rn_2)^{1/3}}  }  e\left( \frac{hn_2}{p_1ds}\right) e\left(\pm 3 \frac{(e^2n_1^2r N)^{1/3}n_2^{1/3}z^{} }{ K ep_1d \ell^{{1/3}}}\right).\]
On interchanging integration and summations, we get 
\begin{align*}
     \mathcal{L}_5 \ll \frac{1}{K} \sup_{N_3 \ll \frac{N_2}{r (en_1)^2}} \mathop{\sumstar_{ \substack{d \sim \frac{C}{A M_2^{b_2} e_2}
       } } }\sum_{\substack{ \gamma \shortmod{dp_1s} \\ (\gamma, dp_1s)=1 }   }  \int_K^{2K}  \Big| \mathcal{S}_3  \Big|^2 \mathrm{d}z.
\end{align*}
Let $C_1=\frac{C}{AM_2^{b_2}e_2} $. We change the variable $z \rightarrow  \frac{d}{C_1}z$ to see that 
\begin{align*}
     &\mathop{\sumstar_{ \substack{d \sim {C_1}{ }
       } } }\sum_{\substack{ \gamma \shortmod{dp_1s} \\ (\gamma, dp_1s)=1 }   }  \int_K^{2K}  \Big|  \sum_{n_2 \sim N_3} \frac{\overline{\lambda_\pi(en_1,rn_2)}}{n_2^{i \xi_0}  {(rn_2)^{1/3}}  }  e\left( \frac{hn_2}{p_1ds}\right) e\left(\pm 3 \frac{(e^2n_1^2r N)^{1/3}n_2^{1/3}z^{} }{ K ep_1d \ell^{{1/3}}}\right)  \Big|^2 dz \\
       & \ll   \mathop{\sumstar_{ \substack{d \sim  {C_1}{ }
       } } }\sum_{\substack{ \gamma \shortmod{dp_1s} \\ (\gamma, dp_1s)=1 }   }  \int_{\frac{KC_1}{d} }^{ \frac{2KC_1}{d}}  \Big|  \sum_{n_2 \sim N_3} \frac{\overline{\lambda_\pi(en_1,rn_2)}}{n_2^{i \xi_0}  {(rn_2)^{1/3}}  }  e\left( \frac{hn_2}{p_1ds}\right) e\left(\pm 3 \frac{(e^2n_1^2r N)^{1/3}n_2^{1/3}z^{} }{ K ep_1C_1 \ell^{{1/3}}}\right)  \Big|^2 dz  \\
        & \ll   \mathop{\sumstar_{ \substack{d \sim  {C_1}{ }
       } } }\sum_{\substack{ \gamma \shortmod{dp_1s} \\ (\gamma, dp_1s)=1 }   }  \int_{{K/2}{} }^{ {2K}{}}  \Big|  \sum_{n_2 \sim N_3} \frac{\overline{\lambda_\pi(en_1,rn_2)}}{n_2^{i \xi_0}  {(rn_2)^{1/3}}  }  e\left( \frac{hn_2}{p_1ds}\right) e\left(\pm 3 \frac{(e^2n_1^2r N)^{1/3}n_2^{1/3}z^{} }{ K ep_1C_1 \ell^{{1/3}}}\right)  \Big|^2 dz  \\
       &\ll M^\epsilon ( C_1^2p_1sK+ \frac{ N_3^{2/3} K ep_1C_1 \ell^{1/3} } {(e^2n_1^2rN)^{1/3}}) \sum_{n_2 \sim N_3} \frac{ {|\lambda_\pi(en_1,rn_2)|^2}}{  {(rn_2)^{2/3}}  },
\end{align*}
where in the last inequality we applied Lemma \ref{hybrid large-sieve with level}. We plug in  the bounds $$C \ll M^{\epsilon}\frac{N}{M}, \ p_1 \leq M_2^{1+b_2}, \ K \ll M^\epsilon \frac{N}{CM},\  N_3 \ll \frac{ N_2}{r(en_1)^2} \ll M^\epsilon\frac{ M_2^3 N^3\ell  }{r(en_1)^2 M^3  A^3N},$$ to get the lemma. 
Note that 
\begin{align*}
   & C_1^2p_1sK+ \frac{ N_3^{2/3} K ep_1C_1 \ell^{1/3} } {(e^2n_1^2rN)^{1/3}}  \\
   &\ll \frac{p_1N Cs}{M A^2M_2^{2b_2}e_2^2}+  \frac{  K ep_1C_1 \ell^{1/3} } {(e^2n_1^2rN)^{1/3}} \frac{ N_2^{2/3}}{r^{2/3}(en_1)^{4/3}} \\
    & \ll M^\epsilon\frac{p_1N^2}{M^2 A^2M_2^{2b_2}e_2^2}+  \frac{   ep_1 \ell^{1/3} N} {M AM_2^{b_2}e_2(e^2n_1^2rN)^{1/3}} \frac{\ell^{2/3}M_2^2N^{4/3} }{r^{2/3}(en_1)^{4/3} M^2A^2} \\
      & \ll M^\epsilon \frac{p_1N^2s}{M^2 A^2M_2^{2b_2}e_2^2}+  \frac{   p_1 \ell^{} N^2} {M^3 {}} \frac{e_1 M_2^2 }{r(en_1)^{2} M_2^{b_2}A^3} \\
       & \ll M^\epsilon \frac{p_1N^2s}{M^2 A^2M_2^{2b_2}e_2^2}+  \frac{   p_1 \ell^{} N^2M_2^{2}} {M^3 {}} \frac{e_1  }{r(en_1)^{2} M_2^{b_2}A^3} .
\end{align*}
Thus 
\begin{align*}
     \mathcal{L}_5 \ll \frac{M^\epsilon}{K}  \left(  \frac{p_1N^2s}{M^2 A^2M_2^{2b_2}e_2^2}+  \frac{   p_1 \ell^{} N^2M_2^{2}} {M^3 {}} \frac{e_1  }{r(en_1)^{2} M_2^{b_2}A^3}  \right) \sum_{n_2 \ll   \frac{N_2}{r (en_1)^2}}     \frac{ {|\lambda_\pi(en_1,rn_2)|^2}}{  {(rn_2)^{2/3}}  }.
\end{align*}
\end{proof}  

\begin{lemma}\label{large n bound}
For $ n_1^2n_2N/(p_1d)^3 \ell > M^\epsilon$,
    we have $C \ll M^{-\epsilon}N/M$ and 
    \begin{align*}
        \mathop{\sum}_{\substack{p_1 \mid M_2^{1+b_2} }}  G_{p_1}^{0}(\ell, A,C,N) \ll \frac{N^2  M_2}{A^2 \sqrt{M}}.
    \end{align*}
\end{lemma}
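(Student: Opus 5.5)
The first claim, $C \ll M^{-\epsilon}N/M$, follows from the observation made at the start of the large-$n_2$ subsection. Lemma \ref{voronoi lemma} restricts to $n_1^2n_2N/(p_1d)^3\ell \ll M^{\epsilon/2}K^3$. If this quantity also exceeds $M^\epsilon$, then $K \gg M^{\epsilon/6}$. Since $K=(1+\frac{N}{CM})M^\epsilon$, this forces the range $C\ll M^{-\epsilon}N/M$, where $K\asymp M^\epsilon N/(CM)$. For the bound itself we start from the reduction to $\sum_j G_{p_1}^0(j)$. It suffices to treat $j=1$, since the terms $j\ge 2$ carry extra decay $(xN)^{-(j-1)/3}\le M^{-\epsilon(j-1)/3}$. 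We then insert Lemma \ref{l4 large sieve} into \eqref{l4} and sum over all the auxiliary variables.

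Concretely, \eqref{l4} gives
\[
\sum_{p_1} G_{p_1}^0(1)\ll \sum_{p_1}\frac{AM_2^{b_2}}{p_1C}\sum_{e_1,e_2}\frac{N^{4/3}}{e_1\ell^{4/3}}\sum_{n_1rs=\ell} r^2 s\,(en_1)^{2/3}\,\mathcal{L}_4 .
\]
Here $\mathcal{L}_4$ is bounded by $\frac{M^\epsilon}{K}(T_1+T_2)\Sigma$, where $T_1,T_2$ are the two terms of Lemma \ref{l4 large sieve} and
\[
\Sigma=\sum_{n_2\ll N_2/r(en_1)^2}|\lambda_\pi(en_1,rn_2)|^2(rn_2)^{-2/3}.
\]
We estimate $\Sigma$ with the Rankin--Selberg bound $\sum_{n\le X}|\lambda_\pi(n_1,n)|^2\ll X n_1^{\theta}$, after changing $rn_2\to n_2$ and extending the sum. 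Together with $N_2\ll M^\epsilon M_2^3C^3\ell K^3/(A^3N)\asymp M^\epsilon M_2^3N^2\ell/(A^3M^3)$, this gives
\[
\Sigma\ll (N_2/(en_1)^2)^{1/3}\ll M^\epsilon M_2N^{2/3}\ell^{1/3}/(A M (en_1)^{2/3}).
\]
The factor $(en_1)^{2/3}$ in \eqref{l4} cancels against $(en_1)^{-2/3}$ from $\Sigma$. This cancellation is the reason the $n_1$- and $e$-sums cost only $M^\epsilon$ (they are divisor-bounded because $n_1\mid\ell$ and $e_1\mid M_2^{1+b_2}$).

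Next we insert $1/K\asymp CM/N$ and use $p_1\le M_2^{1+b_2}$. The $T_1$-contribution is then roughly
\[
\frac{AM_2^{b_2}}{C}\cdot N^{4/3}\cdot\frac{CM}{N}\cdot\frac{N^2 s}{M^2A^2M_2^{2b_2}}\cdot\frac{M_2N^{2/3}}{AM}\cdot r^2s\,\ell^{-1}.
\]
After the $M_2^{b_2}$ cancellations this is about $\frac{N^3 M_2}{A^2M^2}$ times powers of $\ell$. Using $N\ell^2\ll M^{3/2+\epsilon}$ we get $N^3/M^2\ll N^2M^{-1/2}$, which yields $N^2M_2/(A^2\sqrt M)$. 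The $\ell$-powers ($r^2s^2/\ell\le \ell$) are absorbed by the same constraint $N\ell^2\ll M^{3/2}$. The $T_2$-contribution is handled the same way. Its extra factor relative to $T_1$ is about $e_1\ell M_2^{2+b_2}/(r(en_1)^2 A M)$. For this we use $M_2\le M_1^{1+\epsilon}$, so $M_2^2\ll M^{1+\epsilon}$, and the factor is $\ll M^\epsilon\ell M_2^{b_2}/A$. The dependence on $b_2$ and $A$ is then controlled using $d\sim C/(AM_2^{b_2})\ge 1$, i.e.\ $AM_2^{b_2}\le C\ll N/M$.

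I expect the main obstacle to be this bookkeeping: the powers of $A$, $M_2^{b_2}$, $\ell$ and $r,s$ must combine to give exactly $N^2M_2/(A^2\sqrt M)$, with no leftover loss. The $T_2$ term is the most delicate, because it is where the hypothesis $M_2\le M_1^{1+\epsilon}$ must enter. If the direct count leaves an excess factor $\ell$, I would retain the saving $r\ge1$, $s\ge1$ in $r^2s/\ell^{4/3}$ more carefully and use $\ell^2\le M^{3/2}/N$ once more, trading it against the $N^3$ versus $N^2$ discrepancy.
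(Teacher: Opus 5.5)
Your route is the paper's: reduce to $j=1$, insert Lemma~\ref{l4 large sieve} into \eqref{l4}, bound the $n_2$-sum by $(N_2/(en_1)^2)^{1/3}$, use $CK\asymp M^{\epsilon}N/M$, and close the first term with $N\ell\le N\ell^2\ll M^{3/2+\epsilon}$. Your $T_1$ computation, $\frac{N^3M_2}{A^2M^2M_2^{b_2}}\cdot\frac{r^2s^2}{\ell e_1e_2^2}$ up to $M^\epsilon$, agrees with the paper. (Your derivation of $C\ll M^{-\epsilon}N/M$ copies the paper's. Like it, it only makes sense modulo $\epsilon$-bookkeeping, since $K\ge M^{\epsilon}$ holds by definition.)

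The step that fails is your treatment of $T_2$. When you summarized the $T_1$-contribution as $\frac{N^3M_2}{A^2M^2}$, you discarded the factor $M_2^{-b_2}$. That factor comes from the $M_2^{b_2}$ in \eqref{l4} against the $M_2^{2b_2}$ in $T_1$. You then proposed to control the $M_2^{b_2}/A$ in $T_2/T_1$ via $AM_2^{b_2}\le C\ll N/M$. But nothing in the expression carries a compensating $1/C$: the $1/C$ in \eqref{l4} has already cancelled against $1/K\asymp CM/N$. So bounding $M_2^{b_2}/A$ this way loses a factor as large as $C$, which can be about $M^{1/2}$ (e.g.\ $b_2=1$, $M_2\approx M^{1/2}$), and the estimate does not close.

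The fix, as in the paper, is to keep $M_2^{-b_2}$ from $T_1$. The exact ratio is $T_2/T_1=\frac{M_2^{2+b_2}\ell}{MAe_1rsn_1^2}$, so $M_2^{b_2}$ cancels exactly, $A^{-1}\le 1$, and $M_2^2\ll M^{1+\epsilon}$ finishes. Your ratio drops a factor $e_2^2/s$, but your final bound $\ll M^\epsilon \ell M_2^{b_2}/A$ for it is still valid. As for the leftover $\ell$, the paper removes it via $\ell=rsn_1$. Your fallback also works once $M_2^{-b_2}$ is retained: the $T_2$-contribution is then $\ll M^\epsilon N^3M_2\ell^2/(A^3M^2)$, and one use of $N\ell^2\ll M^{3/2+\epsilon}$ gives $N^2M_2/(A^2\sqrt{M})$.
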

\begin{proof}
\eqref{N0} and   $n_1^2n_2N/(p_1d)^3 \ell > M^\epsilon $ implies $C \ll M^{-\epsilon}N/M $. 
    On plugging in   Lemma \eqref{l4 large sieve} into \eqref{l4} we see that  $ \sum_{p_1| M_2^{1+b_2}} G_{p_1}^0(1)$ is 
\begin{align}\label{2 terms}
     &\ll    \sum_{p_1| M_2^{1+b_2}} \frac{A M_2^{b_2} }{p_1 C}\mathop{\sum_{\substack {e_1e_2 \ll N_1 \\ e_1\mid M_2^{1+b_2} }   }}     \frac{ {N^{4/3}}  }{  {e_1 \ell^{4/3}}} \mathop{\sum_{ \substack{n_1rs= \ell \\ (n_1s,p_1d)=1 \\ r | (p_1d)^\infty \\ en_1 \asymp N_1  }}} r^2s (en_1)^{2/3}  \\
    & \times \frac{M^\epsilon}{K}  \left(  \frac{p_1N^2s}{M^2 A^2M_2^{2b_2}e_2^2}+  \frac{ e_1  p_1 \ell^{} N^2M_2^{2}} { r(e_2n_1)^{2} M_2^{b_2}A^3 M^3 {}} {  }{ }  \right) \sum_{n_2 \ll   \frac{N_2}{r (en_1)^2}}     \frac{ {|\lambda_\pi(en_1,rn_2)|^2}}{  {(rn_2)^{2/3}}  }.
\end{align}
Consider  the first term 
\begin{align}\label{first}
         &\sum_{p_1| M_2^{1+b_2}} \frac{A M_2^{b_2} }{p_1 C K}\mathop{\sum_{\substack {e_1e_2 \ll N_1 \\ e_1\mid M_2^{1+b_2} }   }}     \frac{ {N^{4/3}}  }{  {e_1 \ell^{4/3}}} \mathop{\sum_{ \substack{n_1rs= \ell \\ (n_1s,p_1d)=1 \\ r | (p_1d)^\infty \\ en_1 \asymp N_1  }}}  
         \frac{r^2s^2 (en_1)^{2/3}  p_1N^2}{M^2 A^2M_2^{2b_2}e_2^2}  \sum_{n_2 \ll   \frac{N_2}{r (en_1)^2}}     \frac{ {|\lambda_\pi(en_1,rn_2)|^2}}{  {(rn_2)^{2/3}}  } \notag \\
         =&  \sum_{p_1| M_2^{1+b_2}} \frac{N^{4/3+2}  }{M^2 AM_2^{b_2} C K}\mathop{\sum_{\substack {e_1e_2 \ll N_1 \\ e_1\mid M_2^{1+b_2} }   }}  \frac{\ell^{2/3}}{e_1e_2^2 }  \mathop{\sum_{ \substack{e|n_1 \asymp N_1   }}}     \frac{  e^{2/3}}{ n_1^{4/3} }   \sum_{rn_2 \leq N_2/(en_1)^2}\frac{ { |\lambda_\pi(en_1,rn_2)|^2 }}{ { (rn_2)^{2/3}}  }  
\end{align}
We change the variable $rn_2 \rightarrow n_2$, $en_1 \rightarrow n_1$,  to arrive at 
\begin{align}
       & \sum_{p_1| M_2^{1+b_2}} \frac{N^{4/3+2}  }{M^2 AM_2^{b_2} C K}\mathop{\sum_{\substack {e_1e_2 \ll N_1 \\ e_1\mid M_2^{1+b_2} }   }}  \frac{\ell^{2/3}}{e_1e_2^2 }  \mathop{\sum_{ \substack{n_1rs= \ell \\ (n_1s,p_1d)=1 \\ r | (p_1d)^\infty \\ e|n_1 \asymp N_1  }}}     \frac{  e^{2}}{ n_1^{4/3} }   \sum_{n_2 \leq N_2/n_1^2} n_2^{1/3}\frac{ { |\lambda_\pi(n_1,n_2)|^2 }}{ { n_2^{}}  }  \\
        & \ll  \sum_{p_1| M_2^{1+b_2}} \frac{N^{4/3+2}  }{M^2 AM_2^{b_2} C K}\mathop{\sum_{\substack {e_1e_2 \ll N_1 \\ e_1\mid M_2^{1+b_2} }   }}  \frac{\ell^{2/3}}{e_1e_2^2 }  \mathop{\sum_{ \substack{n_1rs= \ell \\ (n_1s,p_1d)=1 \\ r | (p_1d)^\infty \\ e|n_1 \asymp N_1  }}}    \frac{  e^{2}N_2^{1/3} }{ e^{} }   \sum_{n_2 \leq N_2/n_1^2}\frac{ { |\lambda_\pi(n_1,n_2)|^2 }}{ { n_1n_2^{}}  }  \\
          & \ll  \sum_{p_1| M_2^{1+b_2}} \frac{N^{4/3+2}  }{M^2 AM_2^{b_2} C K}\mathop{\sum_{\substack {e_1e_2 \ll N_1 \\ e_1\mid M_2^{1+b_2} }   }}  \frac{\ell^{2/3}}{e_1e_2^2 }     {  e^{}N_2^{1/3} }{  }  \\
           & \ll \frac{N^{4/3+2}\ell^{2/3}N_2^{1/3}  }{M^2 AM_2^{b_2} C K} \ll \frac{N^{4/3+2}\ell^{2/3} }{M^2 AM_2^{b_2} C K} \times \frac{M_2 N \ell^{1/3}}{MAN^{1/3}} \ll  M^\epsilon \frac{N^2  M_2}{M_2^{b_2} A^2 \sqrt{M}}.
\end{align}
Next we consider the second term in \eqref{2 terms}.
Note that 
\begin{align*}
      \frac{ e_1  p_1 \ell^{} N^2M_2^{2}} { r(en_1)^{2} M_2^{b_2}A^3 M^3 {}} {  }{ } =  \frac{p_1N^2s}{M^2 A^2M_2^{2b_2}e_2^2} \times \frac{M_2^{2+b_2} \ell }{MAe_1 rs n_1^2  },
\end{align*}
and (as $\ell=rsn_1$)
\begin{align*}
      \frac{M_2^{2+b_2} \ell }{MAe_1 rs n_1^2  } \ll       \frac{M_2^{2+b_2}  }{M   }.
\end{align*}
Thus the contribution of the second term in \eqref{2 terms} to 
 $ \sum_{p_1| M_2^{1+b_2}} G_{p_1}^0(1)$ is 
 \[\ll  M^\epsilon \frac{N^2  M_2}{M_2^{b_2} A^2 \sqrt{M}} \times \frac{M_2^{2+b_2}  }{M   }\ll M^\epsilon \frac{N^2  M_2^3}{ A^2 {M^{3/2}}}  \ll  M^\epsilon \frac{N^2  M_2}{ A^2 {M^{1/2}}}.  \]
 The lemma follows. 
 \end{proof}
\section{Conclusion}
We plug in  estimates from Lemma \ref{easy bound}, \ref{small n bound}, \ref{large n bound} into \eqref{OD after a} to get 
\begin{align}
     \mathrm{OD}_{\pi}^{(2, +)}(\cdots)  &\ll  \sup_{A \ll C}  \sum_{a \sim A} \sum_{b_1,b_2 \ll \log C } G_{}^{b_1,b_2}(\ell, A,C,N) \\
     & \ll M^{\epsilon} C^2\left(N+ \frac{N^2}{M_1^2}\right)+ {NK^2 {\ell^2} C^2 M_2}{    }  \left(1 +\frac{C M_2^2}{ N}\right)+\frac{N^2  M_2}{ \sqrt{M}}. 
\end{align}
Same estimates hold for $ \mathrm{OD}_{\pi}^{(2, -)}(\cdots) $ and $ \mathrm{OD}_{\pi}^{(1, \pm)}(\cdots) $. Thus from \eqref{od after am-gm} we have 
\begin{align*}
    \mathrm{OD}_{\pi}(\ell,C,  N) &\ll  M^{\epsilon}\min\left\{\frac{M_1}{{N}}, \frac{M_1N^2}{C^3M^3}\right\} \left(  C^2\left(N+ \frac{N^2}{M_1^2}\right)+ {NK^2 {\ell^2} C^2 M_2}{    }  \left(1 +\frac{C M_2^2}{ N}\right)+\frac{N^2  M_2}{ \sqrt{M}}\right)  
\end{align*}
We estimate the first term as follows:
\begin{align*}
     &  M^{\epsilon}\min\left\{\frac{M_1}{{N}}, \frac{M_1N^2}{C^3M^3}\right\} C^2(N+N^2/M_1^2) \\
     \ll& M^{\epsilon} \min\left\{\frac{M_1}{{N}}, \frac{M_1N}{C^2M^2}\right\} C^2(N+N^2/M_1^2) \\
    \ll&  M^{\epsilon}\frac{M_1N}{C^2M^2} C^2(N+N^2/M_1^2)   \ll M^{\epsilon}M_1N.
\end{align*}
The second term is 
\begin{align}\label{2nd}
    M^{\epsilon}\min\left\{\frac{M_1}{{N}}, \frac{M_1N^2}{C^3M^3}\right\} {NK^2 {\ell^2} C^2 M_2}{    }  \left(1 +\frac{C M_2^2}{ N}\right).
\end{align}
For $C \ll M^{-\epsilon}N/M$, \eqref{2nd} is  bounded by (using $N\ell^2 \ll M^{3/2+\epsilon}$)
$$   M^\epsilon \frac{M_1}{N}N \frac{N^2}{C^2M^2}\ell^2 C^2M_2 \ll M^\epsilon M_1N. $$
For $C \gg M^{-\epsilon}N/M$, we have $K \ll M^\epsilon$. Thus  \eqref{2nd} is  bounded by 
\[ M^\epsilon \frac{M_1N^2}{C^3M^3} N \ell^2C^2M_2+ M^\epsilon \frac{M_1N^2}{C^3M^3}  \ell^2C^3M_2^3  \ll M^{\epsilon}M_1N.  \]
Similarly the third term  
\[ M^{\epsilon}\min\left\{\frac{M_1}{{N}}, \frac{M_1N^2}{C^3M^3}\right\} \frac{N^2M_2}{\sqrt{M}} \ll M^{\epsilon} \frac{M_1}{{N}} \frac{N^2M_2}{\sqrt{M}} \ll M^{\epsilon}M_1N.  \]

On plugging in the above bound in \eqref{S M after peter}, we get 
\begin{align}
    \mathcal{S}_\pi(M) 
     \ll \frac{M_1-1}{2}+\sup_\ell \sup_{N\ell^2 \ll M^{3/2+\epsilon}} \sup_{ C}\frac{|\mathrm{OD}_{\pi}(\ell,C, N)| }{N} \ll M^\epsilon M_1. 
\end{align}
This proves   Theorem \ref{main thm}.

{} 

\end{document}